\definecolor{rltblue}{rgb}{0,0,0.4}
\definecolor{drkgreen}{rgb}{0,0.4,0}
\definecolor{drkred}{rgb}{0.5,0,0}
\newtheorem{thm}{Theorem}[section]
\newtheorem{lemma}[thm]{Lemma}
\newtheorem{theorem}[thm]{Theorem}
\newtheorem{corollary}[thm]{Corollary}
\theoremstyle{definition}
\newtheorem{definition}[thm]{Definition}
\theoremstyle{remark}
\newtheorem{historic}[thm]{Historic Remark}
\theoremstyle{plain}
\newcounter{contenumi}
\def\geqt{\geq_T}
\def\upto{\mathop{\upharpoonright}}
\def\and{\mathrel{\&}}
\def\sminus{\smallsetminus}
\def\isom{\cong}
\def\Si{\Sigma}
\newcommand\rightdate[1]{\footnotetext{  Saved: #1 \\ Compiled: \today}}
\def\A{\mathcal{A}}
\def\B{\mathcal{B}}
\def\C{\mathcal{C}}
\def\om{\omega}
\def\bbar{\bar{b}}
\def\si{\sigma}
\def\b{\beta}
\def\a{\alpha}
\def\b{\beta}
\def\ZZ{{\mathbb Z}}
\def\QQ{{\mathbb Q}}
\def\M{{\mathcal M}}
\def\A{\mathcal A}
\def\X{\mathcal X}
\def\Y{\mathcal Y}
\def\L{\mathcal L}
\def\H{\mathcal H}
\def\xbar{\bar{x}}
\def\cbar{\bar{c}}
\def\abar{\bar{a}}
\def\bbar{\bar{b}}
\def\xbar{\bar{x}}
\def\dbar{\bar{d}}
\def\ctt{{\mathtt c}}
\def\itt{{\mathtt {in}}}
\def\Sic{\Si^\ctt}
\def\Pic{\Pi^\ctt}
\newcommand{\Sico}[1]{\Si^{\ctt,#1}}
\newcommand{\Pico}[1]{\Pi^{\ctt,#1}}
\def\Sii{\Si^\itt}
\def\Pii{\Pi^\itt}
\def\KK{{\mathbb K}}
\def\SS{{\mathbb S}}
\def\RR{{\mathbb R}}
\def\Phihat{\hat{\Phi}}
\def\Lhat{\hat{\L}}
\def\Ahat{\hat{\A}}
\def\Bhat{\hat{\B}}
\def\g{\gamma}
\def\d{\delta}
\def\Si{\Sigma}
\def\X{{\mathcal X}}
\def\S{{\mathcal S}}
\def\om{\omega}
\def\implies{\Rightarrow}
\def\That{\hat{T}}
\def\sihat{\hat{\si}}
\def\Ltil{\tilde{\L}}
\newcommand{\fseq}[1]{2^{\circ #1}}
\title{Classes of structures with no intermediate isomorphism problems}
\author{Antonio Montalb\'an}
\thanks{The author was partially supported by NSF grant DMS-0901169 and the Packard Fellowship.
The author would like to thank Leo Harrington, Julia Knight, Sy Friedman, Russell Miller and Ekaterina Fokina for useful conversations.
This paper was written while the author participated in the Buenos Aires 
Semester in Computability, Complexity and Randomness, 2013.
}
\address{Department of Mathematics\\
University of California, Berkeley\\
 USA}
\begin{document}

\rightdate{September 15, 2013--submitted for publication}
\maketitle

%\tableofcontents

%

\begin{abstract}
We say that a theory $T$ is intermediate under effective reducibility if the isomorphism problems among its computable models is neither hyperarithmetic nor on top under effective reducibility.
We prove that if an infinitary sentence $T$ is uniformly effectively dense, a property we define in the paper, then no extension of it is intermediate, at least when relativized to every oracle on a cone.
As an application we show that no infinitary sentence whose models are all linear orderings is intermediate under effective reducibility relative to every oracle on a cone.
\end{abstract}

%%%%%%%%%%%%%%%%%%%%%%%%%%%%%%%%%%%%%%%%%%%%%%%%%%%%%%%%%%%%%%%%%%%%%%%%%%%%%%%%%%%%%%%%%%%%%%%%%%%%%%%%%%%%%%%%
\section{Introduction}

In this paper, we show a connection between Vaught's conjecture and an intriguing open question about computable structures.
The question we are referring to asks whether every nice theory $T$ (given by a computably infinitary sentence) satisfies what we call the {\em no-intermediate-extension  property}, which essentially means that for every nice extension $\That$ of $T$ (i.e.\  $\That=T\wedge \varphi$ where $\varphi$ is a  computable infinitary sentence), the isomorphism problem among the computable models of $\That$ is either ``simple,''  or as complicated as possible, but is never intermediate. 
By ``simple'' here we mean hyperarithmetic, and by ``as complicated as possible'' we mean universal  among all $\Si^1_1$-equivalences relations on $\om$ under effective reducibility. See Definition \ref{def: no intermediate extension}.
It is already known that if $T$ has this property when relativized to all oracles, then Vaught's conjecture holds among the extensions of $T$ (Becker \cite{Bec}).
The main result of this paper is a partial reversal, showing that the no-intermediate-extension property follows from a strengthening of Vaught's conjecture, which we call the {\em uniform-effective-density property}.

As a bit of evidence that this strengthening is not too strong, we show that the theory of linear orderings has the uniform-effective-density property.
It thus follows that the isomorphism problem among the computable models of any given theory $\That$ extending that of linear orderings, is either hyperarithmetic or as complicated as possible, but never intermediate, at least relative to every oracle on a cone (Theorem \ref{thm: linear no int}). 

As a side result that follows from one of our lemmas, we show that if a nice class of structures is on top under hyperarithmetic reducibility on a cone, then it is already on top under computable reducibility, also on a cone (Theorem \ref{thm: hyp on top}).

Let us now explain all these concepts in more detail and  give some of the background behind them.

\subsection*{The no-intermediate-extension property}
In \cite{FF09}, K.\ Fokina and S.\ Friedman  started to analyze an effective version of the H.\ Friedman and L.\ Stanley \cite{FS89} reducibility among classes of structures.

\begin{definition}
We say that  a class of structures $\KK$ {\em is on top under effective reducibility} if for every $\Si^1_1$ equivalence relation $E$ on $\om$, there is a computable function $f\colon \om\to\om$, mapping numbers to indices for computable structures in $\KK$ such that, for all $i,e\in\om$, 
\[
i\ E\ e \iff\A_{f(i)}\isom\A_{f(e)},
\]
where $\A_n$ is the computable structure coded by the $n$th Turing machine.
\end{definition}

K.\ Fokina, S.\ Friedman, V.\ Harizanov, J.\ Knight, C.\ MaCoy and A.\ Montalb\'an \cite{FFHKMM} proved that the classes of linear orderings, trees, fields, $p$-groups, torsion-free abelian groups, etc.\ are all on top under effective reducibility.
The only examples of classes of structures that we know are not on top under effective reducibility are the ones where the isomorphism problem among computable structures is hyperarithmetic, such as vector spaces, equivalence structures, torsion-free groups of finite rank, etc. 

This behavior is quite different from that of the  Friedman--Stanley reducibility, where they consider all the countable models of a theory (coded by reals), not just the computable ones, and used Borel functions are reducibilities.
There, a class of structures $\KK$ is said to be {\em on top under Borel reducibility} if for every other class of structures $\SS$, axiomatizable by an $L_{\om_1,\om}$-sentence, there is a Borel function mapping structures $\SS$ to structures in $\KK$ preserving isomorphism. 
In that context, no isomorphism problem can be on top among all analytic equivalence relations on the reals.
Friedman and Stanley provided some examples of classes that are on top under Borel reducibility, like linear orderings, trees and fields. 
However, for $p$-groups, which we said they were on top under effective reducibility, Friedman and Stanley showed they are not on top under Borel reducibility, despite the fact that the isomorphism problem is $\Si^1_1$-complete as a subset of $\RR^2$ (which is different than being universal as an equivalence relation).
For torsion-free abelian groups, which we also said they were on top under effective reducibility, it is still open whether they are on top under Borel reducibility, and all that it is known is that their isomorphism problem is $\Si^1_1$-complete as a set of pairs of reals \cite{Hjo02, torsionFree}.

\begin{definition}
Let us call a class of structures, $\KK$, {\em intermediate for effective reducibility} if it is not on top under effective reducibility, but also the isomorphism problem among its computable structures (i.e., the set $\{(i,e)\in\om^2:\A_{i},\A_{e}\in\KK, \A_{i}\isom\A_{e}\}$) in not hyperarithmetic.
\end{definition}

Let us remark that there are natural equivalence relations on $\om$ that are intermediate, as for example the relation of bi-embeddability among computable linear orderings (it is not on top because it has only one non-hyperarithmetic equivalence class, namely the class of $\QQ$.
For more on bi-embeddability of linear orderings see \cite{MonBSL}).
However, we do not know of an example where the equivalence relation is isomorphism on a nice class of structures.

In  \cite{FFHKMM}, they  asked  whether the following statement is true.

\begin{quote} 
No class of structures axiomatizable by a computably infinitary sentence is intermediate under effective reducibility.
\end{quote}

It is often the case in computable-structure theory that the relativized notions behave better than the unrelativized ones, as they avoid ad-hoc counter-examples. 
In this paper, we concentrate on the relativized notions:

\begin{definition}\label{def: no intermediate extension}
An infinitary sentence $T$ is {\em intermediate on a cone} if there exists a $C\in 2^\om$ (the base of the cone), such that relative to every oracle $X\geqt C$, the class of models of $T$ is intermediate for effective reducibility.
We say that $T$ has the {\em no-intermediate-extension property} if no extension $\That$ of $T$ is intermediate on a cone.
By an {\em extension} of $T$ we mean a sentence of the form $T\wedge \varphi$ where $\varphi$ is an infinitary sentence.
\end{definition}

As an application of our results we will show the following theorem.

\begin{theorem} \label{thm: linear no int}(ZFC+PD)
The theory of linear orderings has the no-intermediate-extension property.
\end{theorem}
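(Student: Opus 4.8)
The plan is to derive this from the main result stated above: since the uniform-effective-density property implies the no-intermediate-extension property (relative to every oracle on a cone), it is enough to show that the theory of linear orderings is uniformly effectively dense. Everything nontrivial is in that last step.

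To prove uniform effective density for linear orderings, I would argue roughly as follows. Fix an extension $\That$, a computable infinitary sentence all of whose models are linear orderings, and pass, using Martin's cone theorem (available under PD), to an oracle on a cone relative to which the analysis below is uniform. The structural input is the Hausdorff theory of linear orderings: for a model $L$ one has the iterated finite (Hausdorff) condensations $c_\alpha(L)$, and, following Ash--Knight, the back-and-forth relations $\equiv_\alpha$ on linear orderings are controlled, level by level, by these condensations together with colorings recording the finite ranks of blocks. The key dichotomy --- an effective, localized form of Vaught's conjecture for linear orderings (Steel) --- should read: at each back-and-forth condition $p$ (a finite tuple in a model, together with the finitely much of the condensation structure it pins down), either only countably many $\equiv$-types of models of $\That$ extend $p$, in which case the isomorphism problem localized at $p$ is hyperarithmetic; or the condensation analysis forces at $p$ either a dense ($\QQ$-like) arrangement of sub-blocks, or interval ranks growing without bound, and then there is genuine room to refine $p$ into back-and-forth-incomparable conditions.

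In this latter, splitting case one must produce, uniformly and effectively from $p$ and an index for an arbitrary $\Si^1_1$ equivalence relation $E$ on $\om$, a computable family of models of $\That$, all refining $p$, whose isomorphism relation mirrors $E$. Here I would combine the known coding witnessing that linear orderings are on top under effective reducibility with the observation that that coding --- assembled from finite sums, $\ZZ$-powers, and shuffles of finitely many fixed blocks over a dense order --- is local: it can be inserted into a prescribed dense gap, or spread across prescribed $\ZZ$-power heights, at a place of $L$ that $p$ leaves free, without disturbing the part of $L$ already fixed by $p$ and (precisely because we are in the splitting case) without leaving the class cut out by $\That$. Density is exactly the statement that this can be done above \emph{every} condition at which complexity is present; Theorem \ref{thm: hyp on top}, and the lemmas behind it, then upgrade the hyperarithmetic-level reductions that the condensation bookkeeping naturally yields to genuine computable reductions, on a cone.

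The main obstacle I anticipate is not any single linear-order construction but the bookkeeping that makes the scheme at once uniform, effective, and local: one must check that the ``only countably many types'' alternative really does collapse to a hyperarithmetic isomorphism problem (this is where Steel's analysis, made effective, is essential), that the splitting alternative can always be carried out respecting an arbitrary prior condition $p$ without falsifying $\That$, and that nothing is lost under relativization --- the last point absorbed by Martin's cone theorem, which pushes any residual non-uniformity off a cone. A further subtlety is coordinating the two sources of splitting, since a model of $\That$ may interleave scattered parts of ever-increasing rank with dense parts, so the coding family has to be built coherently across both regimes.
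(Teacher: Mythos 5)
Your top-level reduction is the same as the paper's: show that the theory of linear orderings is uniformly effectively dense, then invoke Theorem~\ref{thm:main}. After that, however, you diverge substantially in method, and I think the divergence introduces a real gap.

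The main problem is a conflation of tasks. Uniform effective density is a purely \emph{structural} splitting statement: given an effectively unbounded $\Pii_\a$ extension $\That$, exhibit a single $\Pii_{f(\a)}$ formula $\psi$ so that both $\That\wedge\psi$ and $\That\wedge\neg\psi$ remain unbounded below $\om_1^{\That}$. You do not need to produce, from a condition $p$ and an index for a $\Si^1_1$ equivalence relation $E$, a computable family of models of $\That$ mirroring $E$ --- that is precisely what the tree-of-structures machinery in Section~3 (Lemma~\ref{le: alpha tree}, Lemma~\ref{le: alpha star tree}, Theorem~\ref{thm: tree to top}) manufactures for you \emph{from} the density witness. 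By aiming directly at the coding, you are re-proving a harder version of Theorem~\ref{thm:main} rather than supplying the input it asks for, and you have not actually said how to obtain the bounded-complexity splitting formula $\psi$. Likewise, Theorem~\ref{thm: hyp on top} plays no role in the proof of Theorem~\ref{thm: linear no int}; the upgrade from hyperarithmetic to computable reductions is handled inside the tree construction via the overspill argument, not by that separate result.

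The second issue is the choice of condensation. You work with iterated finite (Hausdorff) condensations $c_\alpha(L)$ plus colorings, in the style of the Ash--Knight back-and-forth analysis and Steel's proof. This is a plausible tool, but the regime where the nontrivial work happens is computable $\L$ with $SR(\L)\geq\om_1^{CK}$, where $\L$ is very far from scattered and the Hausdorff condensation sequence stabilizes on a nontrivial dense quotient before exhausting the complexity. The paper instead introduces a condensation tailored to this regime: $a\sim_{\om_1^{CK}} b$ iff $SR((a,b)_\L)<\om_1^{CK}$, proves $\L/{\sim_{\om_1^{CK}}}$ is dense (Lemma~\ref{le: quotient dense}, via the Scott-rank sum estimates in Lemmas~\ref{le: SR sum 1}--\ref{le: SR of omega sum}), writes $\L=(1+\A_0+1+\A_1+\cdots)+(\cdots+\B_1+\B_0)$ with each $\A_i,\B_i$ of high Scott rank, and then does a three-case argument, using the type-omitting theorem to manufacture an $\a$-equivalent block of controlled Scott rank and hence a $\Pii_{\a+\om\cdot2}$ splitting formula. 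If you want to pursue the Hausdorff route you would have to explain how it sees past the dense quotient and how it yields the uniform ordinal bound $\b=f(\a)$; as written, the sketch asserts the dichotomy rather than deriving it, and your acknowledged ``further subtlety'' of interleaving the scattered-rank regime with the dense regime is exactly where the paper's Scott-rank condensation gives traction and the Hausdorff condensation does not obviously do so.
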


Because this property is about theories being not intermediate {\em on a cone}, it does not exactly follow from the theorem that if we take a nice extension of the theory of linear orderings, say given by a computably infinitary sentence, then it is not be intermediate for effective reducibility (relative to 0)--but it almost does.
What does follow, however, is that even if there was such an intermediate extension, there cannot be a relativizable proof that it is intermediate.

It was already known that linear orderings satisfy Vaught's conjecture, as proved by Rubin \cite{Rub74} (see also Steel \cite{Ste78}).
In Section \ref{ss: VC LO}, using part of the construction we use for Theorem \ref{thm: linear no int}, we give another proof of that fact.
A connected result worth mentioning is that the extensions of the theory of linear ordering satisfy the Glimm--Effros dichotomy (Gao \cite{Gao01}).

For arbitrary theories, and for one of the implications, we have the following theorem.

\begin{theorem}[\cite{Bec}] \label{thm: becker}
If $T$ has the no-intermediate-extension property, then $T$ {\em satisfies Vaught's conjecture}, in the sense that every extension $\That$ of $T$ has either countably many, or continuum many countable models.
\end{theorem}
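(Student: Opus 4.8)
The plan is to prove the contrapositive: assuming $T$ does not satisfy Vaught's conjecture, I would exhibit an extension $\That$ of $T$ that is intermediate on a cone. By Morley's theorem on the number of models of an $L_{\om_1\om}$ sentence, a failure of Vaught's conjecture for $T$ produces an extension $\That=T\wedge\varphi$ having exactly $\aleph_1$ countable models, with $\aleph_1<2^{\aleph_0}$. Viewing the isomorphism relation $\isom_{\That}$ on (reals coding) countable models of $\That$ as a $\Si^1_1$ equivalence relation on $2^\om$, it then has exactly $\aleph_1$ classes; so by Silver's theorem it cannot be Borel (a Borel --- indeed $\Pi^1_1$ --- equivalence relation with more than $\aleph_0$ classes has a perfect set of pairwise inequivalent points, hence $2^{\aleph_0}$ classes), and by Burgess's theorem it has no perfect set of pairwise non-isomorphic models, so equality on $2^\om$ does not Borel-reduce to $\isom_{\That}$; in particular $\isom_{\That}$ is not a universal $\Si^1_1$ equivalence relation. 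These two facts --- $\isom_{\That}$ is neither Borel nor Borel-universal --- are the descriptive-set-theoretic shadows of the two things I must verify effectively.

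Next I would show $\isom_{\That}$ is not hyperarithmetic on a cone. Since $\isom_{\That}$ is not Borel, a standard boundedness argument shows $\That$ has countable models of Scott rank cofinal in $\om_1$ (if all ranks were below a fixed $\alpha<\om_1$, each model would be axiomatised by a Scott sentence of complexity bounded by $\alpha$, and $\isom_{\That}$ would be Borel). Relativising the classical construction of a ``pseudo-high'' model of a $\Si^1_1$ sentence, for every oracle $X$ computing the real coding $\That$ one obtains an $X$-computable model of $\That$ of Scott rank $\geq\om_1^{CK,X}$; but a $\Si^1_1(X)$ class all of whose $X$-computable members had hyperarithmetic-in-$X$ Scott sentences would have $\Delta^1_1(X)$ isomorphism relation, so the presence of such a model forces $\isom^{c,X}_{\That}$ to fail to be $\Delta^1_1(X)$. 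Since this holds for every $X$ on the cone above the code of $\That$, $\isom_{\That}$ is not hyperarithmetic on a cone.

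Finally I would show $\isom_{\That}$ is not on top on a cone, which is the heart of the matter. Suppose for contradiction that for every $X$ on some cone, $\isom^{c,X}_{\That}$ is universal among $\Si^1_1(X)$ equivalence relations on $\om$. The goal is to assemble these oracle-by-oracle effective reductions into a single Borel reduction of the universal $\Si^1_1$ equivalence relation on $2^\om$ into $\isom_{\That}$, contradicting the first paragraph's observation that $\isom_{\That}$ is not Borel-universal. Concretely, given a target $\Si^1_1$ equivalence relation $F$ on $2^\om$, for each real $r$ one works over the oracle $X_r$ obtained by joining $r$ with the base of the cone, encodes the $F$-class of $r$ into a $\Si^1_1(X_r)$ equivalence relation on $\om$, and applies the reduction available at $X_r$; the map sending $r$ to the resulting code for a model of $\That$ is then a reduction of $F$ to $\isom_{\That}$. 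The hard part is exactly this assembly: the raw ``on a cone'' hypothesis only asserts that a reduction exists at each $X$, and one must extract enough uniformity --- a single Turing functional that works on the whole cone --- for $r\mapsto X_r\mapsto\dots$ to be Borel; this is done by a uniformisation argument on the cone (Martin's cone theorem in its uniform-witness form), which also converts ``not on top on any cone'' into ``not on top on some cone''. Combining the three steps, for every $X$ on a cone the relation $\isom^{c,X}_{\That}$ is neither $\Delta^1_1(X)$ nor on top, i.e.\ $\That$ is intermediate on a cone, so $T$ fails to have the no-intermediate-extension property. $\Box$
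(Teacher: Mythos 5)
The paper only cites this theorem (to Becker and to Knight--Montalb\'an) and gives a one-paragraph sketch of the Knight--Montalb\'an route; let me compare your proposal to that sketch. There are two serious problems with your approach.

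First, you take for $\That$ any extension with exactly $\aleph_1$ models (via Morley). The argument sketched in the paper, and Becker's, does not work from an arbitrary such $\That$: it invokes Steel's theorem (\cite[Theorem 1.5.11]{Ste78}) to pass to a \emph{minimally unbounded} counterexample, and minimality is essential. As the paper explains (and formalizes in Theorem \ref{thm: unique model of HSR}), minimal unboundedness is what guarantees, relative to a suitable oracle, that $\That$ has a \emph{unique} computable model of high Scott rank; it is this uniqueness, not the mere cardinality $\aleph_1$, that one exploits to show the isomorphism problem is not on top. Your proposal never brings minimality into play, so it is trying to prove ``not on top'' for a $\That$ about which you simply do not have enough information.

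Second, and more fundamentally, your argument that $\isom_{\That}$ is not on top is unsound. You propose to assemble the oracle-by-oracle effective reductions (each of which is a map $\om\to\om$ into $X$-computable indices, witnessing reducibility of a $\Si^1_1(X)$ equivalence relation on $\om$) into a single Borel reduction of a $\Si^1_1$ equivalence relation on $2^\om$ into $\isom_{\That}$, and then invoke Burgess to get a contradiction. This assembly does not exist: the step ``encodes the $F$-class of $r$ into a $\Si^1_1(X_r)$ equivalence relation on $\om$'' has no content, since an equivalence relation on $\om$ has only countably many classes while $F$ has continuum many, and there is no reason the structures produced by the unrelated reductions $f_{X_r}$ and $f_{X_{r'}}$ should be isomorphic when $r\,F\,r'$. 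Moreover, the general implication you are relying on---``on top under effective reducibility on a cone implies on top under Borel reducibility''---is explicitly false: the paper itself recalls (following Friedman--Stanley) that $p$-groups are on top under effective reducibility but \emph{not} under Borel reducibility. So if your assembly step were correct it would already contradict a known theorem. The paper's own Lemma \ref{le: hyp to rec} goes in exactly the opposite direction (hyperarithmetic on top $\Rightarrow$ effective on top), which is the direction that is actually true. Your first two paragraphs (the Morley/Silver/Burgess setup, and ``not hyperarithmetic on a cone'' via high Scott rank models) are essentially fine, if somewhat loosely argued, but the heart of the proof---``not on top''---is missing.
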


The result above was first proved by Becker \cite{Bec}.
Knight and Montalb\'an arrived at the same conclusion roughly at the same time via a very different proof.
They use techniques from computable structure theory, while Becker uses techniques from invariant descriptive set theory.
Both proofs show that if $T$ is a minimal counter-example to Vaught's conjecture, then there is an oracle relative to which there is exactly one computable model of $T$ with a non-hyperarithmetic index set.
To prove this, Knight and Montalb\'an show (using \cite[Lemma 3.3]{MonVC}) that there is an oracle relative to which $T$ has exactly one computable model of high Scott rank, and then modify the oracle to get the index set for this structure to be not hyperarithmetic.

\subsection*{Hyperarithmetic reductions}
The natural effectivization to computable models of the Friedman--Stanley reducibility would be to consider hyperarithmetic reductions instead of computable reductions.
We say that a class $\KK$ is {\em on top under hyeprarithmetic reducibility} if every $\Si^1_1$ equivalence relation on $\om$ hyperarithmetically reduces to the isomorphism problem among computable models of $\KK$.
Another unexpected  empirical observation from the results in \cite{FFHKMM} is that every theory which we could prove was on top under hyperarithmetic reducibility was already on top under effective reducibility.
We show here that this should always be the case, at least among nice theories $T$ where relativization should not be an issue.

\begin{theorem}\label{thm: hyp on top}(ZFC+PD)
If an infinitary sentence $T$ is on top under hyperarithmetic reducibility on a cone, then it is already on top under effective reducibility on a cone.
\end{theorem}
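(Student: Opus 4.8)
The plan is to deduce this from the (relativized) theorem of \cite{FFHKMM} that linear orderings are on top under effective reducibility, combined with Martin's cone theorem. Write $\KK$ for the class of models of $T$; for an oracle $Y$, let $E_Y$ be the isomorphism relation on $Y$-computable linear orderings, and $\isom_{\KK,Y}$ the isomorphism problem among $Y$-computable models of $T$. By the relativization of \cite{FFHKMM}, $E_Y$ is a universal $\Si^1_1(Y)$ equivalence relation under $Y$-effective reducibility, with the relevant reductions landing inside the class of linear orderings. Unwinding the definitions, the hypothesis says: there is a base $C$ so that for every $Y\geqt C$, $E_Y$ reduces to $\isom_{\KK,Y}$ by a function that is hyperarithmetic in $Y$. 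The conclusion will follow once I show that for a cone of $Y$, $E_Y$ reduces to $\isom_{\KK,Y}$ by a $Y$-computable function whose range lies inside $\KK$, since then $\isom_{\KK,Y}$ inherits from $E_Y$ the property of being on top for $\Si^1_1(Y)$ under $Y$-effective reducibility.

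First I would handle a single oracle. Fix $Y\geqt C$ and a reduction $g$ of $E_Y$ to $\isom_{\KK,Y}$ with $g\leqt H^Y_a$ for some $a\in\O^Y$; put $W:=H^Y_a$, so that $W\geqt Y$ and $g\leqt W$. The crucial observation is that, since $W$ is hyperarithmetic in $Y$, one has $\Si^1_1(W)=\Si^1_1(Y)$ (the nontrivial inclusion is the standard fact that $\Si^1_1$ relative to a $\Delta^1_1(Y)$ parameter remains $\Si^1_1(Y)$). Hence every $\Si^1_1(W)$ equivalence relation is already a $\Si^1_1(Y)$ equivalence relation, and therefore $W$-effectively reduces to $E_Y$ by universality of $E_Y$ over $\Si^1_1(Y)$ (a $Y$-effective reduction being a fortiori $W$-effective); in particular $E_W$ does. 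Now compose three $W$-computable reductions: $E_W$ to $E_Y$ as just noted; $E_Y$ to $\isom_{\KK,Y}$ via $g$; and $\isom_{\KK,Y}$ to $\isom_{\KK,W}$ by sending a $Y$-index of a structure to a $W$-index of the same structure. The composite is $W$-computable, and its range lies in $\KK$ because the range of $g$ does. Since $E_W$ is universal over $\Si^1_1(W)$, this exhibits $T$ as on top under effective reducibility relative to $W$.

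So far I would have shown: for every $Y\geqt C$ there is some $W\geqt Y$ relative to which $T$ is on top under effective reducibility. The statement ``$T$ is on top under effective reducibility relative to $Z$'' depends only on the Turing degree of $Z$ and is projective, so under PD the associated game is determined and Martin's cone theorem applies: this statement holds either on a cone or off a cone. The second alternative is impossible, for if it failed off the cone above some $D$, then applying the previous paragraph to any $Y\geqt C$ with $Y\geqt D$ would yield $W\geqt D$ relative to which $T$ is on top under effective reducibility, a contradiction. Hence $T$ is on top under effective reducibility on a cone, as claimed.

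I expect the main difficulty to be conceptual rather than computational. The naive idea of simply re-basing the cone at the hyperarithmetic oracle $W$ that computes the reduction appears to fail, because relativizing ``on top'' to $W$ concerns the a priori larger family $\Si^1_1(W)$ of equivalence relations, so a reduction witnessing on-top-ness over $Y$ need not witness it over $W$; the identity $\Si^1_1(W)=\Si^1_1(Y)$ for hyperarithmetic $W\geqt Y$ is precisely what dissolves this obstruction. A secondary point is that the construction only produces the conclusion cofinally in the Turing degrees, which is why Martin's cone theorem, and with it the hypothesis PD, is needed at the end. One should also check that \cite{FFHKMM} relativizes uniformly enough that $E_Y$ really is universal for $\Si^1_1(Y)$ with reductions landing inside linear orderings, but this is routine.
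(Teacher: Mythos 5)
Your argument is correct and its skeleton matches the paper's: reduce the theorem to a cofinality statement and invoke projective Turing determinacy, and establish cofinality by applying the hypothesis to one universal $\Si^1_1(Y)$ equivalence relation, then absorbing the resulting hyperarithmetic-in-$Y$ reduction into a stronger oracle $W$ that is still hyperarithmetic in $Y$. The one genuine difference is local: where the paper (Lemma \ref{le: hyp to rec}) verifies compatibility of the universal relation at $Y$ and at $W=\nabla^\b(Y)$ by an explicit padding of $\Si^0_{\xi\mapsto\xi+1}$-codes, you invoke the more abstract identity $\Si^1_1(W)=\Si^1_1(Y)$ (valid since $W$ is $\Delta^1_1(Y)$ with $W\geqt Y$, whence also $\om_1^W=\om_1^Y$) together with the relativized universality of the linear-ordering isomorphism problem; this is a pleasant conceptual shortcut, at the modest cost of importing the result of \cite{FFHKMM} as a black box rather than working directly with the paper's internally constructed universal relation $\sim^C$ from Lemma \ref{le: sequences on top}.
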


We will prove this theorem at the end of the subsection \ref{ss: Sigma equivalence relations}, as a corollary of Lemma \ref{le: hyp to rec}.
The use of Projective Determinacy ($PD$) is not essential here, and is just to be able to state the theorem saying ``on a cone of Turing degrees,'' instead of ``for co-finally  many Turing degrees''--these two phrases  are equivalent using projective Turing determinacy when the property they are applied to is projective.

%%%%%%%%%
\subsection*{The density property}
Further analyzing the proofs of Theorem \ref{thm: becker}, one can see that the no-intermediate-extension property implies a property that we call the {\em effective-density property}, and is apparently stronger than Vaught's conjecture.
Therefore, if there was going to be a reversal of Theorem \ref{thm: becker}, then the best we can hope for is to prove that these two properties are equivalent, which reminds unknown.
We instead get a reversal from a stronger notion.
Let us now define all these concepts.

\begin{definition}
We say that an $\L_{\om_1,\om}$-sentence $T$ is {\em unbounded} if it has countable models of arbitrary high Scott rank below $\om_1$.
\end{definition}

It is known that an $\L_{\om_1,\om}$-sentence $T$ is bounded if and only if the isomorphism problem among all is countable models is Borel (see \cite[Theorem 12.2.4]{GaoBook}).
It can then be shown that this is also equivalent to having the isomorphism problem among the computable models of $T$ be hyperarithmetic relative to every oracle on a cone.
It thus follows that $T$ has the no-intermediate-extension property if and only if among the extensions $\That$ of $T$, being unbounded is equivalent to being on top under effective reducibility relative to every oracle on a cone.

\begin{definition}
We say that $T$ is {\em minimally unbounded} if it is unbounded, but for every $\L_{\om_1,\om}$-sentence $\varphi$, one of $T\wedge \varphi$ or $T\wedge \neg\varphi$ is bounded.
\end{definition}

It is known (see \cite[Theorem 1.5.11]{Ste78}) that if there is a counter-example to Vaught's conjecture, then there is one that is minimally unbounded.
Such a counterexample is used to build a theory intermediate on a cone in Theorem \ref{thm: becker}.
Let us remark that, as far as we know, minimally unbounded theories do not necessarily have $\aleph_1$ many models, 
and it is unknown whether the existence of a minimally unbounded theory implies the existence of one with $\aleph_1$ models.

A theory which has no minimal extensions is called {\em dense}.
It is unknown whether every unbounded theory is dense.

\subsection*{The effective analogs}
We will need  effective versions of these notions.
Recall that $\om_1^X$ is the least ordinal without an $X$-computable presentation.
When we have an $L_{\om_1,\om}$-formula $\varphi$, we assume we are given a presentation for it, say by a tree describing the structure of the formula.
We can then write $\om_1^\varphi$ for the least ordinal not computable in the real representing $\varphi$.
Or equivalently, $\om_1^\varphi=\min\{\om_1^X: \varphi$ is an $X$-computably infinitary formula $\}$.
For a countable structure $\A$, we let $\om_1^\A=\min\{\om_1^X: X$ computes a copy of $\A\}$, and let $SR(\A)$ be the Scott rank of $\A$ (see subsection \ref{sse: SR} below).

\begin{definition}
We say that an $L_{\om_1,\om}$-sentence $T$ is {\em effectively unbounded} if it has countable models of arbitrary hight Scott rank below $\om_1^{T}$ (i.e.\ for each $\a<\om_1^T$, $T$ has a model of Scott rank at least $\a$).
We say that a structure $\A$ has {\em high Scott rank} if $\om_1^{\A}\leq SR(\A)$. 
\end{definition}

One can show that every satisfiable infinitary sentence $T$ has a countable model $\A$ with $\om_1^\A=\om_1^T$; this follows from Gandy's basis theorem and the fact that  being a model of $T$ is a $\Si^1_1(T)$ property.
We will show in Lemma  \ref{thm: effectively unbounded}  that $T$ is effectively unbounded if and only if it has such a model $\A$ of high Scott rank, that is, satisfying $\om_1^{T}=\om_1^{\A}\leq SR(\A)$.

It is unknown whether being effectively unbounded is different from being unbounded.
This is quite an interesting question.
(See \cite{Sac07} for partial results.)

\begin{definition}
We say that $T$ is {\em effectively minimally unbounded} if it is effectively unbounded, and for every $\L_{\om_1,\om}$-sentence $\varphi$ of quantifier rank less than $\om_1^T$, one of $T\wedge \varphi$ or $T\wedge\neg\varphi$ is bounded below $\om_1^{T}$.
\end{definition}

This is the property that is needed to build a theory that is intermediate for effective reducibility relative to an oracle in Theorem \ref{thm: becker}.
We will show in Theorem \ref{thm: unique model of HSR} that $T$ is effectively minimally unbounded if and only if every oracle $X$ with $\om_1^X=\om_1^T$ computes at most one model of high Scott rank (relative to $X$), and some such $X$ computes at least one.
Considering theories with this property is not really new.
Some time ago, Goncharov and Knight asked whether there existed computably infinitary sentences that have a unique computable model of high Scott rank.
For all the theories researchers have looked at, they have either none, or infinitely many computable model of high Scott rank.

It is unknown whether being effectively minimally unbounded is different from being minimally unbounded

\begin{definition}
We say that  $T$ is {\em effectively dense} if it is unbounded and no extension $\That$ of $T$ is effectively minimally unbounded.
We say that $T$ is {\em uniformly effectively dense} if it is unbounded, for every $\a\in\om_1$ there is a $\b\in\om_1$ such that, for every extension $\That\in\Pii_\a$ of $T$ which is effectively unbounded, there is a $\psi\in\Pii_\b$ witnessing that $\That$ is not effectively minimally unbounded, i.e., such  that both $\That\wedge \psi$ and $\That\wedge \neg\psi$ are unbounded below $\om_1^{\That}$.

Here $\Pii_\a$ refers to the set of infinitary $\Pi_\a$ formulas.
\end{definition}

It is unknown whether being uniformly effectively dense, being effectively dense and being dense are actually different.

We are now ready to state our main theorem.

\begin{theorem}(ZFC+PD) \label{thm:main} 
Let $T$ be an $\L_{\om_1,\om}$-sentence which is uniformly effectively dense.
Then $T$ is on top under effective reducibility, relative to every oracle on a cone. 
\end{theorem}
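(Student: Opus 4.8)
The plan is to show that the class of models of $T$ is on top under \emph{hyperarithmetic} reducibility relative to every oracle on a cone, and then to invoke Theorem~\ref{thm: hyp on top}; hyperarithmetic reductions give extra room in the coding below. So fix an oracle $X$ on a suitable cone and a $\Si^1_1(X)$ equivalence relation $E$; it is enough to produce an $X$-hyperarithmetic reduction of $E$ to the isomorphism relation among $X$-computable models of $T$. Here ``on top $\dots$ on a cone'' is a projective, Turing-invariant property of the oracle, so Turing (Martin) determinacy---the use of $PD$---lets us move freely between that phrasing and ``$\dots$ relative to every $X\geqt Y$'' for a fixed base $Y$ that I will exhibit.

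First I would iterate the uniform-effective-density hypothesis along the full binary tree. Say $T\in\Pii_{\a_0}$ and let $\a\mapsto\b(\a)$ be the function from the definition of uniform effective density; put $\a_{n+1}=\b(\a_n)$ and $\lambda=\sup_n\a_n<\om_1$. Working in $V$, I would recursively define, for $\sigma\in 2^{<\om}$, an extension $\That_\sigma$ of $T$ with $\That_\sigma\in\Pii_{\a_{|\sigma|}}$, starting from $\That_{\langle\rangle}=T$ and maintaining the invariant that $\That_\sigma$ is effectively unbounded: given $\That_\sigma$ effectively unbounded, uniform effective density supplies $\psi_\sigma\in\Pii_{\a_{|\sigma|+1}}$ with both $\That_\sigma\wedge\psi_\sigma$ and $\That_\sigma\wedge\neg\psi_\sigma$ unbounded below $\om_1^{\That_\sigma}$, and I set $\That_{\sigma0}=\That_\sigma\wedge\psi_\sigma$ and $\That_{\sigma1}=\That_\sigma\wedge\neg\psi_\sigma$. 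The uniform bound $\lambda$ keeps every quantifier rank below one fixed countable ordinal; this is what lets the recursion proceed, and---after picking the witnesses $\psi_\sigma$ of least possible complexity and letting the base $Y$ code a well-order of type $\lambda$ together with the whole system $\{\That_\sigma,\psi_\sigma\}$---it keeps $\om_1^{\That_\sigma\oplus X}=\om_1^X$ for all $\sigma$ and all $X\geqt Y$, so that ``unbounded below $\om_1^{\That_\sigma}$'' upgrades to ``effectively unbounded relative to $X$'' and the invariant survives under relativization. Then, for each $\sigma$, Lemma~\ref{thm: effectively unbounded} and the Gandy basis theorem yield---uniformly in $\sigma$---a model $\A_\sigma\models\That_\sigma$ of high Scott rank with $\om_1^{\A_\sigma\oplus X}=\om_1^X$; and whenever $\tau$ extends $\sigma0$ while $\tau'$ extends $\sigma1$, we get $\A_\tau\not\isom\A_{\tau'}$ because one satisfies $\psi_\sigma$ and the other $\neg\psi_\sigma$.

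It remains to code $E$. Since $E$ is $\Si^1_1(X)$, fix an $X$-computable presentation of it by trees in which membership in an $E$-class is witnessed by an infinite branch, coherently enough that $i\mathrel{E}e$ is witnessed by a branch common to the data for $i$ and for $e$. Hyperarithmetically in $X$ and uniformly in $i$, I would then build a structure $\A_{f(i)}$---automatically a model of $T$, since every $\That_\sigma$ extends $T$---that ``runs along'' the tree $\{\That_\sigma\}$, at stage $n$ extending its current node by the $n$-th bit of the branch read off from the presentation, and, in the absence of a branch, settling onto a node of large, node-dependent Scott rank. By the construction of the $\A_\sigma$ and the divergence property, $\A_{f(i)}\isom\A_{f(e)}$ exactly when the two structures traverse the same branch of $\{\That_\sigma\}$, which happens exactly when $i\mathrel{E}e$; and each $\A_{f(i)}$ is an $X$-computable model of $T$ of high Scott rank. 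This is the required reduction for every $X\geqt Y$, so with Theorem~\ref{thm: hyp on top} and Martin's theorem the proof is complete.

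The hard part is this coding step together with the $\om_1$-bookkeeping beneath it. One must turn the purely existential ``$\Si^1_1$'' content of an arbitrary equivalence relation into an honest branch through the tree of theories and then manufacture a genuine $X$-computable structure of high Scott rank realising \emph{precisely} that branch---so that the isomorphism type is determined exactly by the branch, with distinct $E$-classes giving non-isomorphic structures (via the divergence property, and, for singleton classes, via distinct Scott ranks at distinct nodes) and a common $E$-class giving isomorphic ones. What makes this feasible is exactly the \emph{uniformity} in ``uniformly effectively dense'': without the single ordinal bound $\lambda$, the $\That_\sigma$, and the limit theories along branches, could have quantifier rank cofinal in their own $\om_1$, destroying the room needed for ``high Scott rank'' and breaking both the recursion and the existence of the high-Scott-rank models at its nodes. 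Converting the low-over-$X$ models of Lemma~\ref{thm: effectively unbounded} into genuinely $X$-computable ones---or, better, building the final $\A_{f(i)}$ directly as $X$-computable structures by an effective high-Scott-rank construction rather than by gluing precomputed models---is the remaining technical point that the proof must address.
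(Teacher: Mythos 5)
Your high-level idea---iterate the uniform-density hypothesis to get a tree of effectively unbounded theories $\That_\sigma$, pick high-Scott-rank models of each, and use this tree to code the equivalence relation---is the same germ as in the paper. But there is a genuine gap, and it is exactly at the place you flag as ``the hard part.'' You iterate density only along $2^{<\om}$, so your tree of theories has $\om$ levels and branches in $2^{\om}$. A $\Si^1_1$ equivalence relation $E$ on $\om$ cannot, in general, be represented by a map $n\mapsto x_n\in 2^{\om}$ with $n\,E\,e\iff x_n=x_e$ where the bits of $x_n$ are uniformly read off an $X$-computable tree presentation as you describe; the hyperarithmetic approximation of $E$ runs through all of $\om_1^{CK}$ levels, not $\om$ many. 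The paper handles this by replacing $2^{<\om}$ with $\fseq{\a^*}$ (transfinite binary sequences of nonstandard length with only finitely many $1$'s), and by representing $E$ via sequences $\si_n\in\fseq{\H}$ that are uniformly $\Si^0_{\xi\mapsto\xi+1}$, with $n\sim m\iff\si_n\upto\om_1^{CK}=\si_m\upto\om_1^{CK}$ (Lemma~\ref{le: sequences on top}). Your density iteration must therefore continue transfinitely---this is what the iterated bound $g$ and the ``$g$-proper $\a$-tree'' in Lemma~\ref{le: alpha tree} achieve, where the ``finitely many $1$'s'' condition is precisely what lets one recurse on the number of $1$'s even along a transfinite index set.

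The second and sharper gap is the step you defer to the last paragraph: turning a branch (or rather a $\Si^0_{\xi\mapsto\xi+1}$-coded sequence in $\fseq{\H}$) into an honestly $X$-\emph{computable} structure. Your models $\A_\sigma$ from Lemma~\ref{thm: effectively unbounded} and Gandy's basis theorem are only guaranteed to have a copy computable in some $Y$ with $\om_1^Y=\om_1^X$; they are not $X$-computable, and no amount of ``running along the tree at stage $n$'' produces an $X$-computable amalgam, since the $\A_\sigma$ are not coherently nested. The paper needs a substantial piece of machinery here: the Ash--Knight-style Theorem~\ref{thm: app} (from~\cite{MonAsh}), which takes a computable $\eta$-friendly $\eta$-tree of structures together with $\Si^0_{\xi\mapsto\xi+1}$ codes for sequences and uniformly manufactures the corresponding computable copies, followed by an overspill argument (Theorem~\ref{thm: tree to top}) to push from standard $\b<\om_1^{CK}$ to a nonstandard $\b^*$. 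Without these two ingredients---the transfinite $\fseq{\H}$-representation of $\Si^1_1$ equivalence relations, and the $\Si^0_{\xi\mapsto\xi+1}$-to-computable-copy construction---the reduction does not go through, even after invoking Theorem~\ref{thm: hyp on top} for the hyperarithmetic-to-effective upgrade (which is a legitimate simplifying move, but does not by itself supply $X$-computable target structures).
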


So we get that having the no-intermediate-extension property is implied by being uniformly effectively dense, and implies being effectively dense.
If it is equivalent to one of these, or if it is strictly in between is unknown.

Projective determinacy (PD) is used in the proofs of the theorems above a few times in the form of Turing determinacy (due to Martin):
If a projective degree-invariant set $\S\subseteq 2^\om$ is {\em co-final in the Turing degrees} (i.e.\ $\forall Z\exists X\geqt Z\ ( X\in\S)$), then $\S$ contains a cone of Turing degrees (i.e.\ $\exists C\forall X\geqt C\ (X\in \S)$).
We did not calculate the exact amount of Turing determinacy is needed in the proofs, nor did we made an effort to optimize it, although, surely much less than the full power of $PD$ is necessary.
In theorems like \ref{thm: linear no int}, it might not be necessary at all.

%%%%%%%%%%%%%%%%%%%%%%%%%%%%%%%
\subsection{Background}
For background on infinitary formulas and computably infinitary formulas, see \cite[Chapter 6 and 7]{AK00}.
We will use $\Sii_\a$ to denote the set of infinitary $\Si_\a$-formulas, $\Sic_\a$ for the computable infinitary formulas, and $\Sic{X}_\a$ for the $X$-computable infinitary formulas.

\subsubsection{Back-and-forth relations}
For more background on the back-and-forth relation see \cite[Chapter 15]{AK00}.
Given structures $\A$ and $\B$, tuples $\abar\in\A^{<\om}$, $\bbar\in\B^{<\om}$ and an ordinal $\xi$, we say that $(\A,\abar)$ is {\em $\xi$-back-and-forth below} $(\B,\bbar)$, and write $(\A,\abar)\leq_\xi(\B,\bbar)$ if the $\Pii_\xi$-type of $\abar$ in $\A$ is contained in the $\Pii_\xi$-type of $\bbar$ of $\B$.
(We are allowing tuples of different sizes here as in \cite{AK00}, provided $|\abar|\leq |\bbar|$.
We note that $(\A,\abar)\leq_\xi(\B,\bbar) \iff (\A,\abar)\leq_\xi(\B,\bbar\upto |\abar|)$.)
Equivalently, $(\A,\abar)\leq_\xi(\B,\bbar) $ if for every tuple $\dbar\in\B^{<\om}$ and any $\g<\xi$, there exists $\cbar\in\A^{<\om}$ such that $(\A,\abar\cbar)\geq_\g(\B,\bbar\dbar)$.

We now review the notion of $\a$-friendliness (see \cite[Section 15.2]{AK00}), which is an ``effectiveness condition'' on a class of structure.
A computable sequence $\{\B_n:n\in\om\}$ of structures is {\em $\a$-friendly} if given two tuples in two structures $\abar\in\B_n^{<\om}$ and $\bbar\in\B_m^{<\om}$ and given $\xi<\a$, we can effectively decide if $(\B_n,\abar)\leq_\xi(\B_m,\bbar)$ in a c.e.\ way, or, in other words, if the set of quintuples $\{(n,\abar,m,\bbar,\xi): n,n\in \om,\abar\in\B_n^{<\om}, \bbar\in\B_m^{<\om}, \xi<\a, \mbox{ such that } (\B_n,\abar)\leq_\xi(\B_m,\bbar)\}$ is c.e.

\subsubsection{Scott rank}\label{sse: SR}
The Scott rank of a structure $\A$ is a measure of its complexity defined as follows.
For each $\abar\in\A^{<\om}$, let $r_\A(\abar)$ be the least $\a$ such that  whenever $\abar\leq_\a\bbar$ for some $\bbar\in \A^{|\abar|}$, we have that $\abar$ and $\bbar$ are automorphic.
We then let $SR(\A)$, the Scott rank of $\A$, to be the least $\a$ greater than $r_\A(\abar)$ for all tuples $\abar\in\A^{<\om}$.
($SR(\A)$ is denoted by $R(\A)$ in \cite[Section 6.7]{AK00}.)
For every structure $\A$, we have that $SR(\A)\leq\om_1^\A+1$ (Nadel \cite{Nad74}), where $\om_1^\A=\min\{\om_1^X: X$ computes a copy of $\A\}$.
Structures with $\om_1^\A\leq SR(\A)$ are said to have {\em high Scott rank}.

When a structure $\A$ has Scott rank $\a$, we have that each automorphism orbit can be defined by a $\Pii_{<\a}$ formula (see \cite[Proposition 6.9]{AK00}).
The collection of these formulas for the different tuples $\abar$ from $\A$ form what is called a Scott family for $\A$.
Given such formulas, one can then define a {\em Scott sentence} for $\A$, which is a sentence that is true about $\A$ and of no other countable structure.
Such formula can be taken to be $\Pii_{\a+1}$.
Conversely, if a structure $\A$ has a $\Pii_{\a+1}$ Scott sentence, then it must have Scott rank $\leq \a+1$. 
The computable structures of high Scott rank are exactly the ones which do not have computably infinitary Scott sentences.
However, it is still true (a due to Nadel \cite{Nad74}, see also \cite[Theorem 7.3]{Bar75}) that if two computable structures satisfy the same computably infinitary sentences,  then they are isomorphic.

\subsubsection{The Harrison linear ordering}
The {\em Harrison linear ordering} is a computable linear ordering, denoted by $\H$, isomorphic to $\om_1^{CK}+\om_1^{CK}\cdot\QQ$ which has no hyperarithmetic descending sequences \cite{Har68}.
The well-founded initial segment, which, abusing notation we denote by $\om_1^{CK}$,  cannot be $\Si^1_1$.
This allows us to use the following kind of argument, called an {\em overspill argument}:
If $P\subseteq \H$ is $\Si^1_1$ and contains the whole initial segment $\om_1^{CK}$, then it also contains some $\a\in\H\sminus\om_1^{CK}$.
We call  such elements $\a$ {\em non-standard ordinals}.

Since the back-and-forth relations are arithmetically definable from the previous ones, one can always define them for $\a\in\H$ beyond $\om_1^{CK}$.
More precisely, fix a computable structure $\A$, and let $P$ be the set of all $\a\in\H$ such that there exists a sequence $\{R_\b:\b\leq\a\}$ of relations $R_\b\subseteq\A^{<\om}\times\A^{<\om}$ which satisfy the definition of the back-and-forth relations, that is,  for all $\b<\a$, $(\abar,\bbar)\in R_\b\iff \forall\d<\b\forall \dbar\exists \cbar\ ((\bbar\dbar,\abar\cbar)\in R_\d)$.
This set $P\subseteq\H$ is $\Si^1_1$ and contains all of $\om_1^{CK}$, and hence contains also some $\a\in \H\sminus\om_1^{CK}$.
The same way, it also makes sense to talk about the notion of $\a$-friendly sequence of structures for $\a\in \H\sminus\om_1^{CK}$.

We remark that all these notions can be relativized.
We use $\H^X$ to denote the Harrison linear ordering relative to $X$.

%%%%%%%%%%%%%%%%%%%%%%%%%%%%%%%%%%%%%%%%%%%%%%%%%%%%%%%%%%%%%%%%%%%%%%%%%%%%%%%%%%%%%%%%%%%%%%%%%%%%%%%%%%%%%%%%%%%%%%%%%%%%%%%%%%%%%%%%%%%%%%%%%%%%%%%%%%%%%%%%%%%%%%%%%%%%%%%%%%%%%%%%%%%%%%%%%%%%%%%%%%%%%%%%%%%%%%%%%%%%%%%%%%%%%%%%%%%%%%%%%%%%%%%%%%%%%%%%%%%%%%%%%%%%%%%%%%%%%%%%%%%%%%%%%%%%%%%%%%%%%%%%%%%%%%%%%%%%%%%%%%%%%%%%%%
%\section{The known implications}

%%%%%%%%%%%%%%%%%%%%%%%%%%%%%%%%
\section{Models of high Scott rank}

In this section, we quickly prove the results about structures of high Scott rank mentioned in the introduction.
In particular the characterization of effectively unbounded theories, and  effectively minimally unbounded theories, in terms of models of high Scott rank.

\begin{lemma}\label{thm: effectively unbounded}
An infinitary sentence $T$ is effectively unbounded if and only if it has a model $\A$ with $\om_1^T=\om_1^\A\leq SR(\A)$.
\end{lemma}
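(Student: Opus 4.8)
The plan is to prove both directions by relating Scott rank to the back-and-forth hierarchy over the Harrison linear ordering $\H^T$ relative to (a real coding) $T$, and then use an overspill argument. For the forward direction, assume $T$ is effectively unbounded, so for each $\a<\om_1^T$ there is a model of $T$ of Scott rank at least $\a$. Since ``being a model of $T$'' is $\Si^1_1(T)$, I would like to say: the set of $\a\in\H^T$ such that $T$ has a model of Scott rank $\geq\a$ (suitably interpreted via the $\Pii_\a$-theories, which make sense for non-standard $\a\in\H^T\sminus\om_1^T$ by the remarks in the Background section) contains all standard ordinals and is $\Si^1_1(T)$; by overspill it contains some non-standard $\a_0\in\H^T\sminus\om_1^T$. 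Then I would apply Gandy's basis theorem to the $\Si^1_1(T)$ statement ``there is a model $\A$ of $T$ together with a witnessing back-and-forth tower up to $\a_0$ showing $SR(\A)\geq\a_0$'' to extract such an $\A$ with $\om_1^{\A\oplus T}=\om_1^T$, hence $\om_1^\A=\om_1^T$ (using $\om_1^T\leq\om_1^{\A}\leq\om_1^{\A\oplus T}$). Finally, since $\a_0$ is non-standard for $\A$ as well (as $\om_1^\A=\om_1^T$ and $\a_0>\om_1^T$ in $\H^T$), $SR(\A)\geq\a_0>\om_1^\A$, so in fact $\om_1^\A\leq SR(\A)$ and $\A$ has high Scott rank; combined with $\om_1^T=\om_1^\A$ this is exactly what is wanted. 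One subtlety I would need to address: making precise the claim ``$SR(\A)\geq\a$'' for non-standard $\a$, which should be phrased as ``$\A$ has two non-automorphic tuples that are $\a$-back-and-forth equivalent'', and checking this is arithmetical in the back-and-forth tower, hence keeps the whole statement $\Si^1_1$.

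For the converse, suppose $T$ has a model $\A$ with $\om_1^T=\om_1^\A\leq SR(\A)$. By Nadel's bound $SR(\A)\leq\om_1^\A+1$, so $SR(\A)$ is either $\om_1^\A$ or $\om_1^\A+1$; in either case $\A$ witnesses that $T$ has models of Scott rank $\geq\a$ for every $\a<\om_1^A=\om_1^T$ — indeed $\A$ itself has Scott rank $\geq\om_1^T>\a$. Hence $T$ has a model of Scott rank at least $\a$ for each $\a<\om_1^T$, which is precisely the definition of effectively unbounded. This direction is essentially immediate from the definitions plus Nadel's theorem.

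The main obstacle I expect is the forward direction, specifically the bookkeeping around non-standard ordinals: one must verify that the back-and-forth relations $\{R_\b:\b\leq\a_0\}$ can be carried along inside the $\Si^1_1$ matrix (this is exactly the construction recalled at the end of the Background section, so it is available), that ``high Scott rank of the extracted model'' genuinely follows from $SR(\A)\geq\a_0$ with $\a_0$ non-standard rather than just from $SR(\A)\geq$ some fixed standard ordinal, and that Gandy's basis theorem delivers a model $\A$ with $\om_1^{\A}$ not pushed above $\om_1^T$. The comparison $\om_1^T\leq\om_1^\A$ (every copy of a model of $T$ computes a copy of $T$, up to the coding conventions, or more carefully $\om_1^\A\geq\om_1^T$ need not hold literally — rather one gets $\om_1^{\A\oplus T}\geq\om_1^T$, and Gandy gives $\om_1^{\A\oplus T}=\om_1^T$, whence $\om_1^\A\leq\om_1^T$, and the reverse inequality comes from overspill forcing $SR(\A)>\om_1^T$ which by Nadel forces $\om_1^\A\geq\om_1^T$) is the kind of inequality-juggling that needs to be done carefully but presents no conceptual difficulty.
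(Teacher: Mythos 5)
Your right-to-left direction matches the paper's (it is immediate from the definitions). For the left-to-right direction, however, you take a genuinely different route than the paper, and your version has a gap. The paper does not use overspill on $\H^T$ at all; instead, for each $\a<\om_1^T$ it takes the well-known $T$-computably infinitary sentence $S_\a$ (in the language of the structures) with the property that $\A\models S_\a \iff SR(\A)\geq\a$, notes that effective unboundedness makes $T\cup\{S_\a\}$ satisfiable for each such $\a$, applies Barwise compactness to get $T\cup\{S_\a:\a<\om_1^T\}$ satisfiable, and then applies Gandy's basis theorem to the $\Si^1_1(T)$ class of models of this set. The whole point of the sentences $S_\a$ is that ``$\A\models S_\a$'' is arithmetical in $\A,T,\a$, so the $\Si^1_1$-ness needed for Gandy is automatic. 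This neatly sidesteps the issue you run into.

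The gap in your version is precisely the step you flag as a ``subtlety'': you propose to express ``$SR(\A)\geq\a$'' as ``$\A$ has two non-automorphic tuples that are $\a$-back-and-forth equivalent'' and claim this is arithmetical in the back-and-forth tower. It is not: ``$\abar$ and $\bbar$ are non-automorphic'' is a genuine $\Pi^1_1$ condition (there is no automorphism carrying $\abar$ to $\bbar$), and the tower up to $\a_0$ gives you no arithmetical handle on it. Including it makes your overspill set $\Pi^1_1(T)$ (or worse) rather than $\Si^1_1(T)$, so neither $\Si^1_1$-overspill nor Gandy's basis theorem applies as you invoke them. The natural repair is to replace the non-automorphism clause with a purely tower-internal condition, e.g.\ ``$R_\g\neq R_{\g+1}$ for all $\g\leq\a$'' (which by the standard Scott analysis captures $SR(\A)>\g+1$ for each standard $\g$ and is arithmetical in the tower). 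With that change your overspill/Gandy argument does go through, because the extracted model then has $\leq_\g\neq\leq_{\g+1}$ at every standard level, giving $SR(\A)\geq\om_1^T$ directly; but note that the weaker single-pair condition ``$\abar\equiv_{\a_0}\bbar$ and $\abar\not\equiv_{\a_0+1}\bbar$ for a non-standard $\a_0$'' does not suffice either, since the failure of equivalence is certified only at a non-standard level of the tower and tells you nothing about the standard levels. The paper's use of the $S_\a$'s and Barwise compactness is the cleaner way to avoid all of this.
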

\begin{proof}
The right-to-left direction is immediate from the definition of effectively unbounded.
For the left-to-right consider, for each $\a<\om_1^T$, the sentence $S_\a$ such that $\A\models S_\a$ if and only if $SR(\A)\geq\a$.
It is well-known such sentences exist and can be taken to be $T$-computably infinitary.
Since $T$ is effectively unbounded, for every $\a<\om_1^T$, $T\cup\{S_\a\}$ has a model.
By Barwise compactness $T\cup\{S_\a:\a<\om_1^T\}$ has a model.
Any such model $\A$ would satisfy $\om_1^T\leq SR(\A)$.
Since being a model of $T\cup\{S_\a:\a<\om_1^T\}$ is a $\Si^1_1(T)$ property, by Gandy's basis theorem, there is such a model $\A$ with $\om_1^T=\om_1^\A$.
\end{proof}

Furthermore, we can assume that also $\om_1^T=\om_1^{T,\A}$, where $\om_1^{T,\A}=\min\{\om_1^X: X $ computes a presentation of $\A$ and $T$ is an $X$-computably infinitary sentence$\}$.

The Scott sentence of a structure is the one that identifies a structure up to isomorphism, among all countable structures.
If all we want is to identify a structure up to its $\a$-back-and-forth type, a simpler sentence can be used.
Suppose that $\A$ has a computable copy.
Recall that $\A\leq_\a\B$ if and only if the $\Pii_\a$-theory of $\A$ is a subset of the one of $\B$.
However, the assumption that the $\Pic_\a$-theory of $\A$ is a subset of the one of $\B$ is not enough to obtain $\A\leq_\a\B$.
The following lemma gives us a good  approximation.

\begin{lemma}\label{le: computable bf}
Let $\A$ be a computable structure, and $\B$ be any structure.
\begin{enumerate}
\item If $\Sic_{3\cdot\a}$-Th$(\A)\subseteq \Sic_{3\cdot\a}$-Th$(\B)$,  then  $\A\geq_\a\B$.
\item If $\Pic_{3\cdot\a}$-Th$(\A)\subseteq \Pic_{3\cdot\a}$-Th$(\B)$, then  $\A\leq_\a\B$.
\end{enumerate}
\end{lemma}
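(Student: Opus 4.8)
\textbf{Proof proposal for Lemma \ref{le: computable bf}.}

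The plan is to prove the two parts simultaneously by induction on $\a$, using the recursive characterization of the back-and-forth relations: $\A\geq_\a\B$ holds if and only if for every $\dbar\in\B^{<\om}$ and every $\g<\a$ there is $\cbar\in\A^{<\om}$ with $(\A,\cbar)\geq_\g(\B,\dbar)$ (and dually for parts involving $\leq$). Of course the statement as written is about sentences, but to run the induction one must prove the relativized-to-tuples version: if the $\Sic_{3\cdot\a}$-type of $\abar$ in $\A$ is contained in that of $\bbar$ in $\B$, then $(\A,\abar)\geq_\a(\B,\bbar)$, and symmetrically for $\Pic_{3\cdot\a}$-types and $\leq_\a$. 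The factor $3$ is what gives the argument room: passing from level $\a$ to level $\a+1$ (or through a limit) costs a bounded number of alternations of quantifiers when one converts between ``the tuple realizes a type'' and ``there exists a tuple realizing a type,'' and $3$ is the number of computable-infinitary-quantifier blocks that buffer this conversion.

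The base case $\a=0$ is trivial (every pair of tuples of the right lengths is $\leq_0$-comparable, since $\Pi_0$-types are about atomic formulas and the convention only requires $|\abar|\le|\bbar|$; one checks the $\Sic_0$ clause directly against the definition). For the inductive step, suppose (2) holds below $\a$ and we want $(\A,\abar)\le_\a(\B,\bbar)$ from containment of $\Pic_{3\cdot\a}$-types. Fix $\cbar\in\A^{<\om}$ and $\g<\a$; we must find $\dbar\in\B^{<\om}$ with $(\A,\abar\cbar)\le_\g(\B,\bbar\dbar)$. Since $\A$ is computable, the $\Sic_{3\cdot\g}$-type of $\abar\cbar$ in $\A$ is a $\Pic$-definable (indeed c.e.\ in the appropriate sense) set of formulas, so there is a single computable infinitary formula $\theta(\xbar,\ybar)$ — a $\Pic_{3\cdot\g+1}$-style conjunction asserting that $\ybar$ realizes exactly the $\Sic_{3\cdot\g}$-type that $\cbar$ realizes over $\abar$ in $\A$ — such that $\A\models \theta(\abar,\cbar)$ and, for any $\B$ and any $\bbar'\dbar$ with $\B\models\theta(\bbar',\dbar)$, one has $(\A,\abar\cbar)\le_\g(\B,\bbar'\dbar)$ by part (1) of the induction hypothesis applied at level $\g$. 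Then $\exists\ybar\,\theta(\abar,\ybar)$ is a $\Sic_{3\cdot\g+2}\subseteq\Sic_{3\cdot\a}\subseteq\Pic_{3\cdot\a}$-consequence of the $\Pic_{3\cdot\a}$-type of... — here one has to be slightly careful about which side the formula sits on, which is exactly why the statement pairs $\Sic$ with $\ge$ and $\Pic$ with $\le$ — so the hypothesis forces $\B\models\exists\ybar\,\theta(\bbar,\ybar)$, giving the required $\dbar$. The symmetric argument handles part (1), interchanging the roles of $\Sic$/$\Pic$ and $\ge$/$\le$, and limit stages go through because $\Sic_{3\cdot\lambda}$-Th is the union of the $\Sic_{3\cdot\a}$-Th for $\a<\lambda$ and $\le_\lambda$ is determined by the $\le_\a$ for $\a<\lambda$.

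The main obstacle, and the only place real care is needed, is bookkeeping the index arithmetic: one must verify that the formula $\theta$ produced at stage $\g$ really lands inside $\Sic_{3\cdot\g+c}$ for a small constant $c\le 3$, so that $\exists\ybar\,\theta$ stays within $\Sic_{3\cdot(\g+1)}\subseteq\Sic_{3\cdot\a}$, and that the use of computability of $\A$ (to get a single formula rather than an infinite type) is legitimate — this is where the hypothesis ``$\A$ is computable'' is essential, since for a non-computable $\A$ the type need not be captured by one computable infinitary formula. I would also double-check the degenerate length conventions ($|\abar|\le|\bbar|$) so that the ``$\exists\ybar$'' always has somewhere to land in $\B$; this is routine given the conventions recalled from \cite{AK00} in the background section.
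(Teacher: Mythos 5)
Your high-level plan matches the paper's: prove both directions simultaneously by transfinite induction, use the computability of $\A$ to package a type into a single computable infinitary formula, and track the factor of $3$. However, the inductive step you sketch has errors that are not just bookkeeping, and it is missing one genuine ingredient.

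First, the back-and-forth characterization you state for $\geq_\a$ in your opening paragraph is in fact the one for $\leq_\a$ under the paper's conventions: $(\A,\abar)\leq_\a(\B,\bbar)$ holds iff for every $\dbar\in\B^{<\om}$ and $\g<\a$ there is $\cbar\in\A^{<\om}$ with $(\A,\abar\cbar)\geq_\g(\B,\bbar\dbar)$. In the step where you say you want $(\A,\abar)\leq_\a(\B,\bbar)$, you fix $\cbar\in\A^{<\om}$ and try to produce $\dbar\in\B^{<\om}$ — that is the move for proving $\geq_\a$, not $\leq_\a$. And from $\B\models\theta(\bbar,\dbar)$ with $\theta$ capturing the $\Sic_{3\g}$-type of $\abar\cbar$, part (1) of the induction hypothesis gives $(\A,\abar\cbar)\geq_\g(\B,\bbar\dbar)$, not $\leq_\g$ as you claim.

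Second, and more importantly, proving part (2) needs a sentence of a different shape, which your sketch omits. Your $\exists\ybar\,\theta(\abar,\ybar)$ is an existential sentence at roughly the $\Sic_{3\g+2}$ level; that is the tool for part (1), where the hypothesis is containment of $\Sic_{3\a}$-theories. For part (2) the hypothesis is containment of $\Pic_{3\a}$-theories, so you must produce a $\Pic$-level sentence true in $\A$ whose transfer to $\B$ hands you a witness \emph{in $\A$}. That sentence is the universal one $\forall\ybar\bigvee_{\cbar\in\A^{<\om}}\psi_{\cbar}(\xbar,\ybar)$, where each $\psi_{\cbar}$ is (as in your sketch) the conjunction of the $\Sic_{3\g}$-type of $\abar\cbar$ and hence $\Pic_{3\g+1}$; the whole sentence is $\Pic_{3\g+3}\subseteq\Pic_{3\a}$ and, evaluated at $\bbar\dbar$ in $\B$, produces a $\cbar\in\A^{<\om}$ with $\Sic_{3\g}\text{-tp}_\A(\abar\cbar)\subseteq\Sic_{3\g}\text{-tp}_\B(\bbar\dbar)$, whence $\geq_\g$ by part (1) of the inductive hypothesis. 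This ``$\forall\bigvee$'' form is exactly where the two directions genuinely differ, and it is the missing piece in your argument; the inclusion $\Sic_{3\g+2}\subseteq\Pic_{3\a}$ you invoke doesn't save the existential sentence, because it transfers the wrong sentence in the wrong direction relative to the $\Pic_{3\a}$-theory containment.
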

\begin{proof}
The proof is by transfinite induction.
Suppose first that $\Sic_{3\cdot\a}$-Th$(\A)\subseteq \Sic_{3\cdot\a}$-Th$(\B)$, and we want to show that $\A\geq_\a\B$. 
Take $\abar\in\A^{<\om}$ and $\d<\a$.
Let  $\psi_{\abar}(\xbar)$ be the conjunction of all the $\Pic_{3\d}$-formulas true about $\abar$ in $\A$.
This set of formulas is $\Pi^0_{3\d}$, and hence this conjunction is equivalent to a $\Pic_{3\d+1}$ formula (see \cite[Proposition 7.12]{AK00}), and the formula $\exists \xbar\psi_{\abar}(\xbar)$ is, in particular, $\Sic_{3\a}$.
Since it is true in $\A$ it is true in $\B$, and hence there is $\bbar$ in $\B$ such that $\B\models\psi_{\abar}(\bbar)$.
But then $\Pic_{3\d}$-tp$_\B(\bbar)\supseteq\Pic_{3\d}$-tp$_\A(\abar)$, and hence by the inductive hypothesis that $(\A,\abar)\leq_\d(\B,\bbar)$.

Suppose that $\Pic_{3\cdot\a}$-Th$(\A)\subseteq \Pic_{3\cdot\a}$-Th$(\B)$, and we want to show that $\A\leq_\a\B$. 
Take $\bbar\in\B^{<\om}$ and $\d<\a$.
For each $\abar\in\A^{<\om}$ let $\psi_{\abar}$ be now the $\Pic_{3\d+1}$ formula equivalent to the conjunction of all the $\Sic_{3\d}$-formulas true about $\abar$.
Then $\A$ models $\forall\xbar\bigvee_{\abar\in\A}\psi_{\abar}(\xbar)$.
This is a $\Pic_{3\d+3}$ sentence, and hence it is true about $\B$, too.
So, there is some $\abar$ such that $\B\models \psi_{\abar}(\bbar)$, and hence $\Sic_{3\cdot\a}$-Th$(\A,\abar)\subseteq \Sic_{3\cdot\a}$-Th$(\B,\bbar)$.
By the inductive hypothesis we then get that $(\A,\abar)\geq_\d(\B,\bbar)$.
\end{proof}

Note that if $\a$ is a limit ordinal, then $3\a=\a$.

We are now ready to prove the characterization of effectively minimally unbounded theories.

\begin{theorem}\label{thm: unique model of HSR}
An infinitary sentence $T$ is effectively minimally unbounded if and only if every oracle $X$ with $\om_1^X=\om_1^T$ computes at most one model of high Scott rank (relative to $X$), and some such $X$ computes at least one.
\end{theorem}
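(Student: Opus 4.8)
The plan is to translate both directions into the language of Lemma \ref{thm: effectively unbounded} and the back-and-forth machinery, using three facts: the Nadel bound $SR(\A)\leq\om_1^\A+1$; Nadel's theorem that two $X$-computable structures with the same $X$-computably infinitary theory are isomorphic; and Barwise compactness together with Gandy's basis theorem, exactly as in the proof of Lemma \ref{thm: effectively unbounded}. Throughout, a ``model of high Scott rank relative to $X$'' means a structure $\A$ with an $X$-computable copy and $SR(\A)\geq\om_1^X$.

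For the left-to-right direction, assume $T$ is effectively minimally unbounded. Since $T$ is in particular effectively unbounded, Lemma \ref{thm: effectively unbounded} and the remark following it give a model $\A$ of $T$ with $\om_1^T=\om_1^\A\leq SR(\A)$; any oracle $X$ with $\om_1^X=\om_1^T$ computing a copy of $\A$ then computes a model of $T$ of high Scott rank relative to $X$, which gives the ``some such $X$'' clause. For uniqueness, suppose some $X$ with $\om_1^X=\om_1^T$ computed non-isomorphic models $\A,\B$ of $T$ with $SR(\A),SR(\B)\geq\om_1^X$. By Nadel's theorem, relativized to $X$, there is an $X$-computably infinitary sentence $\varphi$ with $\A\models\varphi$ and $\B\models\neg\varphi$; in particular $\varphi$ has quantifier rank $<\om_1^X=\om_1^T$. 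But $\A$ then witnesses that $T\wedge\varphi$ is unbounded below $\om_1^T$, and $\B$ that $T\wedge\neg\varphi$ is unbounded below $\om_1^T$, contradicting effective minimal unboundedness.

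For the right-to-left direction, assume some $X_0$ with $\om_1^{X_0}=\om_1^T$ computes a model $\A_0$ of $T$ with $SR(\A_0)\geq\om_1^{X_0}$, and that no oracle $X$ with $\om_1^X=\om_1^T$ computes two non-isomorphic such models. First, $\om_1^{\A_0}\leq\om_1^{X_0}=\om_1^T$, and since $SR(\A_0)\leq\om_1^{\A_0}+1$ and $\om_1^T$ is a limit ordinal, in fact $\om_1^{\A_0}=\om_1^T$; so $\A_0$ is a model of $T$ with $\om_1^T=\om_1^{\A_0}\leq SR(\A_0)$, and Lemma \ref{thm: effectively unbounded} gives that $T$ is effectively unbounded. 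Now fix an $\L_{\om_1,\om}$-sentence $\varphi$ of quantifier rank $<\om_1^T$ and suppose, toward a contradiction, that both $T\wedge\varphi$ and $T\wedge\neg\varphi$ are unbounded below $\om_1^T$. As in Lemma \ref{thm: effectively unbounded}, for each $\a<\om_1^T$ let $S_\a$ be a $T$-computably infinitary sentence true exactly in the structures of Scott rank $\geq\a$; since $T\wedge\varphi$ (resp. $T\wedge\neg\varphi$) has models of Scott rank $\geq\a$ for every $\a<\om_1^T$, the set $\{T,\varphi\}\cup\{S_\a:\a<\om_1^T\}$ (resp. its analogue with $\neg\varphi$) satisfies the hypothesis of Barwise compactness inside a countable admissible set of height $\om_1^T$ containing presentations of $T$ and of $\varphi$. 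Barwise compactness then produces a model $\A$ of $T\wedge\varphi$ and a model $\B$ of $T\wedge\neg\varphi$, each of Scott rank $\geq\om_1^T$; being a pair with these properties is a $\Sigma^1_1$ statement in a parameter with $\om_1$ equal to $\om_1^T$, so Gandy's basis theorem yields such a pair computable from a single oracle $X$ with $\om_1^X=\om_1^T$. Since $\A$ and $\B$ disagree on $\varphi$ they are non-isomorphic, and $SR(\A),SR(\B)\geq\om_1^T=\om_1^X$, so they are two non-isomorphic models of $T$ of high Scott rank relative to $X$, contradicting the hypothesis. Hence one of $T\wedge\varphi$, $T\wedge\neg\varphi$ is bounded below $\om_1^T$, and $T$ is effectively minimally unbounded.

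The delicate step is the extraction in the right-to-left direction: producing models of Scott rank $\geq\om_1^T$ computable from a single oracle $X$ with $\om_1^X=\om_1^T$. One must run the Barwise-compactness argument of Lemma \ref{thm: effectively unbounded} simultaneously for $T\wedge\varphi$ and $T\wedge\neg\varphi$ inside one admissible set of the right height and then pull the model pair down with Gandy's basis theorem. The subtle point is that this admissible set must contain $\varphi$; one handles it using that $\varphi$ has quantifier rank below $\om_1^T$, so that on structures of high Scott rank the truth value of $\varphi$ is already determined by back-and-forth information of rank below $\om_1^T$ (cf.\ Lemma \ref{le: computable bf}), allowing one to replace $\varphi$ by such an approximant, or to relativize the whole argument to a base $Z$ with $\om_1^Z=\om_1^T$ witnessing this, without affecting the conclusion.
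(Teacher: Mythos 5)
Your left-to-right direction is correct and essentially identical to the paper's (the paper phrases it positively — every $X$-computably infinitary $\varphi$ has exactly one of $T\wedge\varphi$, $T\wedge\neg\varphi$ unbounded, so $\A$ and $\B$ agree on all such $\varphi$ — while you argue by contradiction via Nadel, but the content is the same).

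In the right-to-left direction, however, there is a genuine gap, and it sits exactly at the ``delicate step'' you flag. Your plan is to run Barwise compactness for $\{T,\varphi\}\cup\{S_\a:\a<\om_1^T\}$ inside an admissible set of height $\om_1^T$ containing a presentation of $\varphi$, and then pull a witnessing pair down via Gandy's basis theorem. The paper explicitly points out that such an admissible set need not exist: $\varphi$ has \emph{quantifier rank} $<\om_1^T$, but the real coding it may be arbitrarily complicated, so one may have $\om_1^{T\wedge\varphi}>\om_1^T$. Your second proposed repair — ``relativize to a base $Z$ with $\om_1^Z=\om_1^T$ witnessing this'' — therefore does not work, because there may be no $Z$ at all computing $\varphi$ with $\om_1^Z=\om_1^T$; and since the right-hand side of the theorem talks specifically about oracles $X$ with $\om_1^X=\om_1^T$, producing a pair of models computable from some $X$ with $\om_1^X>\om_1^T$ gives no contradiction. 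Your first repair — ``replace $\varphi$ by an approximant'' — is the right idea and points at Lemma~\ref{le: computable bf}, but you leave it entirely unspecified, and the construction is the real content of the proof: the paper first fixes an oracle $X$ with $\om_1^{X,T}=\om_1^T$ computing a model $\A$ of high Scott rank (this uses only that $T$ is effectively unbounded, and needs no assumptions about $\varphi$), assumes without loss $\A\models\varphi$, and takes $\psi$ to be the conjunction of the entire $\Pico{X}_{3\a}$-theory of $\A$. Then $\psi$ is $X$-hyperarithmetic (so $\om_1^{X,T\wedge\psi}=\om_1^T$), $\A\models\psi$, and by Lemma~\ref{le: computable bf}(2) every model of $\neg\varphi$ is a model of $\neg\psi$, so $T\wedge\neg\psi$ is unbounded below $\om_1^T$ as well. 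Now the Barwise-plus-Gandy argument can safely be run for $T\wedge\neg\psi$ over $X$, producing $Y\geqt X$ with $\om_1^Y=\om_1^T$ computing a model $\B\models\neg\psi$ of high Scott rank; $Y$ then computes both $\A$ and $\B$, which are non-isomorphic, giving the contradiction. The crucial point you are missing is that the approximant is the $\Pico{X}_{3\a}$-theory of a \emph{specific chosen high-Scott-rank model}, not some approximation extracted from $\varphi$ itself, and that Lemma~\ref{le: computable bf} is what makes $T\wedge\neg\psi$ inherit unboundedness from $T\wedge\neg\varphi$.
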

\begin{proof}
Suppose first that $T$ is effectively minimally unbounded.
Since it is effectively unbounded, there is at least one model $\A$ of $T$ with $\om_1^T=\om_1^{T,\A}\leq SR(\A)$ and some $X$ with $\om_1^{T,X}=\om_1^T$ which computes a presentation for it.
Suppose $\B$ was another such model computable from $X$.
Then, $\A$ and $\B$ satisfy the same $X$-computably infinitary sentence:
This is because, for every $X$-computably infinitary sentence $\varphi$, one of $T\wedge\varphi$ and $T\wedge\neg\varphi$ is bounded below $\om_1^T$, and hence the other one is true in both $\A$ and $\B$.
It follows that $\A$ and $\B$ are isomorphic.

Suppose now that $T$ is not effectively minimally unbounded.
If $T$ is not even effectively unbounded, then, by the Lemma \ref{thm: effectively unbounded}, no $X$ with $\om_1^X=\om_1^T$ computes a model of $T$ of high Scott rank.
Suppose then that it is effectively unbounded and that there is a $\Pii_\a$-sentence $\varphi$ with $\a<\om_1^T$ such that  $T\wedge\varphi$ and $T\wedge\neg\varphi$ are both unbounded below $\om_1^T$.
If we had that $\om_1^{T\wedge\varphi}=\om_1^{T}$, then we easily could directly (applying Barwise compactness and Gandy's basis theorem) find and $X$ with $\om_1^X=\om_1^T$ which computes a two model of $T$ of high Scott rank, one satisfying $\varphi$ and one satisfying $\neg\varphi$.
However, there is no reason to assume that $\om_1^{T\wedge\varphi}=\om_1^{T}$.
We will show that we can use find another formula $\psi$ that also splits $T$ in two effectively unbounded theories, but with $\om_1^{T\wedge\psi}=\om_1^{T}$.

Let $X$ be an oracle with $\om_1^{X,T}=\om_1^T$ and which computes a model $\A$ of high Scott rank, i.e., $\om_1^T=\om_1^\A\leq SR(\A)$.
Then either $\varphi$ or $\neg \varphi$ is true in $\A$; suppose it is $\varphi$.
Let $\psi$ be the conjunction of the whole $\Pico{X}_{3\a}$-theory of $\A$.
For any model $\B$ of $\neg\varphi$ we have $\B\not\geq_\a\A$, and hence $\B\models\neg\psi$ by Lemma \ref{le: computable bf}.
Thus, since $T\wedge\neg\varphi$ is unbounded below $\om_1^T$, so is $T\wedge\neg\psi$.
Since $\om_1^{X,T\wedge\psi}=\om_1^T$ (because $\psi$ is hyperarithmetic in $X$), there is a model $\B\models T\wedge\neg\varphi$ such that $\om_1^T=\om_1^{X,\B}\leq SR(\B)$.
Let $Y\geqt X$ compute a copy of $\B$ and satisfy $\om_1^Y=\om_1^T$.
This $Y$ contradicts the right-hand-side of the theorem as it computes two different models of high Scott rank.
\end{proof}

%%%%%%%%%%%%%%%%%%%%%%%%%%%%%%%%%%%%%%%%%%%%%%%%%%%%%%%%%%%%%%%%%%%%%%%%%%%%%%%%%%%%%%%%%%%%%%%%%%%%%%%%%%%%%%%%%%%%%%%%%%%%%%%%%%%%%%%%%%%%%%%%%%%%%%%%%%%%%%%%%%%%%%%%%%%%%%%%%%%%%%%%%%%%%%%%%%%%%%%%%%%%%%%%%%
\section{The proof of the main theorem}

In this section, we prove Theorem \ref{thm:main}.
That is, assuming $T$ is uniformly effectively dense, we will show that there is a cone such that, relative to every oracle on that cone, $T$ is on top for effective reducibility.
This proof is divided in several steps.
First, in Subsection \ref{ss: Sigma equivalence relations} we study a particular way of representing $\Si^1_1$ equivalence relations on $\om$ using transfinite binary sequences.
In Subsection \ref{ss: tree of structures} we consider trees of structures, where the structures are indexed by transfinite binary sequences, and we show how to use them to define reductions from $\Si^1_1$-equivalence relations to structures.
In Subsection \ref{ss: functions on ordinals}, we deal with a different aspect of the proof which has to do with finding computable representation for functions from ordinals to ordinals. 
In Subsection \ref{ss: building trees}, we go back to the trees of structures, and we show how to build them when we have a uniformly effectively dense theory.
We finally put all the ingredients together in Subsection \ref{ss: tying}.

%%%%%%%%%%%%%%%%%%%%%%%%%%%%%%%%
\subsection{A representation of $\Si^1_1$-equivalence structures} \label{ss: Sigma equivalence relations}

To prove that $T$ is on top under effective reduction we need to define an embedding from an arbitrary $\Si^1_1$ equivalence relation on $\om$ into the computable models of $T$.
We start by finding a particular representation of an arbitrary $\Si^1_1$ equivalence relation that will be useful to build this embedding.

The first lemma allows us to approximate $\Si^1_1$ equivalence relations by hyperarithmetic ones.

\begin{lemma}\label{le: presentation of simga eq}
For every $\Si^1_1$-equivalence relation $\sim$ of $\om$, there is a sequence $\{\sim_\a:\a<\om_1^{CK}\}$ of equivalence relations such that, for all $n,m\in\om$,
\begin{itemize}
\item $n\sim m\iff (\forall \a\in\om_1^{CK})\ n\sim_\a m$.
\item For $\b\leq\a$, $n\sim_\a m \implies n\sim_\b m$.
\item Each $\sim_\b$ is $\Si^0_{\b+1}$ uniformly in $\b$.
\end{itemize}
\end{lemma}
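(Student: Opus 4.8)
The plan is to unravel the $\Sigma^1_1$ definition of $\sim$ into a computable "approximation by stages along $\mathcal{H}$", exactly as one does for $\Sigma^1_1$ sets via Harrison's ordering, and then read off the promised hyperarithmetic stratification. Concretely, since $\sim$ is $\Sigma^1_1$, there is a computable function $(n,m)\mapsto \Tree_{n,m}$ producing trees on $\omega$ such that $n\sim m$ iff $\Tree_{n,m}$ is ill-founded; by transitivity, reflexivity and symmetry of $\sim$ we may as well replace $\Tree_{n,m}$ by a single tree that is ill-founded iff $n\sim m$, but nothing below needs that normalization. For each ordinal $\a$ I would set $n\sim_\a m$ to mean, roughly, "$\Tree_{n,m}$ has no ranked descending witness of rank $<\a$", i.e.\ there is a descending sequence through $\Tree_{n,m}$ of order type $\geq$ (some index for) $\a$, or more precisely that $\Tree_{n,m}$ does not embed order-reversingly into $\a$. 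This is the standard $\Pi$-over-$\Sigma$ trade: "rank of $\Tree_{n,m}$ is $\geq\a$" is $\Si^0_{something}$ in $\a$ uniformly.

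The main technical nuisance is that the naive definition above need not be an \emph{equivalence relation} at each finite stage $\a$; transitivity can fail for a fixed $\a$. The standard fix is to first take the transitive closure at stage $\a$ and then cut down. I would handle this by defining, for each $\a<\om_1^{CK}$, a preliminary relation $\approx_\a$ (reflexive and symmetric by construction, $\Si^0_{\a+1}$ uniformly in a notation for $\a$, and decreasing in $\a$), with $n\sim m \iff \forall\a\ n\approx_\a m$, and then letting $\sim_\a$ be the transitive closure of $\approx_\b$ for a suitably large $\b$ — but since the transitive closure of a $\Si^0_{\b+1}$ relation on $\om$ is again $\Si^0_{\b+1}$ (a countable union over finite chains, each chain membership being a finite conjunction), one checks this stays within the required complexity bound, possibly at the cost of shifting $\a$ by a fixed finite or $\om$-amount, which is harmless since the three bullet points only ask for \emph{some} such sequence. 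To see $\sim\,=\,\bigcap_\a\sim_\a$: if $n\sim m$ then $n\approx_\a m$ for all $\a$ hence $n\sim_\a m$ for all $\a$; conversely if $n\not\sim m$ then $\Tree_{n,m}$ is well-founded with some rank $\g<\om_1^{CK}$, so for $\a>\g$ we get $n\not\approx_\a m$ — and one must argue the transitive closure still separates them, which follows because the whole $\sim$-class structure is respected: if $n\sim_\a m$ via a chain $n=k_0\approx_\a k_1\approx_\a\cdots\approx_\a k_r=m$ with $\a$ large, iterating the "true $\sim$" argument along the chain forces $n\sim m$, contradiction. (This is where I expect to spend the most care.)

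For the complexity bookkeeping I would invoke directly the standard fact, citable to the Ash–Knight background or provable by a routine transfinite induction, that the $\a$-th approximation to an ill-foundedness statement is $\Si^0_{\a+1}$ (or $\Pi^0_{\a}$, whichever is cleaner to state) uniformly in $\a$ along Kleene's $\mathcal{O}$, and that monotonicity in $\a$ holds on the nose from the definition via ranks. Actually the cleanest route, and the one I would write up, is via the back-and-forth-style rank functions already in play in this paper: for a computable tree $S$, let $S\in WF_\a$ mean there is a rank function $S\to\a$; then $\{(S,\a): S\in WF_\a\}$ is arithmetic in $\a$ uniformly, $WF_\a$ increases with $\a$, and $S$ is well-founded iff $S\in WF_\a$ for some $\a<\om_1^{CK}$ — overspill along $\mathcal{H}$ is not even needed here, only the honest $\om_1^{CK}$-length recursion. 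Plugging $S=\Tree_{n,m}$ and taking complements (ill-founded $=$ "$\sim$") turns the increasing family $WF_\a$ into the decreasing family $\sim_\a$, and the transitivity patch above completes the construction.

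In short: the skeleton is the classical boundedness/approximation argument for $\Si^1_1$ predicates, specialized to equivalence relations; the only genuinely new wrinkle is maintaining transitivity stagewise, which I would resolve by a transitive-closure operation that provably stays inside $\Si^0_{\b+1}$, at the price of an innocuous reindexing of the stages.
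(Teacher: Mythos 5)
The paper disposes of this lemma by citation: it is Burgess's reflection theorem \cite{Bur79}, with a self-contained proof for equivalence relations on reals in \cite{MonAnalyticER}, transferred here by coding naturals as reals. You instead attempt a direct construction from a tree representation $T_{n,m}$ of $\sim$: approximate by rank-based relations $\approx_\alpha$, then force transitivity by a stagewise transitive closure $\sim_\alpha$. The complexity bookkeeping is fine (closing under finite chains over $\om$ only adds number quantifiers, so stays within $\Sigma^0_{\beta+1}$), and so is the inclusion $\sim\subseteq\bigcap_\alpha\sim_\alpha$. But the converse inclusion --- exactly the step you flag as delicate --- is where there is a real gap, and the argument you sketch there does not close it.

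A chain $n=k_0\approx_\alpha\cdots\approx_\alpha k_r=m$ only says each $T_{k_i,k_{i+1}}$ has rank $\geq\alpha$; it does not make any link truly $\sim$-related. There is no ``$\alpha$ large enough'' to force that, because the set $\{(k,k'):k\not\sim k'\}$ is $\Pi^1_1$, so $\Sigma^1_1$-bounding gives no uniform bound on the ranks of the well-founded $T_{k,k'}$ --- those ranks need not be bounded below $\om_1^{CK}$ --- and moreover the witnessing chain can change as $\alpha$ grows. So ``iterating the true-$\sim$ argument along the chain'' never gets off the ground. What actually closes the gap is $\Sigma^1_1$-bounding applied to a \emph{single} computable tree $S_{n,m}$ that packages all chains at once: below the root hang one node per finite chain $c=\langle n=k_0,\ldots,k_r=m\rangle$, and below that node hang the synchronized product $T_{k_0,k_1}\otimes\cdots\otimes T_{k_{r-1},k_r}$, whose rank is $\min_i\,\mathrm{rank}\,T_{k_i,k_{i+1}}$ and which is ill-founded iff every factor is. Then $S_{n,m}$ is computable uniformly in $(n,m)$, is ill-founded iff $n\sim m$ (by transitivity of the true $\sim$), and satisfies $n\sim_\alpha m$ iff $\mathrm{rank}\,S_{n,m}>\alpha$. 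If $n\not\sim m$, well-foundedness of the computable tree $S_{n,m}$ gives $\mathrm{rank}\,S_{n,m}=\gamma<\om_1^{CK}$, hence $n\not\sim_\gamma m$. This chain-tree-of-products device is the missing idea; it is not a cosmetic ``transitivity patch'' at the end, but the core of the converse inclusion.
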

\begin{proof}
The Borel version of this result of for analytic equivalence relations on the reals is due to Burgess \cite[Corollary 1]{Bur79}, but he only required each $\sim_\b$ to be Borel and not necessarily $\Si^0_{\b+1}$ uniformly in $\b$.
A proof of this exact lemma, but for equivalence relations on reals, can be found in \cite{MonAnalyticER}.
If one codes each natural number by a real (say the one that has a 1 at position $n$, and 0's elsewhere), then the lemma from \cite{MonAnalyticER} applies here too.
\end{proof}

\begin{definition}
Let $\fseq{\a}$ to be the set of all $\a$-long binary sequences $\si\in 2^\a$ with only finitely many 1's.
\end{definition}

Notice that $\fseq{\alpha}$ is countable and computably presentable whenever $\a$ is itself computable, as opposed to $2^\alpha$ which has size continuum for infinite $\alpha$.

\begin{definition}
For a computable ordinal $\a$, we say that a sequence $\{\si_n:n\in\om\}\subseteq 2^{\circ\a}$ is {\em uniformly $\Si^0_{\xi\mapsto\xi+1}$} if deciding if $\si(\xi)_n=1$ is $\Si^0_{\xi+1}$ uniformly in $\xi$ and $n$,
or, in other words, if there is a c.e.\ operator $W$, such that $\si_n(\xi)=1\iff n\in W^{\nabla^{\xi}}$, where $\nabla^\xi$ is a complete $\Delta^0_\xi$ real (see \cite{MonAsh}). 
\end{definition}

The definition above does not require $\a$ to be an ordinal, but just that the iterations of the jump, $\nabla^\xi$, exist for $\xi<\a$.
So, if we assume that $\nabla^\xi$ exists for each $\xi$ in the Harrison linear ordering, $\H$, then we can still talk about uniformly $\Si^0_{\xi\mapsto\xi+1}$ sequences in $\fseq{\H}$.

\begin{lemma}\label{le: sequences on top}
For each $\Si_1^1$-equivalence relation $\sim$ on $\om$, there exists a uniformly $\Si^0_{\xi\mapsto\xi+1}$ sequence $\{\si_n:n\in\om\}\subseteq \fseq{\H}$,  
such that
\[
(\forall n,m\in\om) \ n\sim m\iff \si_n\upto\om_1^{CK}=\si_m\upto\om_1^{CK}.
\]
\end{lemma}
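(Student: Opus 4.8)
The plan is to combine Lemma \ref{le: presentation of simga eq} with an overspill argument along the Harrison linear ordering $\H$. First I would apply Lemma \ref{le: presentation of simga eq} to obtain a sequence $\{\sim_\a : \a < \om_1^{CK}\}$ of equivalence relations on $\om$ which is decreasing in $\a$, whose intersection over all $\a < \om_1^{CK}$ is $\sim$, and with $\sim_\b$ being $\Si^0_{\b+1}$ uniformly in $\b$. From this I would \emph{define} the candidate sequences: for each $n\in\om$ and each $\xi\in\H$, set $\si_n(\xi) = 1$ if and only if $n$ is \emph{not} the least element of its $\sim_\xi$-class, i.e. $\si_n(\xi)=1 \iff \exists m<n\ (m\sim_\xi n)$. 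Since $\sim_\xi$ is $\Si^0_{\xi+1}$ uniformly in $\xi$ (this makes sense not only for $\xi<\om_1^{CK}$ but for all $\xi\in\H$, because the arithmetical definitions iterate up $\H$ just as the back-and-forth relations do, using the iterated jumps $\nabla^\xi$), the predicate ``$\exists m<n\ (m\sim_\xi n)$'' is $\Si^0_{\xi+1}$ uniformly in $\xi$ and $n$, so $\{\si_n\}$ is uniformly $\Si^0_{\xi\mapsto\xi+1}$ by construction.

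Next I would check that each $\si_n$ lies in $\fseq{\H}$, i.e. that $\si_n$ has only finitely many $1$'s. Because the $\sim_\xi$ are decreasing in $\xi$, once $\si_n(\xi)=0$ — meaning $n$ is the least element of its $\sim_\xi$-class — we have $\si_n(\zeta)=0$ for all $\zeta\geq\xi$ (a $\sim_\zeta$-class is contained in a $\sim_\xi$-class, so $n$ remains the least element). Hence the set $\{\xi\in\H : \si_n(\xi)=1\}$ is an initial segment of $\H$. I must argue this initial segment is finite. The set of $\xi\in\H$ such that there \emph{exists} some $m<n$ with $m\sim_\xi n$ is a $\Si^1_1$ subset of $\H$: if it contained all of $\om_1^{CK}$, then by overspill it would contain a non-standard $\a\in\H\sminus\om_1^{CK}$, and since it is an initial segment it would contain all of $\om_1^{CK}$; but containing all of $\om_1^{CK}$ means that for every $\a<\om_1^{CK}$ there is some $m<n$ (and only finitely many candidates $m$) with $m\sim_\a n$, so by pigeonhole some fixed $m<n$ has $m\sim_\a n$ for cofinally — hence all — $\a<\om_1^{CK}$, giving $m\sim n$. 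Thus if $\si_n$ has infinitely many (equivalently, a $\om_1^{CK}$-long initial segment of) $1$'s, then $n$ is not the $\sim$-least element of its class. One could absorb this by first normalizing so that $0$ is in its own class, or more simply allow the argument as is: what matters is only $\si_n\upto\om_1^{CK}$, and I claim $\si_n\upto\om_1^{CK} = \si_m\upto\om_1^{CK}$ exactly when $n\sim m$.

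For the equivalence, observe $\si_n(\xi) = \si_m(\xi)$ for all $\xi<\om_1^{CK}$ iff $n$ and $m$ have the same least-$\sim_\xi$-class-representative status at every level. If $n\sim m$ then for each $\a<\om_1^{CK}$ we have $n\sim_\a m$, so $n,m$ lie in the same $\sim_\a$-class, hence have the same least element, hence $\si_n(\a)=\si_m(\a)$; this gives $\si_n\upto\om_1^{CK}=\si_m\upto\om_1^{CK}$. Conversely, I would refine the coding slightly to recover the converse: instead of a single bit, let $\si_n(\xi)$ at stage $\xi$ encode (via an arithmetical pairing within the level, which only shifts finitely many jumps and preserves the $\Si^0_{\xi\mapsto\xi+1}$ bound) enough information to determine the $\sim_\xi$-class of $n$ among the classes with a representative below $n$ — for instance $\si_n(\xi)$ could record, in a block of coordinates, the least $m\leq n$ with $m\sim_\xi n$. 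Then $\si_n\upto\om_1^{CK}=\si_m\upto\om_1^{CK}$ forces $n\sim_\a m$ for all $\a<\om_1^{CK}$, hence $n\sim m$. The main obstacle is precisely arranging the coding so that both directions of the equivalence hold simultaneously while keeping the uniform $\Si^0_{\xi\mapsto\xi+1}$ complexity and the finitely-many-$1$'s condition; the overspill/pigeonhole argument above is the tool that guarantees the initial segments of nonzero coordinates terminate below $\om_1^{CK}$, and the rest is bookkeeping in the arithmetical hierarchy along $\H$.
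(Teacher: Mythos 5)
Your starting point matches the paper's: both proofs invoke Lemma~\ref{le: presentation of simga eq} to get a decreasing $\Si^0_{\b+1}$-uniform approximating family $\{\sim_\b\}$. But your actual construction of $\si_n$ has a real gap, and what you call ``the rest is bookkeeping'' is in fact the crux of the argument.

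Your first construction, $\si_n(\xi)=1\iff\exists m<n\,(m\sim_\xi n)$, has the finitely-many-$1$'s issue partly under control (the $1$'s form an initial segment), but as you note it loses the converse: two elements $n,m$ that are never the least in their $\sim_\xi$-classes both get the constant-$1$ sequence on $\om_1^{CK}$ regardless of whether $n\sim m$. Your proposed fix---recording at each $\xi$ the least representative $\mu_n(\xi)=\min\{m\leq n:m\sim_\xi n\}$ in a block of coordinates---would indeed pin down the equivalence class, but it now fails the \emph{other} requirement: if you write a $1$ in each block to name $\mu_n(\xi)$, the resulting sequence has a $1$ for every $\xi$ and so is never in $\fseq\H$; if instead you only write a $1$ when $\mu_n(\xi)$ changes, you still need to determine ``$\mu_n(\xi)=j$'', which requires $\forall i<j\,(i\not\sim_\xi n)$, a $\Pi^0_{\xi+1}$ clause, so the bit is only $\Delta^0_{\xi+2}$ rather than $\Si^0_{\xi+1}$. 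You never close the gap between these two failure modes, and it is not a routine bookkeeping matter.

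The paper's construction resolves this tension with a \emph{recursive} definition in which $\si_n(\xi)$ depends on $\si_m\upto\xi$ for $m<n$: set $\si_n(\xi)=1$ iff, for the least $m<n$ with $\si_m\upto\xi=\si_n\upto\xi$, there is some $\b<\xi$ with $n\not\sim_\b m$. The bit records only the \emph{moment of a split} from the current block-representative, not the identity of the representative. This buys both properties at once. Whenever $\si_n(\xi)=1$, that $m$ has $\si_m(\xi)=0$ (it is its own least), so $\si_m\upto(\xi+1)\neq\si_n\upto(\xi+1)$ and the least $m'$ agreeing with $n$ beyond $\xi$ is strictly larger; since it is bounded by $n$, there are at most $n$ many $1$'s, giving $\si_n\in\fseq\H$. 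The question ``$\exists\b<\xi\,(n\not\sim_\b m)$'' against the one distinguished $m$ keeps the complexity at $\Si^0_{\xi+1}$, avoiding the $\Pi^0_{\xi+1}$ minimality clause your fix needs. And the converse holds because a disagreement at the first $\xi$ where $\si_{n_0},\si_{n_1}$ diverge pins down a $\b<\xi$ with $n_1\not\sim_\b m\sim_\b n_0$; the other direction is an argument by minimality of $m$ and an appeal to a sufficiently large $\b<\om_1^{CK}$. I would suggest reworking your proof around this split-tracking idea rather than trying to encode the full representative at each level.
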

\begin{proof}
We will define $\si_n(\xi)$ by transfinite recursion on $\xi$.
The general idea is as follows.
Suppose we have already defined $\si_n\upto \xi$ for all $n$.
So, we have an equivalence relation $E_\xi$ on $\om$ given by $n\  E_\xi\ m$ if $\si_n\upto\xi=\si_m\upto \xi$.
At stage $\xi$ we preserve the inclusion $\sim_\xi\subseteq E_\xi$, and we only take one step towards making $E_\xi$ closer to $\sim_\xi$ as follows.
Each $E_\xi$-equivalence class consists of many (possibly just one) $\sim_\xi$-equivalence classes.
If it is only one, we are in good shape and we do not do anything.
Within each $E_\xi$-equivalence class which contains at least two $\sim_\xi$ equivalence classes, we will define $\si_n(\xi)$ to be $0$ or $1$ so that we split the $E_\xi$-equivalence class into two $E_{\xi+1}$-equivalence classes, by separating the first $\sim_\xi$-equivalence class from the rest.
We will actually consider at $\sim_{\xi-1}$ instead of $\sim_\xi$ to keep the complexity low.

More concretely:
\begin{quote}
Let $\si_n(\xi)=1$ if for the least  $m< n$ with $\si_m\upto\xi=\si_n\upto\xi$, there is some $\b<\xi$ such that  $n\not\sim_\b m$, and let $\si_n(\xi)=0$ otherwise.
\end{quote}
By counting quantifiers, it is not hard to see that $\si_n$ is uniformly $\Si^0_{\xi\mapsto\xi+1}$.

Take $n_0,n_1\in\om$, and suppose that $\si_{n_0}\upto\om_1^{CK}\not=\si_{n_1}\upto\om_1^{CK}$.
Let $\xi$ be the first value where $\si_{n_0}(\xi)\neq\si_{n_1}(\xi)$.
Suppose $\si_{n_0}(\xi)=0$ and $\si_{n_1}(\xi)=1$.
Let $m$ be the least  with $\si_m\upto\xi=\si_{n_0}\upto\xi=\si_{n_1}\upto\xi$.
From the definition of $\si_{n_0}(\xi)$ and $\si_{n_1}(\xi)$, we get that for some $\b<\xi$, $n_1\not\sim_\b m\sim_\b n_0$, and hence $n_0\not\sim n_1$.

Suppose now that $m<n$, $\si_m\upto\om_1^{CK}=\si_n\upto\om_1^{CK}$, and, towards a contradiction, that $m\not\sim n$.
Suppose that $m$ is the least for which there exists such an $n$.
Thus, if there was some $n_0<m$ with $\si_{n_0}\upto\om_1^{CK}=\si_m\upto\om_1^{CK}$, we would have  $n_0\sim m$ and $n_0\sim n$.
So we can assume that $m$ is the least such that  $\si_{m}\upto\om_1^{CK}=\si_n\upto\om_1^{CK}$.
For some $\b<\om_1^{CK}$ high enough, we have that $n\not\sim_\b m$, and $m$ is still the least with $\si_{m}\upto\b=\si_n\upto\b$.
Let $\xi=\b+1$.
Then, by definition of $\si_n(\xi)$, we would get $\si_n(\xi)=1$ and $\si_m(\xi)=0$ contradicting that $\si_m\upto\om_1^{CK}=\si_n\upto\om_1^{CK}$.
\end{proof}

Knight and Montalb\'an \cite{KMonTop} have already shown that $\Si^1_1$ equivalence relations could be represented by uniformly $\Si^0_{\xi\mapsto\xi+1}$ sequences in $2^\H$, possibly with infinitely many 1's.
The fact that using sequences with finitely many 1's is enough, is important for the rest of the paper.

Aa a corollary of this lemma, we can now prove Theorem \ref{thm: hyp on top}.
We first prove the following result that avoids the use of Turing determinacy, and implies Theorem \ref{thm: hyp on top} using Turing determinacy.

\begin{lemma}\label{le: hyp to rec}
If the set of oracles, relative to which $T$ is on top under hyperarithmetic reducibility, is co-final in the Turing degrees, 
then so is the set of oracles relative to which $T$ is on top under effective reducibility.
\end{lemma}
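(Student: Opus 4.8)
\textbf{Proof plan for Lemma \ref{le: hyp to rec}.}

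The goal is to promote a hyperarithmetic reduction to a computable one, paying only a finite jump in the oracle, and then absorb that jump using co-finality. So the plan is as follows. Fix an oracle $Z$ and let $C \geq_T Z$ be an oracle relative to which $T$ is on top under hyperarithmetic reducibility; I want to produce $X \geq_T Z$ relative to which $T$ is on top under effective (i.e.\ computable) reducibility. Given an arbitrary $\Si^1_1(X)$ equivalence relation $\sim$ on $\om$, apply Lemma \ref{le: sequences on top} (relativized to $X$) to obtain a uniformly $\Si^0_{\xi\mapsto\xi+1}$ sequence $\{\si_n : n\in\om\}\subseteq \fseq{\H^X}$ with $n\sim m \iff \si_n\upto\om_1^X = \si_m\upto\om_1^X$. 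The point of this representation is that the sequence $\{\si_n\}$ is, in an appropriate sense, $X$-computable in the limit-of-jumps way, and the reduction $n\mapsto \si_n$ is thus a computable (not merely hyperarithmetic) object built from $X$ — so the complexity has been pushed entirely into the oracle $X$, which we are still free to choose.

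The main step is then to turn the existence of a hyperarithmetic reduction of $\sim$ to isomorphism on computable models of $T$ (available relative to $C$) into a \emph{computable} reduction of $\sim$ to isomorphism on $X$-computable models of $T$, for a suitable $X$. The idea: a hyperarithmetic reduction $f$ witnessing that $T$ is on top under hyperarithmetic reducibility relative to $C$ is computable in some $C^{(\a)}$ for a fixed $C$-computable ordinal $\a$; equivalently, $f$ is computable relative to an oracle $X$ that codes enough of the hyperarithmetic hierarchy over $C$. Set $X$ to be such an oracle — e.g.\ $X = C^{(\a)}$, or more robustly an $X\geq_T C$ of the same $\om_1$ (so that $\om_1^X=\om_1^C$ and the Harrison orderings line up) which computes the relevant jumps. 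Relative to this $X$, the previously hyperarithmetic reduction becomes outright computable, and composing with the $X$-computable map $n\mapsto\si_n$ (and the standard coding of $\fseq{\H^X}$-indexed data into $X$-indices of structures) yields a computable function reducing $\sim$ to the isomorphism problem among $X$-computable models of $T$. Since $X\geq_T C\geq_T Z$, this $X$ witnesses co-finality of the set of oracles relative to which $T$ is on top under effective reducibility.

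The hard part will be the bookkeeping ensuring that a single oracle $X$ works \emph{uniformly} for all $\Si^1_1(X)$ equivalence relations $\sim$, rather than a different $X$ per $\sim$: one must check that the ordinal height $\a$ at which the hyperarithmetic reduction lives can be bounded uniformly (it can, using $\om_1^X$ and an overspill/nonstandard-ordinal argument on $\H^X$ as set up in the background section, since all the constructions are arithmetic in the back-and-forth data), and that passing to $X$ does not enlarge the class of equivalence relations one has to handle in a way that outruns the gain. This is exactly the kind of fixed-point-in-the-oracle argument where one wants $X$ to satisfy $\om_1^X = \om_1^C$ while computing all $C$-hyperarithmetic sets, and then invokes Lemma \ref{le: sequences on top} relative to $X$ to keep the reduction computable. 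Once that uniformity is in hand, the conclusion is immediate, and Theorem \ref{thm: hyp on top} follows by applying Turing determinacy to the (projective) set of oracles relative to which $T$ is on top under effective reducibility.
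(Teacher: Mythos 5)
Your overall shape---promote the hyperarithmetic reduction to a computable one by replacing $C$ with a jump $X=\nabla^\b(C)$ at a level that preserves $\om_1^X=\om_1^C$---is the right instinct, but the circularity you yourself flag as ``the hard part'' is left unresolved, and the fixes you sketch do not work. If you try to handle an arbitrary $\Si^1_1(X)$ relation $\sim$, the ordinal level $\b$ at which its hyperarithmetic reduction lives a priori depends on $\sim$, so you have no single $\b$ to jump to. Your proposed remedy, an $X\geqt C$ with $\om_1^X=\om_1^C$ that ``computes all $C$-hyperarithmetic sets,'' is inconsistent: any such $X$ would compute Kleene's $\mathcal{O}$ relative to $C$ and would therefore have $\om_1^X>\om_1^C$. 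The overspill remark is also not a substitute, since overspill gives a non-standard level in $\H^C$ at which no actual jump $\nabla^\b(C)$ exists. Finally, there is no ``standard coding of $\fseq{\H^X}$-indexed data into $X$-indices of structures'' available at this point; the only route from sequences into models of $T$ is the hypothesized hyperarithmetic reduction, and the tree-of-structures machinery is built later for the main theorem, not for this lemma.

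The missing idea is that you need to reduce only \emph{one} equivalence relation. Fix a non-standard $\a^*\in\H^C\sminus\om_1^C$ and define $e_0\sim^C e_1$ iff the $\fseq{\a^*}$-sequences coded by $e_0,e_1$ as $\Si^C_{\xi\mapsto\xi+1}$-indices agree below $\om_1^C$. The role of Lemma \ref{le: sequences on top} here is not to decompose a given $\sim$, but to show that this single $\sim^C$ is itself on top under effective reducibility relative to $C$; hence a computable reduction of $\sim^C$ into isomorphism of $T$-models already witnesses that $T$ is on top. Since $\sim^C$ is $\Si^1_1(C)$, the hypothesis gives one $C$-hyperarithmetic reduction $h$ of $\sim^C$ into isomorphism of $C$-computable models, and $h$ is $\Delta^0_\b(C)$ for a single fixed $\b<\om_1^C$; set $X=\nabla^\b(C)$, which preserves $\om_1$. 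To close the loop, translate $\sim^X$ into $\sim^C$ in an $X$-computable way by a shift: from a $\Si^X_{\xi\mapsto\xi+1}$-code $i$ for $\si$, produce a $\Si^C_{\xi\mapsto\xi+1}$-code $g(i)$ for $\b$-many $0$'s followed by $\si$, so that $i_0\sim^X i_1\iff g(i_0)\sim^C g(i_1)$. Then $f(i):=$ an $X$-index for the $X$-computable structure $\A^C_{h(g(i))}$ gives the desired $X$-computable reduction of $\sim^X$, and the universality of $\sim^X$ relative to $X$ finishes the proof.
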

\begin{proof}
Let $C$ be any real.
We want to show that there is some $X\geqt C$ relative to which $\KK$ is on top under effective reducibility.
By hypothesis we might assume $C$ is such that $T$ is on top under hyperarithmetic reducibility relative to $C$.

Consider the following equivalence relation on $\om$.
First take a non-standard ordinal $\a^*\in \H^C\sminus\om_1^C$.
For each $e\in\om$, let $\si_e$ be the sequence in $\fseq{\a^*}$ for which $e$ is a $\Si^C_{\xi\mapsto\xi+1}$-code.
In other words, let $\si_e(\xi)=1\iff \xi\in W_{e}^{\nabla^{\xi}}$, where $\nabla^\xi(C)$ is a complete $\Delta^{0}_\xi(C)$ real and $W_e$ is the $e$th c.e.\ operator.
Given $e_0,e_1\in\om$, let $e_0\sim^C e_1$ if $\si_{e_0}\upto\om_1^{CK}=\si_{e_1}\upto\om_1^{CK}$.
Notice that this is a $\Si^1_1$-equivalence relation.
This is the equivalence relation Knight and Montalb\'an had considered in \cite{KMonTop}, and proved that it is on top under effective reducibility (relative to $C$), which now follows from Lemma \ref{le: sequences on top}.
Just because it is $\Si^1_1$, there is a $C$-hyperarithmetic reduction $h$ from $\om$ to $C$-computable indices of structures in $\KK$, such that $e_0\sim^C e_1\iff \A^C_{h(e_0)}\isom \A^C_{h(e_1)}$ (where $\A^C_n$ is the structure coded by the $n$-th Turing machine with oracle $C$).
For some $\b<\om_1^C$, $h$ is $\Delta^\b_0(C)$.
Let $X$ be $\nabla^\b(C)$.
We will define an $X$-computable function $f$ such that $i_0\sim^X i_1\iff \A^X_{f(i_0)}\isom \A^X_{f(i_1)}$, which would then imply that $\KK$ is on top under effective reducibility relative to $X$.
Let $i$ be a $\Si^X_{\xi\mapsto\xi+1}$-code for a sequence  $\si\in \fseq{\a^*}$.
Let $\sihat$ consist of a string of $\beta$ many 0's followed by $\si$ (that is $\sihat(\g)=0$ if $\g<\b$ and $\sihat(\b+\g)=\si(\g)$).
Find an index $e$ for $\sihat$ as a $\Si^C_{\xi\mapsto\xi+1}$ sequence and let $g(i)=e$.
We have defined a function $g\colon\om\to\om$ such that $i_0\sim^X i_1 \iff g(i_0)\sim^C g(i_1)$.
 Let $f(i)$ be an $X$-index for $\A^C_{h(g(i))}$ viewed as an $X$-computable structure.
Thus, we have that $\A^X_{f(i)}= \A^C_{h(g(i))}$.
We then have that 
\[
i_0\sim^X i_1 \iff g(i_0)\sim^C g(i_1) \iff \A^C_{h(g(i_0))}\isom \A^C_{h(g(i_1))} \iff \A^X_{f(i_0)}\isom \A^X_{f(i_1)},
\]
as wanted.
\end{proof}

\begin{proof}[Proof of Theorem \ref{thm: hyp on top}]
Apply projective Turing determinacy to the set of oracles $X$ relative to which $T$ on top under effective reducibility.
\end{proof}

%%%%%%%%%%%%%%%%%%%%%%%%%%%%%%%%%
\subsection{Trees of structures} \label{ss: tree of structures}

Now that we can represent $\Si^1_1$-equivalence relations in terms of uniformly $\Si^0_{\xi\mapsto\xi+1}$ sequences in $\fseq{\H}$, we need to associate these sequences with structures.

\begin{definition}\label{def: eta tree}
For an ordinal $\eta$, an {\em $\eta$-tree of structures} is a sequence of structures $\{\A_\si: \si\in \fseq{\eta}\}$ such that, for every $\si,\tau\in \fseq{\eta}$ and $\xi<\eta$, we have that 
\[
\si\upto \xi=\tau\upto \xi 
\quad \implies \quad
\A_{\si}\equiv_{\xi+1}\A_{\tau}.
\]
\end{definition}

We will show that when a class of structures has arbitrary long non-trivial trees of structures, the class is on top under effective reducibility.
Of course, to get non-trivial trees of structures we have to ask that all structures $\A_\si$ are non-isomorphic. 
But we have to be careful with this, as we do not want to use $\Pi^1_1$-properties in the definition.

The following is a generalization of Ash--Knight's theorem on pairs of structures to trees of structures.
It says that if we have an $\eta$-friendly $\eta$-tree of structures, and we are given an index for a $\Si^0_{\xi\mapsto\xi+1}$ sequence $\si\in\fseq{\eta}$, we can uniformly computably build a copy of $\A_\si$.
Thus, even if guessing the bits of $\si$ is complicated, namely $\Si^0_{\xi\mapsto\xi+1}$, then we can still produce a computable copy of $\A_\si$.

\begin{theorem}[\cite{MonAsh}]\label{thm: app}
Let $\{\A_\si:\si\in\fseq{\eta}\}$ be a computable $\eta$-friendly $\eta$-tree of structures.
Let $\{\si_n:n\in\om\}\subseteq \fseq{\eta}$ be uniformly $\Si^0_{\xi\mapsto\xi+1}$.
Then, there exists a computable sequence of computable structures $\{\C_n:n\in\om\}$ such that for all $n$, $\C_n\isom\A_{\si_n}$.
\end{theorem}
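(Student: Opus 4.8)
The plan is to obtain the computable structures $\C_n$ by running the Ash--Knight ``pairs of structures'' machinery (from \cite{MonAsh}) in a transfinite, tree-indexed version. First I would recall the shape of that construction: given a computable $\eta$-friendly family and a single $\Si^0_{\xi\mapsto\xi+1}$-approximated path, one builds a copy by a priority/true-stage argument in which, at stages corresponding to levels $\xi<\eta$, one commits to a current guess $\si_n\upto\xi$ and builds a finite piece of a copy of $\A_{\si_n\upto\xi}$ (really, of any structure in the current $\equiv_{\xi+1}$-class), and whenever the guess at level $\xi$ changes one exploits $\eta$-friendliness — i.e.\ the c.e.\ enumeration of the back-and-forth relations $(\B_k,\abar)\leq_\g(\B_l,\bbar)$ for $\g<\eta$ — to find, via a $\g$-back-and-forth move for appropriate $\g<\xi$, a correction that extends the partial isomorphism built so far. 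The key structural fact that makes this work for trees is exactly the defining condition of an $\eta$-tree of structures: if $\si\upto\xi=\tau\upto\xi$ then $\A_\si\equiv_{\xi+1}\A_\tau$, so a finite amount of information about the path only pins down the structure up to $(\xi{+}1)$-back-and-forth equivalence, which is precisely the level of accuracy the level-$\xi$ stage of the construction can guarantee.

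The steps, in order, would be: (1) Fix the uniform c.e.\ operator $W$ witnessing that $\{\si_n\}$ is uniformly $\Si^0_{\xi\mapsto\xi+1}$, so that $\si_n(\xi)=1\iff n\in W^{\nabla^\xi}$, together with a computable enumeration of $\nabla^\xi$-approximations giving, at each finite stage $s$, a provisional value $\si_{n,s}(\xi)$; the ``true stages'' for level $\xi$ are those after which this value has settled. (2) For each $n$, run the true-stage construction from \cite{MonAsh} using the $\eta$-friendliness of $\{\A_\si:\si\in\fseq{\eta}\}$ to do the back-and-forth corrections; the construction is uniform in $n$ and in the index for $W$, so the resulting sequence $\{\C_n\}$ is a computable sequence of computable structures. (3) Verify by transfinite induction on $\xi<\eta$ that along the true stages the partial isomorphism being maintained between $\C_n$ and (a copy of) $\A_{\si_n}$ is correct up to the $\xi$-back-and-forth relation, using the tree condition to absorb the fact that only $\si_n\upto\xi$ is known at level-$\xi$ true stages; passing to the limit over all $\xi<\eta$ yields $\C_n\equiv_\xi\A_{\si_n}$ for all $\xi<\eta$, and then (4) conclude $\C_n\isom\A_{\si_n}$. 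For step (4) I would invoke that the construction actually produces a genuine isomorphism in the limit — the back-and-forth corrections at successive levels are set up to cohere into a single bijection — rather than merely infinitary-equivalent structures; this is the standard output of the Ash--Knight method and is what \cite{MonAsh} is quoted for.

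The main obstacle is step (3): making the transfinite bookkeeping precise. The subtlety is that at a level-$\xi$ true stage one has only committed to $\si_n\upto\xi$, and a later change of $\si_n(\xi)$ (which can happen arbitrarily late, since the value is only $\Si^0_{\xi+1}$) must be accommodated by a back-and-forth move of rank $<\xi$; one has to check that the ``$\equiv_{\xi+1}$'' guarantee supplied by the tree condition is exactly strong enough to permit such a move while keeping all the finite commitments made at levels $<\xi$ intact. Organizing the priority tree so that each level $\xi<\eta$ acts only finitely often along the true path, and verifying that $\eta$-friendliness gives the corrections effectively and uniformly, is the technical heart of the argument — but it is essentially a packaging of the argument already carried out in \cite{MonAsh}, now with the single ``pair'' replaced by the $\eta$-indexed tree, so I expect no genuinely new idea beyond this reorganization.
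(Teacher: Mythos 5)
Your proposal is correct in spirit, and it follows essentially the same route as the paper: both appeal to the true-stage/priority construction of \cite{MonAsh} and rely on $\eta$-friendliness plus the tree condition to make the corrections at each level work. The paper's proof, however, is much shorter than what you sketch: it simply notes that \cite{MonAsh} already proves this theorem in its tree-indexed form, under the slightly weaker hypothesis that $\si\upto\xi=\tau\upto\xi$ together with $\si(\xi)\leq\tau(\xi)$ implies $\A_{\si}\geq_{\xi+1}\A_{\tau}$ (a one-directional back-and-forth inequality keyed to the order of the bits), and then observes that the $\eta$-tree condition $\si\upto\xi=\tau\upto\xi\Rightarrow\A_\si\equiv_{\xi+1}\A_\tau$ trivially implies it. So there is no ``reorganization from a pair to a tree'' left to do; your assumption that the cited result treats only the pairs case and must be repackaged is a small misremembering. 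One other point: your steps (3)--(4) briefly worry about passing from $\C_n\equiv_\xi\A_{\si_n}$ for all $\xi<\eta$ to $\C_n\isom\A_{\si_n}$, and you are right that the construction directly produces an isomorphism rather than merely $\equiv_\xi$-equivalence for each $\xi$ --- this is exactly what the true-stage machinery delivers, and it is worth stating cleanly since for general $\eta$ the former does not by itself yield the latter.
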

\begin{proof}
The result in \cite{MonAsh} is slightly finer than this.
In there, it is assumed that $\si\upto \xi=\tau\upto \xi \and \si(\xi)\leq\tau(\xi) \implies \A_{\si}\geq_{\xi+1}\A_{\tau}$, the conclusion being the same.
That assumption still holds with our definition of $\eta$-tree.
\end{proof}

Again in Definition \ref{def: eta tree}, the fact that $\eta$ is an ordinal is not essential so long as we can talk about the $\xi$-back-and-forth relations for every $\xi<\eta$.
On any computable family of structures, one can always define these relations on an initial segment of $\H$ which is longer than $\om_1^{CK}$.
Let us notice that if  $\a^*$ is a computable pseudo-well-ordering, $\a^*\in\H\sminus\om_1^{CK}$, and we have a computable $\a^*$-tree of structures $\{\A_\si: \si\in \fseq{\a^*}\}$, then whenever $\si\upto\om_1^{CK}=\tau\upto\om_1^{CK}$,  $\A_\si\isom \A_\tau$.
This is because we would have that $\A_\si\equiv_{\om_1^{CK}}\A_\tau$, which implies they are isomorphic.

\begin{definition}
For $\a^*\in\H\sminus\om_1^{CK}$, we say that an $\a^*$-tree of structures $\{\A_\si: \si\in \fseq{\a^*}\}$ is {\em proper} if for ever $\si,\tau\in\fseq{\a^*}$,
\[
\si\upto\om_1^{CK}=\tau\upto\om_1^{CK} \iff \A_\si\isom \A_\tau.
\]
\end{definition}

The following theorem shows how trees of structures are used to get reductions from $\Si^1_1$-equivalence relations.

\begin{theorem} \label{thm: tree to top}
Suppose that there exists a computable, proper, $\a^*$-friendly $\a^*$-tree of models of $T$ for some $\a^*\in\H\sminus\om_1^{CK}$.
Then $T$ is on top under effective reducibility.
\end{theorem}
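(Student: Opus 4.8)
The plan is to string together the three tools already assembled. Lemma \ref{le: sequences on top} says that an arbitrary $\Si^1_1$ equivalence relation on $\om$ is coded by a uniformly $\Si^0_{\xi\mapsto\xi+1}$ sequence in $\fseq{\H}$, the code comparing restrictions of the sequences to $\om_1^{CK}$. Theorem \ref{thm: app} says that such a sequence, fed through a computable friendly tree of structures, produces a single computable sequence of computable copies of the structures it indexes. And properness of the tree converts ``two indices agree below $\om_1^{CK}$'' into ``the associated structures are isomorphic.'' Composing these should, from any $\Si^1_1$ equivalence relation $\sim$, yield the computable reduction $f$ required by the definition of being on top under effective reducibility.

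Carrying this out: given $\sim$, first apply Lemma \ref{le: sequences on top} to obtain a uniformly $\Si^0_{\xi\mapsto\xi+1}$ sequence $\{\si_n:n\in\om\}\subseteq\fseq{\H}$ with $n\sim m\iff\si_n\upto\om_1^{CK}=\si_m\upto\om_1^{CK}$. Truncate every $\si_n$ at the fixed $\a^*\in\H\sminus\om_1^{CK}$ carrying the tree. Since $\a^*$ lies in the ill-founded part of $\H$, the entire well-founded initial segment $\om_1^{CK}$ precedes it, so each $\si_n\upto\a^*$ lies in $\fseq{\a^*}$, its further restriction to $\om_1^{CK}$ is exactly $\si_n\upto\om_1^{CK}$, and the family $\{\si_n\upto\a^*:n\in\om\}$ is still uniformly $\Si^0_{\xi\mapsto\xi+1}$ over $\xi<\a^*$ because truncation changes no bit. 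Now feed the computable, proper, $\a^*$-friendly $\a^*$-tree $\{\A_\si:\si\in\fseq{\a^*}\}$ of models of $T$ given by hypothesis, together with this sequence, into Theorem \ref{thm: app}: this yields a computable sequence of computable structures $\{\C_n:n\in\om\}$ with $\C_n\isom\A_{\si_n\upto\a^*}$. Each $\A_\si$ is a model of $T$, so each $\C_n$ is a computable model of $T$, and reading indices off the computable sequence gives a computable $f$ with $\A_{f(n)}=\C_n$.

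Finally one checks that $f$ is a reduction, which is exactly where properness of the tree enters:
\begin{align*}
n\sim m &\iff \si_n\upto\om_1^{CK}=\si_m\upto\om_1^{CK} \iff (\si_n\upto\a^*)\upto\om_1^{CK}=(\si_m\upto\a^*)\upto\om_1^{CK}\\
&\iff \A_{\si_n\upto\a^*}\isom\A_{\si_m\upto\a^*} \iff \C_n\isom\C_m \iff \A_{f(n)}\isom\A_{f(m)}.
\end{align*}
I do not expect a genuine obstacle here: the one point needing a moment's care is the length mismatch between Lemma \ref{le: sequences on top}, which produces sequences of length $\H$, and Theorem \ref{thm: app}, which wants them indexed by $\fseq{\a^*}$ for the particular ill-founded $\a^*$ carrying the tree; but truncating to $\a^*$ is free, since $\a^*$ extends $\om_1^{CK}$ inside $\H$, so all the information that properness and the coding lemma rely on lives below $\a^*$ and survives the truncation verbatim. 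The substance of the argument is entirely in Lemma \ref{le: sequences on top}, Theorem \ref{thm: app}, and the hypothesized existence of a proper, friendly tree of models of $T$.
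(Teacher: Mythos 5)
There is a genuine gap: you apply Theorem \ref{thm: app} with $\eta=\a^*$, but $\a^*$ is a non-standard element of the Harrison linear ordering --- a pseudo-well-ordering, not an ordinal --- and Theorem \ref{thm: app} is stated (and, via the priority argument of \cite{MonAsh}, proved) only for actual computable ordinals $\eta$. Its correctness proof runs by transfinite induction along $\eta$, which does not go through along an ill-founded linear ordering. The remark in the paper that the notion of $\eta$-tree still makes sense for $\eta\in\H\sminus\om_1^{CK}$ concerns only Definition \ref{def: eta tree}; it does not license feeding $\a^*$ into Theorem \ref{thm: app}. So the step ``this yields a computable sequence with $\C_n\isom\A_{\si_n\upto\a^*}$'' is exactly the point that needs justification, and it is not free.

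The paper closes this gap with an overspill argument, and that is the real content of the proof. For each genuine ordinal $\b<\om_1^{CK}$, truncate the $\a^*$-tree to a $\b$-tree and replace each $\si_n$ by $\si_{n,\b}$ (copy $\si_n$ up to $\b$, then pad with $0$'s). Theorem \ref{thm: app} legitimately applies to this $\b$-tree, giving a computable sequence $\{\C_n\}$ with $\C_n\isom\A_{\si_{n,\b}}$. Hence the set $P$ of $\b\in\a^*$ for which such a computable sequence exists contains all of $\om_1^{CK}$; since $P$ is $\Si^1_1$, it overspills to some $\b^*\in\a^*\sminus\om_1^{CK}$, with a witnessing computable sequence $\{\C_n\}$ such that $\C_n\isom\A_{\si_{n,\b^*}}$. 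Because $\si_{n,\b^*}\upto\om_1^{CK}=\si_n\upto\om_1^{CK}$ and the tree is proper, $\A_{\si_{n,\b^*}}\isom\A_{\si_n}$, and your final chain of equivalences then goes through verbatim. So the overall architecture of your argument --- Lemma \ref{le: sequences on top}, Theorem \ref{thm: app}, properness --- is the right one, but the direct invocation of Theorem \ref{thm: app} at $\a^*$ must be replaced by this overspill step.
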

\begin{proof}
Let $\sim$ be a $\Si^1_1$ equivalence relation on $\om$.
We need to build a sequence $\{\C_n:n\in\om\}$ of computable models of $T$ such that $n \sim m\iff \C_n\isom\C_m$.

Let $\{\si_n:n\in\om\}\subseteq\fseq{\a^*}$ be a uniformly $\Si^0_{\xi\mapsto\xi+1}$ sequence such that $(\forall n,m\in\om) \ n\sim m\iff \si_n\upto\om_1^{CK}=\si_m\upto\om_1^{CK}$ as given by Lemma \ref{le: sequences on top}.
We will now apply an overspill argument to the theorem above.
For each  $\b\in\a^*$, and $n\in\om$, let $\si_{n,\b} \in \fseq{\a^*}$ be defined by copying $\si_n$ up to $\b$ and extending to $\a^*$ with $0$'s (i.e.\ $\si_{n,\b}(\g)=\si_n(\g)$ if $\g<\b$ and $\si_{n,\b}(\g)=0$ if $\g\geq\b$).

Let $P$ be the set of all $\b\in \a^*$ such that there exists a computable sequence $\{\C_n:n\in\om\}$ such that $\C_n\isom\A_{\si_{n,\b}}$ for all $n\in\om$ and for all $\xi<\b$.
The set $P$ is $\Si^1_1$.
The set $P$ contains all ordinals $\b<\om_1^{CK}$ by Theorem \ref{thm: app} applied to the $\b$-tree obtained by truncating the $\a^*$ tree.
Thus, there is a non-standard ordinal $\b^*\in P\setminus \om_1^{CK}$ together with a witnessing sequence $\{\C_n:n\in\om\}$ satisfying that $\C_n\isom\A_{\si_{n,\b^*}}$ for every $n$.
Now, for each $n$, $\A_{\si_n}\isom\A_{\si_{n,\b^*}}$ because $\si_n\upto \om_1^{CK}=\si_{n,\b^*}\upto \om_1^{CK}$.
Thus, $\C_n\isom\A_{\si_n}$ as needed.
\end{proof}

%%%%%%%%%%%%%%%%%%%%%%%%%%%%%%%
\subsection{Functions from ordinals to ordinals} \label{ss: functions on ordinals}
The next objective is  be to build such $\a^*$-trees.
But before that we need a lemma about the representation of functions from ordinals to ordinals.

\begin{definition}
We say that $f\colon\om_1\to\om_1$ {\em witnesses that $T$ is uniformly effectively dense} if for every $\a\in\om_1$ and every $\varphi\in\Pii_\a$ such that $T\wedge\varphi$ is effectively unbounded, there is a $\psi\in\Pii_{f(\a)}$ such  that both $T\wedge\varphi\wedge \psi$ and $T\wedge\varphi\wedge \neg\psi$ are unbounded below $\om_1^{T\wedge\varphi}$.
\end{definition}

Notice that if $T$ is uniformly effectively dense, then there is a projective representation for such an $f$.
By that we mean a projective subset $F\colon WO\times WO$ (where $WO$ is the set of well-orderings of $\om$) such that for every $A\in WO$, $f(|A|)=\b$ if and only if there exists some $B\in WO$ with $|B|=\b$ and $(A,B)\in F$ (where $|A|$ is the ordinal in $\om_1$ of the same order type as $A$).
However,  for our argument we will need $f$ to be much simpler than projective.
Under enough determinacy assumptions, one  can always find a much simpler presentation for $f$.

\begin{definition}
We say that $f\colon\om_1\to\om_1$  {\em looks computable according to $X\in 2^\om$} if $f$ maps ordinals below $\om_1^X$ to ordinals below $\om_1^X$, and on some $X$-computable linear ordering $\a^*$, which has an initial segment isomorphic to $\om_1^{CK}$ (i.e.\ a Harrison linear ordering), $X$ can compute a function $f^X\colon\a^*\to\a^*$ which coincides with $f$ on $\om_1^X$.
\end{definition}

\begin{theorem}(ZF+PD) \label{thm: looks computable}
For every function $f\colon\om_1\to\om_1$ with a projective presentation there is a cone such that $f$ looks computable according to every $X$ on that cone.
\end{theorem}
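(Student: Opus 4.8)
The plan is to establish this via Turing determinacy applied to a projective degree-invariant set. Given the function $f\colon\om_1\to\om_1$ with a projective presentation $F\subseteq WO\times WO$, I want to show that the set
\[
\S=\{X\in 2^\om : f \text{ looks computable according to } X\}
\]
is co-final in the Turing degrees; then, since $F$ is projective, $\S$ is projective and degree-invariant, so by projective Turing determinacy it contains a cone, which is exactly the conclusion. So everything reduces to the co-finality statement: given an arbitrary real $Z$, I must find $X\geqt Z$ relative to which $f$ looks computable.

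\textbf{Key steps.} First, relativize everything to $Z$; without loss of generality I will work over $Z$ and produce $X$ above it. The idea is to build $X$ so that it directly codes, on some $X$-computable Harrison-style linear ordering $\a^*$, a function $f^X\colon\a^*\to\a^*$ agreeing with $f$ on $\om_1^X$. Step one: for each ordinal $\a<\om_1^Z$, the value $f(\a)$ is an ordinal which (using the projective presentation $F$ and some basis theorem for the relevant projective pointclass, e.g. Gandy-style basis arguments under $PD$, or simply absoluteness together with the fact that $f(\a)$ is a genuine countable ordinal) has a presentation hyperarithmetic in — or at any rate computable from — some real only slightly above $Z$. The cleanest route is: iterate. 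Build an increasing sequence $Z=Z_0\leqt Z_1\leqt\cdots$ where $Z_{n+1}$ is chosen so that $Z_{n+1}$ computes copies of $f(\a)$ for all $\a<\om_1^{Z_n}$, together with enough of $F$ to witness these values, and also so that $\om_1^{Z_{n+1}}>\om_1^{Z_n}$; then let $X_0=\bigoplus_n Z_n$, so that $\om_1^{X_0}=\sup_n\om_1^{Z_n}$ is a limit of admissibles and $X_0$ computes, for every $\a<\om_1^{X_0}$, a copy of the ordinal $f(\a)$, uniformly. Step two: having arranged that $f$ restricted to $\om_1^{X_0}$ is ``$X_0$-computable'' in the sense that there is an $X_0$-computable functional taking (an $X_0$-computable index of) a well-ordering of type $\a<\om_1^{X_0}$ to one of type $f(\a)$, one uses the standard Harrison-linear-ordering / overspill machinery: take an $X_0$-computable linear ordering $\a^*$ with well-founded part isomorphic to $\om_1^{X_0}$ (a relativized Harrison ordering $\H^{X_0}$), and observe that the $\Si^1_1(X_0)$ set of $\b\in\a^*$ for which the computation of $f^{X_0}$ extends coherently up to $\b$ contains all of $\om_1^{X_0}$, hence by overspill some nonstandard $\b^*$; truncating $\a^*$ at $\b^*$ gives the $X_0$-computable linear ordering with the required Harrison initial segment and the $X_0$-computable function $f^{X_0}$ on it agreeing with $f$ below $\om_1^{X_0}$. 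Taking $X=X_0$ finishes the co-finality claim.

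\textbf{Main obstacle.} The delicate point is step one — controlling how much oracle power is needed to name the ordinal values $f(\a)$ and to witness them through the projective set $F$ \emph{uniformly}, while keeping $\om_1^X$ from jumping past where one has control. Here is where $PD$ enters essentially: one needs that for a projective relation, given $\a$, a witness $B\in WO$ with $|B|=f(\a)$ can be found computably in a real whose Turing jump over $Z$ does not overshoot, and the clean way to get the uniformity across all $\a<\om_1^X$ at once is the iterated-join construction above, using at each stage a basis theorem for the projective pointclass of $F$ (available under $PD$) to keep $\om_1$ pinned down. The rest — the overspill argument and the construction of the relativized Harrison ordering carrying $f^X$ — is routine given the machinery already set up in the paper (cf. the proofs of Lemma \ref{le: hyp to rec} and Theorem \ref{thm: tree to top}). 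Finally, degree-invariance of $\S$ is immediate: if $f$ looks computable according to $X$ and $Y\equiv_T X$, then $\om_1^Y=\om_1^X$ and $Y$ computes the same linear ordering and function, so $f$ looks computable according to $Y$; co-finality plus projectivity plus Turing determinacy then yield the cone.
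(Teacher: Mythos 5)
Your overall frame is the same as the paper's: reduce to showing that the set $\S$ of $X$ according to which $f$ looks computable is co-final in the Turing degrees, and then apply projective Turing determinacy. The iterated-join construction of $X_0=\bigoplus_n Z_n$ is also a reasonable way to ensure that $\om_1^{X_0}$ is closed under $f$ (the paper handles closure by a separate determinacy argument showing a cone of admissibles is closed under $f$; your route gets it for free, which is fine).

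The gap is in passing from ``$X_0$ computes a copy of $f(\a)$ for each $\a<\om_1^{X_0}$'' to ``$X_0$ computes a Harrison ordering $\a^*$ together with an $X_0$-computable $f^{X_0}\colon\a^*\to\a^*$ agreeing with $f$ on $\om_1^{X_0}$.'' Your construction scatters the witnesses for $f(\a)$ across the levels $Z_{n+1}$ with no coherence: the word ``uniformly'' is doing all the work and is not justified. There is no $X_0$-computable way to assemble these into a single function on a single $X_0$-computable linear ordering, because $X_0$ cannot compute a well-ordering of type $\om_1^{X_0}$ and cannot computably recognize, inside a Harrison ordering, which element corresponds to which genuine ordinal. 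The subsequent overspill appeal also fails as stated: you would need the set of $\b\in\H^{X_0}$ at which ``the computation of $f^{X_0}$ extends coherently'' to be $\Si^1_1(X_0)$, but any reasonable formalization of coherence quantifies over the projective relation $F$ (``there is $B\in WO$ with $(A_\a,B)\in F$ and $\dots$''), which is not $\Si^1_1$ unless $F$ is much simpler than projective. So the overspill simply does not apply.

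The paper sidesteps exactly these difficulties by a set-theoretic argument: take $\a$ admissible with $L_\a[f]\models$ ``every ordinal is countable'' (here $f$ is a relation symbol), and apply Barwise compactness over the admissible set $L_\a[f]$ to an infinitary theory asserting KP plus ``$f$ agrees with the given values below $\a$'' plus ``every ordinal is coded by a real,'' obtaining an ill-founded model $\M$ with well-founded part exactly $\a$. A nonstandard ordinal $\a^*\in\M$ and a real $X\in\M$ coding $\a^*$ and $f^\M\upto\a^*$ then satisfy $\om_1^X=\a$ and directly realize the definition of ``looks computable.'' The ill-founded model hands you, in one step, the coherent Harrison ordering plus the function on it, which is precisely the data your construction has trouble assembling. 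To repair your argument you would essentially need to reinvent this device (or something equivalent, such as Gandy--Harrington style forcing over a suitable admissible set), so the iterated-join step should be replaced rather than patched.
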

\begin{proof}
First, we claim that there is an oracle $Y$ such that every $Y$-admissible ordinal is closed under $f$.
This follows from PD and the fact that  the set of ordinals $\a\in\om_1$ such that $\a$ is closed under $f$ forms a club, which is projective when viewed as a subset of $WO$:
Consider the set of all $X$ such that $\om_1^X$ is closed under $f$.
By projective Turing determinacy there is a cone, say with base $Y$, that is either contained in or disjoint form this set. 
Sacks proved that the $Y$-admissible ordinals are exactly the ones of the form $\om_1^X$ for some $X\geqt Y$.
Thus, either every $Y$-admissible ordinal is closed under $f$ or none is.
But, since the $Y$-admissible ordinals contain a club, and so do the ordinals closed under $f$, there is at least one $Y$-admissible ordinal closed under $f$.
But then they all are.
Let us relativize the rest of the proof to such $Y$, and assume that every admissible ordinal is closed under $f$.

Let $\S$ be the set of all $X$ according to which $f$ looks computable.
This set is projective, and by projective Turing determinacy, all we need to do is to show that is co-final in the Turing degrees, i.e., that $\forall Z\exists X\geqt Z\ (X\in \S)$. 
We relativize the rest of the proof to such $Z$, so all we have to do is show that there is some $X\in \S$.

Consider $L_{\om_1}[f]$, where $f$ is viewed as s relation symbol, and $L_{\a+1}[f]$ is defined to be the set of definable subsets of $(L_\a[f];\in,f\cap\a\times\a)$ (see, for instance, \cite[Section 1.3]{Kan03}).
Let $\a$ be such that $L_\a[f]$ is admissible and every ordinal is countable inside $L_\a[f]$.
(For instance let $\a=\om_1^{L[f]}$.)
Now, using Barwise compactness for the admissible set $L_{\a}[f]$ \cite[Theorem III.5.6]{Bar75} we get an ill-founded model $\M=(M;\in^\M,f^\M)$ of $KP$ whose ordinals have well-founded part equal to $\a$, with $f^\M\upto \a$ coinciding with $f\upto\a$, and satisfying that every ordinal can be coded by a real.
(To show this one has to consider the infinitary theory in the language $L=\{\in, f,c\}$ saying all this, plus axioms saying that the constant symbol $c$ is an ordinal and that any ordinal below $\a$ exists and is below $c$.
Then  observe that whole the set of axioms is $\Si_1(L_\a[f])$, and that, choosing by $c$ appropriately, $L_\a[f]$ is a model of any subset of these axioms which is a set in $L_\a[f]$.
Thus, by Barwise compactness \cite[Theorem III.5.6]{Bar75}, this theory has a model and its ordinals have well-founded part at least $\a$.
Then, using \cite[Theorem III.7.5]{Bar75}, we get such a model with well-founded part exactly $\a$.)
Let $\a^*$ be a non-standard ordinal in $\M$, i.e., $\a^*\in ON^\M\sminus \a$, and let $X$ be a real in $\M$ coding $\a^*$ and $f^\M\upto \a^*$.
Notice that $\om_1^X=\a$.
(To see this, we have that $\om_1^X\geq\a$ because it codes every initial segment of $\a$, and $\om_1^X\leq\a$ because every $X$-computable well-ordering is isomorphic to an ordinal in $\M$ and hence below $\a$.)
This shows that  $f$ looks computable according to $X$.
\end{proof}

%%%%%%%%%%%%%%%%%%%%%%%%%%%%
\subsection{Building a tree of structures} \label{ss: building trees}

Suppose $T$ is  uniformly effectively dense witnessed by $f$.
To be able to apply Theorem \ref{thm: tree to top} we would like to build, for each $X$ on a cone, a computable, proper, $\a^*$-friendly $\a^*$-tree of models of $T$ for some non-standard $\a^*\in \H^X\sminus \om_1^X$.
For this we would like to use an overspill argument, but the first problem we encounter is that being ``proper'' is a $\Pi^1_1$ property.
For that reason, we consider the notion of $g$-proper, which is $\Delta^1_1$.

\begin{definition}
Given $g\colon \om_1\to\om_1$ and $\eta\in\om_1$, we say that an $\eta$-tree $\{\A_\si:\si\in\fseq{\eta}\}$ is {\em $g$-proper} if for every $\xi<\eta$, if $\si\upto\xi\neq\tau\upto \xi$, then $\A_{\si}\not\equiv_{g(\xi)}\A_{\tau}$.
\end{definition}

We remark that, on one hand, being a $g$-proper tree is a $\Delta^1_1$ property (relative to $g$).
On the other hand, if we have $\a^*$-tree of models of $T$ for some computable non-standard $\a^*\in \H\sminus \om_1^{CK}$, which satisfies the definition of ``$g$-proper tree'' for $\xi<\om_1^{CK}$, then we know the tree is actually proper.

The function  $g$ we are going to use is defined by iterating $f$.
That is, for $\b\in\om_1$, 
\[
g(\b)=\sup_{\g<\b}f(g(\g)+1) +\om.
\]

Without loss of generality, we will assume that for all $\b$, $\b\leq f(\b)$.
The same is then true for $g$.
We remark that the definition of $g$ is far from being optimal.

Before considering non-standard trees, we want to show that, for every $X$ on a cone and every $\a<\om_1^X$, $X$ computes an $g$-proper, $\a$-friendly $\a$-tree.
The first step is to show that $g$-proper $\a$-trees exists.

\begin{lemma} \label{le: alpha tree}
Assume $T$ is uniformly effectively dense witnessed by $f\colon\om_1\to\om_1$, and let $g$ be defined by iterating $f$ as above.
For every $\a\in\om_1$, there is a $g$-proper $\a$-tree.
\end{lemma}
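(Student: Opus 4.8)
The plan is to build the $g$-proper $\alpha$-tree $\{\A_\sigma:\sigma\in\fseq{\alpha}\}$ by transfinite recursion on $\alpha$, using the uniform-effective-density of $T$ to split at each coordinate. I would maintain the stronger inductive hypothesis that, in addition to the tree being $g$-proper, each $\A_\sigma$ is (equivalent to the set of models of) an extension $\That_\sigma = T\wedge\varphi_\sigma$ of $T$ with $\varphi_\sigma\in\Pii_{h(\alpha)}$ for a suitable bound $h$, and that each such $\That_\sigma$ is effectively unbounded (so that density keeps applying). For the base case $\alpha=0$, $\fseq{0}=\{\langle\rangle\}$ and I just take $\A_{\langle\rangle}$ to be any model of $T$; effective unboundedness of $T$ is part of the hypothesis. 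At limit stages $\lambda$, a sequence $\sigma\in\fseq{\lambda}$ with finitely many $1$'s is determined by $\sigma\upto\xi$ for $\xi$ past the last $1$, so the coherence conditions $\A_\sigma\equiv_{\xi+1}\A_\tau$ follow from the conditions already imposed at stages $<\lambda$; the only real content at a limit is to check that the chain of extensions $\That_{\sigma\upto\xi}$ built along the way still has an effectively unbounded ``limit'' extension, which will follow by Barwise compactness as in Lemma \ref{thm: effectively unbounded}.

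The heart of the construction is the successor step $\alpha\mapsto\alpha+1$. Given the $g$-proper $\alpha$-tree with its extensions $\That_\sigma$, I must extend each $\sigma\in\fseq{\alpha}$ to the two strings $\sigma^\frown 0$ and $\sigma^\frown 1$ in $\fseq{\alpha+1}$ and choose structures $\A_{\sigma^\frown 0},\A_{\sigma^\frown 1}$ that (i) are both $\equiv_{\alpha+1}$ to $\A_\sigma$ (the coherence requirement of an $(\alpha+1)$-tree for the new coordinate $\xi=\alpha$), and (ii) satisfy $\A_{\sigma^\frown 0}\not\equiv_{g(\alpha)}\A_{\sigma^\frown 1}$ (the new $g$-properness requirement at $\xi=\alpha$), while (iii) keeping the extensions effectively unbounded so the recursion can continue. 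For this I apply the witnessing function $f$: since $\That_\sigma$ is an effectively unbounded extension of $T$ lying in $\Pii_{h(\alpha)}$, there is $\psi\in\Pii_{f(h(\alpha))}$ such that both $\That_\sigma\wedge\psi$ and $\That_\sigma\wedge\neg\psi$ are unbounded below $\om_1^{\That_\sigma}$, hence (shrinking if necessary, and using that unboundedness below $\om_1^{\That_\sigma}$ gives effective unboundedness of these extensions) effectively unbounded. Set $\That_{\sigma^\frown 0}=\That_\sigma\wedge\psi$ and $\That_{\sigma^\frown 1}=\That_\sigma\wedge\neg\psi$, and let $\A_{\sigma^\frown 0},\A_{\sigma^\frown 1}$ be models of high Scott rank of these two theories, furnished by Lemma \ref{thm: effectively unbounded}. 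Requirement (ii) holds because a $\Pi_{f(h(\alpha))}$ sentence distinguishes them and $f(h(\alpha))\le g(\alpha)$ once $h$ is chosen compatibly with the iteration defining $g$; requirement (i), that $\A_{\sigma^\frown 0}\equiv_{\alpha+1}\A_\sigma\equiv_{\alpha+1}\A_{\sigma^\frown 1}$, is arranged because models of high Scott rank of an extension $\That_\sigma$ of $T$ can be taken $\equiv_\beta$ to a fixed model of $\That_\sigma$ for arbitrarily large $\beta<\om_1^{\That_\sigma}$ — in particular for $\beta=\alpha+1$, which is below the Scott ranks involved. One must also propagate coherence for the old coordinates $\xi<\alpha$: if $\sigma\upto\xi=\tau\upto\xi$ then $\A_\sigma\equiv_{\xi+1}\A_\tau$ was known, and since $\xi+1\le\alpha+1$ and back-and-forth relations are downward compatible, extending both by a $0$ or by a $1$ preserves $\equiv_{\xi+1}$ as long as the new structures are chosen $\equiv_{\alpha+1}$-close to their parents, which dominates $\equiv_{\xi+1}$.

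The main obstacle I anticipate is bookkeeping the quantifier-rank bounds so that the sentence produced at coordinate $\xi$ genuinely lands below $g(\xi)$: the witness $\psi$ at stage $\sigma$ is only guaranteed in $\Pii_{f(\mathrm{qr}(\varphi_\sigma))}$, and $\mathrm{qr}(\varphi_\sigma)$ grows as we descend the tree, so I need the recursive definition $g(\beta)=\sup_{\gamma<\beta}f(g(\gamma)+1)+\om$ to exactly absorb this growth — this is precisely why $g$ is defined by iterating $f$, and checking that $\mathrm{qr}(\varphi_{\sigma})< g(|\sigma|)$ by induction (so that $f(\mathrm{qr}(\varphi_\sigma))< f(g(|\sigma|-1)+1)\le g(|\sigma|)$ at the next step) is the one place where the argument is not purely formal. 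A secondary subtlety is the passage from ``unbounded below $\om_1^{\That}$'' in the definition of the witnessing $f$ to ``effectively unbounded,'' which needs the observation (implicit in Lemma \ref{thm: effectively unbounded} and Theorem \ref{thm: unique model of HSR}) that $\om_1^{\That_\sigma\wedge\psi}$ can be taken equal to $\om_1^{\That_\sigma}$ after replacing $\psi$ by the conjunction of a bounded-rank fragment of the theory of a high-Scott-rank model, exactly as in the proof of Theorem \ref{thm: unique model of HSR}; this keeps the whole recursion inside a single admissible ordinal and makes ``effectively unbounded'' persist down the tree.
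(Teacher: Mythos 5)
Your high-level idea — split with a $\psi$ furnished by uniform effective density, track quantifier ranks against the iterated $g$, keep models of high Scott rank inside one admissible ordinal — is the right one, but the way you organize the recursion creates a gap that the paper's argument is specifically designed to avoid. At your successor step you only require $\A_{\sigma^\frown 0}\equiv_{\alpha+1}\A_\sigma\equiv_{\alpha+1}\A_{\sigma^\frown 1}$, which is exactly the $\eta$-tree coherence condition but is \emph{not} enough to verify $g$-properness of the resulting tree. To see why, take $\sigma',\tau'$ in the final tree first differing at position $\zeta$; you want $\A_{\sigma'}\not\equiv_{g(\zeta+1)}\A_{\tau'}$, and the natural route (which the paper takes) is to chain through the ``flattened'' strings $\sigma^*,\tau^*$ that copy $\sigma',\tau'$ up to $\zeta+1$ and are $0$ thereafter: one needs $\A_{\sigma'}\equiv_{g(\zeta+1)}\A_{\sigma^*}$, but $\sigma'$ and $\sigma^*$ differ at positions $>\zeta$, and your construction only guarantees $\equiv_{\zeta'+1}$ per flip at position $\zeta'$, giving at best $\equiv_{\zeta+2}$, which is far below $g(\zeta+1)\geq\zeta+1+\omega$. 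There is also a second, independent manifestation of the same weakness: to propagate $g$-properness from the $\alpha$-tree to the $(\alpha+1)$-tree at an old coordinate $\xi<\alpha$ you need your new structures to be $\equiv_{g(\xi)}$-close to their parents, and $g(\xi)$ can be \emph{larger} than $\alpha+1$ since $g$ iterates $f$, which is unbounded. So $\equiv_{\alpha+1}$ does not dominate $\equiv_{g(\xi)}$. The paper resolves both problems by requiring $\A_\sigma\equiv_{g(\xi)}\A_{\sigma^-}$ at a flip at position $\xi$, achieved concretely by letting $\theta_0$ be the \emph{entire} $\Pico{Y}_{g(\xi)}\cup\Sico{Y}_{g(\xi)}$ theory of $\A_{\sigma^-}$ and invoking Lemma~\ref{le: computable bf} (using that $g(\xi)$ is a limit, so $3g(\xi)=g(\xi)$), so any model of $\theta_0$ is automatically $\equiv_{g(\xi)}$ to $\A_{\sigma^-}$. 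Your sketch never forces this: a model of ``$\That_\sigma\wedge\psi$ of high Scott rank'' has no reason to be $\equiv_{\alpha+1}$ to $\A_\sigma$, let alone $\equiv_{g(\xi)}$, unless $\That_\sigma$ already pins down the full $\Pi_{g(\xi)}$-theory of $\A_\sigma$, which is precisely what you would have to arrange.

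A second structural difference makes your limit stage genuinely problematic rather than a formality. You rebuild \emph{both} children $\A_{\sigma^\frown 0}$ and $\A_{\sigma^\frown 1}$ (and strictly strengthen $\That_\sigma$ along even the all-zeros branch), so the structures assigned to a fixed path keep changing as $\alpha$ grows, and at a limit $\lambda$ there is no canonical ``limit structure'' for a $\sigma\in\fseq{\lambda}$ — Barwise compactness will give you \emph{some} model of the union of the theories, but not one that cohabits with the rest of your tree at the right back-and-forth levels. The paper sidesteps this entirely: it fixes the target length $\alpha$ and a single oracle $X$ with $\alpha<\om_1^X$ such that $g$ looks computable according to $X$ (Theorem~\ref{thm: looks computable}), keeps every $\A_\sigma$ with $\om_1^{\A_\sigma}=\om_1^X$, and builds the $\alpha$-tree by \emph{finite} induction on the number of $1$'s in $\sigma$: flipping the last $1$ to $0$ reduces the count, and strings that are all $0$ past some point are just identified with their truncations. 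Because the induction is finite and $0$-extensions change nothing, there is no limit stage at all, and the admissibility bookkeeping you flag as a ``secondary subtlety'' becomes automatic. If you modify your recursion so that $\A_{\sigma^\frown 0}=\A_\sigma$ and strengthen (i) to $\equiv_{g(\alpha)}$ via the $\theta_0,\theta_1$ device, you essentially recover the paper's proof.
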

\begin{proof}
Let $X$ be such that $\a<\om_1^X$, and such that $g$ looks computable according to $X$ (which exists by Theorem \ref{thm: looks computable}).
For each $\si\in\fseq{\a}$ we will define a structure $\A_\si$ such that $\om_1^X=\om_1^{\A_\si}\leq SR(\A_{\si})$.
We define the structures $\A_\si$ by induction on the number of 1's in $\si$.
For $\si$ the $\a$-string of all $0$s, let $\A_\si$ be any structure with $\om_1^X=\om_1^{\A_\si}\leq SR(\A_{\si})$ which we know exists using that $T$ is unbounded and Lemma \ref{thm: effectively unbounded}.

Suppose now that we have $\si\in\fseq{\a}$ and we need to define $\A_\si$.
Let $\xi<\a$ be the largest with $\si(\xi)=1$, and let $\si^-$ be defined be making that `1' into a `0', that is, $\si^-(\gamma)=\si(\gamma)$ if $\g\neq\xi$ and $\si^-(\xi)=0$.
By induction, we can assume that we have already defined $\A_{\si^-}$ of high Scott rank, and that we have a presentation computable in some $Y$ with $\om_1^Y=\om_1^X$.
We will define $\A_\si$ so that $\A_\si\equiv_{g(\xi)}\A_{\si^-}$, and $\A_\si\not\equiv_{g(\xi+1})\A_{\si^-}$.
Let $\theta _0$ be the conjunction of the $\Pico{Y}_{g(\xi)}$ and $\Sico{Y}_{g(\xi)}$ theories of $\A_{\si^-}$, and $\theta_1$ be the conjunction of the $\Pico{Y}_{3f(g(\xi)+1)}$ and $\Sico{Y}_{3f(g(\xi)+1)}$  theories of $\A_{\si^-}$.
Lemma \ref{le: computable bf} then implies that for any $\B\models \theta_0\wedge\neg\theta_1$ we have $\B\equiv_{g(\xi)}\A_{\si^-}$, and $\B\not\equiv_{g(\xi+1})\A_{\si^-}$ (we are using here that $g(\xi)$ is a limit ordinal and hence that $3g(\xi)=g(\xi)$, and we are using that $3f(g(\xi)+1)<g(\xi+1)$).
We claim that $\theta_0\wedge\neg\theta_1$ is unbounded below $\om_1^X$.
Once we prove the claim, we can then use Lemma \ref{thm: effectively unbounded} to get a model $\A_\si$ of $\theta_0\wedge\neg\theta_1$ of high Scott rank with $\om_1^{\A_\si}=\om_1^X$.
To prove the claim, start by noticing that $\theta_0$ is $\Pii_{g(\xi)+1}$ and is unbounded below $\om_1^X$ as witnessed by $\A_{\si^-}$.
Hence, there is a $\Pii_{f(g(\xi)+1)}$  formula $\psi$ such that both $\theta_0\wedge \psi$ and $\theta_0\wedge\neg\psi$ are unbounded below $\om_1^X$. 
For any model $\B\models \theta_0\wedge\neg\psi$ we have $\A_{\si^-}\not\equiv_{f(g(\xi)+1)} \B$, and hence, by Lemma \ref{le: computable bf}, $\B\not\models \theta_1$.
It follows that since $\theta_0\wedge\neg\psi$ is unbounded below $\om_1^X$, so is $\theta_0\wedge\neg\theta_1$ proving the claim.
Finally, using Lemma \ref{thm: effectively unbounded} again, let $\A_\si$ be a model of $\theta\wedge \neg\theta_1$ of high Scott rank with $\om_1^{\A_\si}=\om_1^X\leq SR(\A_\si)$.

To see that we have built a $g$-proper $\a$-tree consider $\tau,\rho\in \fseq{\a}$, and let $\xi$ be the least with $\tau(\xi)\neq\rho(\xi)$.
Suppose $\tau(\xi)=0$ and $\rho(\xi)=1$.
Let $\si$ be $\rho\upto \xi+1$ followed by 0's, and $\si^-$ be $\tau\upto \xi+1$ followed by 0's.
From the construction we get that $\A_\tau\equiv_{g(\xi+1)}\A_{\si^-} \not\equiv_{g(\xi+1)} \A_\si \equiv_{g(\xi+1)} \A_\rho$ as needed.
\end{proof}

We are now ready to use an overspill argument.

\begin{lemma}(ZFC+PD) \label{le: alpha star tree}
Suppose that there is a $g\colon\om_1\to\om_1$ such that for every $\a$ there is a $g$-proper $\a$-tree of models of $T$.
Then, relative to every oracle $X$ on a cone, there is an $X$-computable, proper, $\a^*$-friendly $\a^*$-tree of models of $T$ for some $\a^*\in \H^X\sminus \om_1^X$.
\end{lemma}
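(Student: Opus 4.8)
The plan is to obtain the non-standard tree by an overspill argument over the Harrison linear ordering $\H^X$, in exactly the style already used in the proof of Theorem \ref{thm: tree to top}, but now applied to the construction from Lemma \ref{le: alpha tree} instead of to Theorem \ref{thm: app}. First I would fix an oracle $X$ high enough so that $g$ looks computable according to $X$ (possible on a cone by Theorem \ref{thm: looks computable}), and so that the parameters used in Lemma \ref{le: alpha tree} (the choice of a base structure of high Scott rank with $\om_1^X=\om_1^{\A}$, and the various $\Pic$/$\Sic$-theories) can be computed, as oracles, with $\om_1$ unchanged — i.e.\ relativize everything to $X$ and work as if $X=0$. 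Then the $X$-computable linear ordering $\a^*$ witnessing that $g$ looks computable has a well-founded initial segment equal to $\om_1^X$ (a Harrison ordering relative to $X$), and $X$ computes an extension $f^X\colon\a^*\to\a^*$ of $f\upto\om_1^X$, hence (iterating as in the definition of $g$) an extension $g^X\colon\a^*\to\a^*$ of $g\upto\om_1^X$.

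The key step is to define, for $\b\in\a^*$, the $\Si^1_1(X)$ predicate $P(\b)$ asserting: ``there is an $X$-computable sequence of structures $\{\A_\si:\si\in\fseq{\b}\}$, each a model of $T$, together with a sequence of relations coding the $\leq_\xi$ back-and-forth relations on this family for all $\xi<\b$, such that the family is $\b$-friendly and $g^X$-proper below $\b$.'' The quantifier over the $X$-computable index of the sequence, plus the quantifier over the witnessing back-and-forth relations (which, as recalled in the excerpt for $\a\in\H\sminus\om_1^{CK}$, is $\Si^1_1$), keeps $P$ at the level $\Si^1_1(X)$; crucially ``$g^X$-proper'' is arithmetic in the back-and-forth relations and in $g^X$, so no $\Pi^1_1$ quantifier sneaks in. Lemma \ref{le: alpha tree} (relativized to $X$), together with Theorem \ref{thm: app}-style effectivization of that construction, gives that $P(\b)$ holds for every $\b<\om_1^X$: the construction in Lemma \ref{le: alpha tree} is carried out by induction on the number of $1$'s, each structure chosen by Barwise compactness and Gandy's basis over an admissible set, and one checks it can be done uniformly $X$-computably for $\b<\om_1^X$. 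Hence $P$ contains all of the well-founded part $\om_1^X\subseteq\H^X$, and by overspill there is a non-standard $\a^*_0\in\H^X\sminus\om_1^X$ — which we may take to be our $\a^*$, or rather an initial segment of it — with $P(\a^*)$ true. The witnessing $X$-computable family $\{\A_\si:\si\in\fseq{\a^*}\}$ is then $X$-computable, $\a^*$-friendly, and satisfies the ``$g$-proper'' clause for all $\xi<\a^*$, in particular for all $\xi<\om_1^X$; by the remark following the definition of $g$-proper, this forces the tree to be genuinely proper, since $\si\upto\om_1^X=\tau\upto\om_1^X$ yields $\A_\si\equiv_{\om_1^X}\A_\tau$ hence $\A_\si\isom\A_\tau$, while $\si\upto\om_1^X\neq\tau\upto\om_1^X$ means they differ at some $\xi<\om_1^X$, whence $\A_\si\not\equiv_{g(\xi)}\A_\tau$ so $\A_\si\not\isom\A_\tau$.

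The main obstacle I anticipate is bookkeeping the complexity so that $P(\b)$ really is $\Si^1_1$ and not $\Pi^1_1$: ``proper'' as originally stated is $\Pi^1_1$, which is precisely why the intermediate notion of $g$-proper was introduced, and I must be careful that the overspill predicate uses only $g$-properness (a $\Delta^1_1(g^X)$ condition once the back-and-forth relations are in hand) and not properness itself, deferring the upgrade from $g$-proper to proper to the usual ``$\om_1^{CK}$-back-and-forth equivalence implies isomorphism'' argument after overspill. A secondary technical point is verifying that the recursion defining the $\A_\si$ in Lemma \ref{le: alpha tree} — which at each step invokes Barwise compactness and Gandy's basis theorem to locate a model of a $\Sic{Y}_{\cdot}$-theory of high Scott rank with $\om_1$ preserved — can be threaded through uniformly in an $X$-computable index for $\fseq{\b}$, so that the resulting sequence $\{\A_\si\}$ is genuinely $X$-computable and the whole construction lifts past $\om_1^X$ along $\a^*$; but this is the same phenomenon that underlies Theorem \ref{thm: app} and the overspill in Theorem \ref{thm: tree to top}, so it should go through by the arguments already cited.
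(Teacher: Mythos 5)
There is a genuine gap in your argument, concentrated in the sentence ``Lemma \ref{le: alpha tree} (relativized to $X$), together with Theorem \ref{thm: app}-style effectivization of that construction, gives that $P(\b)$ holds for every $\b<\om_1^X$.'' Lemma \ref{le: alpha tree} is a pure existence statement: it tells you that for each $\a$ a $g$-proper $\a$-tree \emph{exists}, but the structures $\A_\si$ in that construction are plucked out of thin air by Barwise compactness and Gandy's basis theorem, applied once per $\si$, with no effective or uniform way to locate them. There is no reason a single oracle $X$ should compute a $g$-proper $\b$-friendly $\b$-tree for \emph{every} $\b<\om_1^X$, and ``one checks it can be done uniformly $X$-computably'' is precisely the claim that would have to be proved. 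Theorem \ref{thm: app} is also not the right tool here: it builds one computable copy of $\A_\si$ given a code for $\si$ against a \emph{given} friendly tree; it says nothing about effectivizing the choice of the tree itself.

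The paper's proof dodges this by never trying to effectivize the construction from Lemma \ref{le: alpha tree}. Instead it (i) picks, for each $\a<\om_1^Z$, \emph{some} $g$-proper $\a$-tree (nonconstructively), and takes a countable join $Y$ of reals computing them; (ii) passes to any $X$ computing everything hyperarithmetic in $Y$, which buys $\a$-friendliness; and then — this is the step your argument is missing — (iii) observes that the set of $X$ computing a $g$-proper $\a$-friendly $\a$-tree for all $\a<\om_1^Z$ is $\Si^1_1(Z)$ and nonempty, so by Gandy's basis theorem one may take such an $X$ with $\om_1^X=\om_1^Z$. This last equality is essential: a naive choice of $X$ in step (ii) would have $\om_1^X>\om_1^Z$, and then the $\Si^1_1(X)$ predicate ``$X$ computes a $g$-proper $\b$-friendly $\b$-tree'' would be verified only for $\b<\om_1^Z$, not for all $\b<\om_1^X$, so the overspill over $\H^X$ would not fire. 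Your draft gestures at this worry with the phrase ``with $\om_1$ unchanged,'' but offers no mechanism for achieving it; the Gandy basis step is that mechanism, and it needs to appear explicitly. The remainder of your argument (the $\Si^1_1$ bookkeeping for $P(\b)$, the overspill, and the upgrade from $g$-proper to proper via $\equiv_{\om_1^X}$-equivalence implying isomorphism) matches the paper and is fine.
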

\begin{proof}
By Theorem  \ref{thm: looks computable} there is a cone of oracles  according to which $g$ looks computable.
Using projective Turing determinacy (which follows from PD), all we need to do is show that the set of $X$ satisfying the thesis of the lemma is co-final in the Turing degrees.
So, given $Z$ we need to find $X\geqt Z$ with this property.
Assume that according to $Z$, $g$ looks computable.
By the hypothesis of the lemma, there is an $Y$ which computes a $g$-proper $\a$-tree of models of $T$ for each $\a<\om_1^Z$, which might not be $\a$-friendly.
But it just takes $2\a$-jumps over the model to compute all the $(<\a)$-back-and-forth relations.
Then, if $X$ computes every set hyperarithmetic in $Y$, it computes a $g$-proper $\a$-friendly $\a$-tree of models of $T$ for each $\a<\om_1^Z$.
The set of $X$ which,  for each $\a<\om_1^Z$,  compute a $g$-proper $\a$-friendly $\a$-tree is $\Si^1_1(Z)$, as the quantifier $\forall \a<\om_1^Z$ can be replaced by a second-order $\exists$-quantifier.
Thus, by Gandy's basis theorem, there is such an $X$ with $\om_1^X=\om_1^Z$.
Now, the set of $\b\in\H^X$ such that $X$ computes a  $g$-proper, $\b$-friendly $\b$-tree is $\Si^1_1(X)$, and contains $\om_1^X$.
By an overspill argument, every such $X$ computes an $g$-proper, $\a^*$-friendly $\a^*$-tree for some $\a^*\in\H^X\not\in\om_1^X$, as needed.
\end{proof}

%%%%%%%%%%%%%%%%%%

\subsection{Tying the loose ends} \label{ss: tying}

We can now put all the pieces together and prove Theorem \ref{thm:main}, that every uniformly effectively  dense theory is on top under effective reducibility relative to every oracle on a cone. 

\begin{proof}[Proof of Theorem \ref{thm:main}]
Let $T$ be uniformly effectively dense witnessed by $f$.
By Lemma \ref{le: alpha tree}, we have that for every  $\a\in\om_1$, a $g$-proper $\a$-tree of models of $T$ exists, where $g$ is defined by iterating $f$.
Then, by Lemma \ref{le: alpha star tree}, we have that relative to every oracle $X$ on a cone, there is an $X$-computable proper, $\a^*$-friendly $\a^*$-tree of models of $T$ for some $\a^*\in \H^X\sminus \om_1^X$.
Finally, we apply Theorem \ref{thm: tree to top} to get that $T$ is on top under effective reducibility relative to every such $X$.
\end{proof}

%%%%%%%%%%%%%%%%%%%%%%%%%%%%%%%%%%%%%%%%%%%%%%%%%%%%%%%%%%%%%%%%%%%%%%%%%%%%%%%%%%%%%%%%%%%%%%%%%%%%%%%%%%%%%%%%%%%%%%%%%%%%%%%%%%%%%%%%%%%%%%%%%%%%%%%%%%%%%%%%%%%%%%%%%%%%%%%%%%%%%%%%%%%%%%%%%%%%%%%%%%%%%%%%%%%%%%%%%%%%%%%%%%%%%%%%%%%%%%%%%%%%%%%%%%%%%%%%%%%%%%%%%%%%%%%%%%%%%%%%%%%%%%%%%%%%%%%%%%%%%%%%%%%%%%%%%%%%%%%%%%%
\section{Case Study: Linear orderings}

In this section, we prove that the theory of linear orderings has the no-intermediate-extension property.
This implies that it satisfies Vaught's conjecture  by Theorem \ref{thm: becker}.
The first step in this proof is to show that if we have a computable linear ordering $\L$ of high Scott rank, then we can write it as $\sum_{q\in \QQ}\B_q$ where  each $\B_q$ has high Scott rank.
The following step is to replace each linear ordering $\B_q$ by another $\hat{\B_q}$ that is $\a$-equivalent, to get a linear ordering $\hat{\L}$ that is $\a$-equivalent to $\L$ and has certain desired properties.
What we are using here is the following property.

\begin{lemma}\label{le: equivalent sums}
If for all $i\in \C$ (where $\C$ is a linear orderings) we have linear orderings $\A_i\equiv_\a\B_i$, then $\sum_{i\in \C}\A_i\equiv_\a\sum_{i\in\C}\B_i$.
\end{lemma}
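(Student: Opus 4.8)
The plan is to prove the statement by transfinite induction on $\a$, using the back-and-forth characterization of $\equiv_\a$ recalled in the Background section: to show $\sum_{i\in\C}\A_i\leq_\a\sum_{i\in\C}\B_i$, it suffices to show that given any finite tuple $\dbar$ from $\sum_{i\in\C}\B_i$ and any $\g<\a$, there is a finite tuple $\cbar$ from $\sum_{i\in\C}\A_i$ with $(\sum_i\A_i,\cbar)\geq_\g(\sum_i\B_i,\dbar)$; and symmetrically for $\geq_\a$. Since the statement is symmetric in the $\A_i$'s and $\B_i$'s (the hypothesis $\A_i\equiv_\a\B_i$ is symmetric), it is enough to establish one direction of the back-and-forth at each stage, and then apply the inductive hypothesis.

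First I would set up notation so that an element of $\sum_{i\in\C}\A_i$ is a pair $(i,a)$ with $i\in\C$ and $a\in\A_i$, and similarly for $\sum_{i\in\C}\B_i$; the order compares first coordinates in $\C$, and second coordinates within a block. Given a tuple $\dbar$ from $\sum_{i\in\C}\B_i$, it meets only finitely many blocks, say those indexed by $i_1<_\C\cdots<_\C i_k$, and within block $i_j$ it is a tuple $\dbar_j$ from $\B_{i_j}$. Given $\g<\a$, I use the hypothesis $\A_{i_j}\equiv_\a\B_{i_j}$ — more precisely $\A_{i_j}\geq_\g\B_{i_j}$, which holds since $\g<\a$ — to pull back each $\dbar_j$ to a tuple $\cbar_j$ in $\A_{i_j}$ with $(\A_{i_j},\cbar_j)\geq_\g(\B_{i_j},\dbar_j)$, where I take $\cbar_j$ to have the same length as $\dbar_j$ and the same relative order (the back-and-forth relation respects this up to automorphism inside the block). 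Assembling the $\cbar_j$ into a tuple $\cbar$ in $\sum_{i\in\C}\A_i$ (placing $\cbar_j$ in block $i_j$), the order type of $\cbar$ inside $\sum_i\A_i$ matches that of $\dbar$ inside $\sum_i\B_i$, because the blocks are arranged identically along $\C$ and the within-block orders agree by construction.

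The remaining point is to verify $(\sum_i\A_i,\cbar)\geq_\g(\sum_i\B_i,\dbar)$ from the block-wise inequalities $(\A_{i_j},\cbar_j)\geq_\g(\B_{i_j},\dbar_j)$, which is itself a back-and-forth argument one level down: given $\ebar$ in $\sum_i\B_i$ and $\delta<\g$, $\ebar$ again meets finitely many blocks — some already among the $i_j$, some new — and in each block I either use $(\A_{i_j},\cbar_j)\geq_\delta(\B_{i_j},\dbar_j)$ (the downward step of the block-wise $\g$-back-and-forth, available since $\delta<\g$) to find a matching piece, or, for a new block $i$, I note that $\dbar$ and $\cbar$ have no elements there, so I must play inside $\B_i$ versus $\A_i$; here I invoke $\B_i\equiv_\a\A_i$, hence $\B_i\geq_\delta\A_i$, to find the response. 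One also has to handle new blocks that fall \emph{between} old blocks or at the ends: the key observation is that since $\C$ is the common index set, whichever block of $\C$ an element of $\ebar$ lands in, the corresponding block in $\sum_i\A_i$ is available in exactly the same position relative to $\cbar$. So the interleaving structure is automatically preserved, and the only ``local'' work happens inside individual summands, where the hypothesis applies.

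The main obstacle I anticipate is purely bookkeeping: carefully matching the finitely many blocks touched by the tuples on the two sides, keeping track of which blocks are ``old'' (already containing tuple elements) versus ``new'', and checking that the order relations across and within blocks are preserved when one assembles local back-and-forth moves into a global one. There is no genuinely hard idea — everything reduces to the fact that $\equiv_\a$ and $\leq_\a$ are congruences for the sum operation indexed by a fixed linear order — but the induction has to be stated cleanly (probably proving simultaneously that $\leq_\a$ is preserved, by the same block-wise argument) to make the nested back-and-forth quantifier alternation go through. I would also remark that the same proof gives the stronger statement that $\A_i\leq_\a\B_i$ for all $i$ implies $\sum_{i\in\C}\A_i\leq_\a\sum_{i\in\C}\B_i$, which is what is actually used when only a one-sided comparison is available.
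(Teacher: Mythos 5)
Your proof is correct, and the underlying idea (transfinite induction via blockwise back-and-forth moves) is the same as the paper's, but the paper organizes it differently: it first expands the language by adding unary predicates $U_i$ for $i\in\C$ that mark the individual blocks, proves the expanded structures $\a$-equivalent by transfinite induction, and then passes to the reduct, which can only preserve or improve back-and-forth equivalence. The point of the predicates is that the atomic type of any tuple in the expanded structure already records its block footprint, so every back-and-forth move that respects atomic types automatically respects blocks; this eliminates exactly the bookkeeping you flag in your third and fourth paragraphs about matching new and old blocks along $\C$. Your direct route works too, but there is one case that you should spell out in a full write-up and that you do not mention: a block $i$ that has already been touched (so $\cbar_i,\dbar_i\neq()$) but in which the opponent chooses not to play in the current round. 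For such a block you recorded, say, $(\A_i,\cbar_i)\geq_\g(\B_i,\dbar_i)$ one round earlier, yet now you need $(\A_i,\cbar_i)\leq_\delta(\B_i,\dbar_i)$ for the $\delta<\g$ under consideration; this holds because $\geq_\g$ at matching lengths implies $\equiv_\delta$ for all $\delta<\g$ (take the empty tuple in the back-and-forth characterization), but it deserves an explicit line. The same observation is what makes your closing remark correct: $\A_i\leq_\a\B_i$ on structures already yields $\A_i\equiv_\delta\B_i$ for every $\delta<\a$, which is what the untouched interior blocks need, so the one-sided version $\A_i\leq_\a\B_i\,(\forall i)\Rightarrow\sum_i\A_i\leq_\a\sum_i\B_i$ does follow by the same induction.
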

\begin{proof}
Add to the linear orderings $\sum_{i\in \C}\A_i$ and $\sum_{i\in\C}\B_i$ unary relations $U_i$, one for each $i\in \C$, identifying the segment that corresponds to either $\A_i$ or $\B_i$.
It is straightforward to show that these two structures in this new language are $\a$-equivalent (by transfinite induction on $\a$ using the back-and-forth definition of $\equiv_\a$).
But then, forgetting about these new relations, we get that the linear orderings are $\a$-equivalent.
\end{proof}

To get the decomposition of $\L$ as mentioned above, the main idea is to consider the following convex equivalence relation on a linear ordering.

\begin{definition}
Given a linear ordering and an ordinal $\a$, we define a binary relation $\sim_\a$ on $\L$ given by: for $a<b\in \L$ let
\[
a\sim_\a b  \iff SR((a,b)_\L) < \a,
\]
where $(a,b)_\L$ is the open segment $(a,b)$ inside $\L$.
\end{definition}

The idea of considering this equivalence relation is similar to ideas of Kach and Montalb\'an when they were thinking  Vaught's conjecture for Boolean algebras.
That question is still open.
It is also open whether an analog of Lemma \ref{le: SR of omega sum} holds for Boolean algebras.

%%%%%%%%%%%%%%
\subsection{Basic results on Scott ranks of linear orderings}

To prove the basic results about $\sim_\a$, we need a few lemmas that will help us compute the Scott ranks of various linear orderings. 
Most of the bounds in these lemmas are probably not sharp, but are enough for our purposes.

We will repeatedly use the fact that $(\A,a_1,...,a_k)\leq_\xi(\B,b_1,...,b_k)$, where $\A= \A_0+\{a_1\}+\A_1+\{a_2\}+\cdots+\{a_k\}+\A_k$ and $\B_0+\{b_1\}+\B_1+\{b_2\}+\cdots+\{b_k\}+\B_k$, if and only if $\A_i\leq_\xi\B_i$ for each $i\leq k$ (see \cite[Lemma 15.7]{AK00}).
It follows that the $\Pii_\a$-type of a tuple $(a_1,...,a_k)$ in $\A$, is determined by the $\Pii_\a$-theories of the $\A_i$ for $i=0,...,k$.

\begin{lemma}\label{le: SR sum 1}
For two linear orderings $\A$, $\B$, 
\[
\max\{SR(\A),SR(\B)\} \leq SR(\A+1+\B)\leq \max\{SR(\A),SR(\B)\} +3.
\] 
\end{lemma}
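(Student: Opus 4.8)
The plan is to prove both inequalities in Lemma~\ref{le: SR sum 1} by relating back-and-forth relations on $\A+1+\B$ to those on $\A$ and on $\B$ separately, using the key decomposition fact cited just before the statement: for tuples, $(\A,\abar)\leq_\xi(\B',\bbar)$ in a sum-with-marked-points splits as a conjunction of $\leq_\xi$-comparisons between the corresponding summands.

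For the lower bound $\max\{SR(\A),SR(\B)\}\leq SR(\A+1+\B)$, I would argue contrapositively: if $SR(\A+1+\B)=\a$, I show $SR(\A)\leq\a$ (and symmetrically for $\B$). Given $\abar,\abar'\in\A^{<\om}$ with $\abar\leq_\a\abar'$ in $\A$, consider the tuples $\abar$ and $\abar'$ together with the marked midpoint $c$ (the ``$1$'') inside $\A+1+\B$. Using the decomposition fact, $\abar\leq_\a\abar'$ in $\A$ implies that the corresponding tuples (padded with $c$) are $\a$-back-and-forth equivalent in $\A+1+\B$ — the $\B$-part and the marked point contribute identically on both sides. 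Since $SR(\A+1+\B)=\a$, these tuples are automorphic in $\A+1+\B$; any such automorphism must fix $c$ (it is the unique point with the appropriate first-order-definable ``cut type'' separating a copy of $\A$-ish initial segment from the rest — more carefully, it is the unique point realizing its $1$-type once we observe automorphisms preserve the convex pieces), hence restricts to an automorphism of $\A$ carrying $\abar$ to $\abar'$. This gives $r_\A(\abar)\leq\a$ for all $\abar$, so $SR(\A)\leq\a$.

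For the upper bound, set $\gamma=\max\{SR(\A),SR(\B)\}$ and show $SR(\A+1+\B)\leq\gamma+3$. Take any tuple $\dbar$ in $\A+1+\B$; it partitions as $\abar$ in $\A$, possibly the midpoint $c$, and $\bbar$ in $\B$. Suppose $\dbar\leq_{\gamma+3}\dbar'$ for another tuple $\dbar'$. First I would check that a $(\gamma+3)$-back-and-forth equivalence forces $\dbar'$ to have the same ``shape'' — same number of coordinates in the left summand, whether $c$ is included, same number in the right summand — and in particular that $c$ is matched to $c$; this is where a small constant like $3$ (rather than $0$) is spent, since locating $c$ and detecting which summand a point lies in costs a bounded number of alternations. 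Then the decomposition fact yields $\abar\leq_{\gamma+3}\abar'$ in $\A$ and $\bbar\leq_{\gamma+3}\bbar'$ in $\B$; since $\gamma+3>SR(\A),SR(\B)$ actually I only need $\leq_\gamma$, and $\gamma\geq SR(\A)$ gives $r_\A(\abar)\leq\gamma$, wait — I should be careful: I need $\abar\leq_\gamma\abar'$, which follows since $\gamma+3\geq\gamma$ and $\leq_{\gamma+3}$ implies $\leq_\gamma$. Then $r_\A(\abar)\leq\gamma\le SR(\A)\le\gamma$ gives $\abar,\abar'$ automorphic in $\A$, similarly in $\B$, and assembling these automorphisms together with $c\mapsto c$ gives an automorphism of $\A+1+\B$ sending $\dbar$ to $\dbar'$. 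Hence $r_{\A+1+\B}(\dbar)\leq\gamma+3$ for every $\dbar$, so $SR(\A+1+\B)\leq\gamma+3$.

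The main obstacle I anticipate is the bookkeeping around ``detecting the midpoint and the summands'': I must verify that $\leq_{\gamma+3}$ (equivalently a winning strategy through $\gamma+3$ many rounds with the appropriate type-inclusion at the leaves) genuinely pins down which convex block each point of the matched tuple lives in, and matches $c$ to $c$. This is a finitary argument — the statement ``$x$ lies strictly between the $\A$-part and the $\B$-part,'' or ``$x$ is the immediate whatever,'' is expressible with a bounded number of quantifier alternations — but one has to confront the case analysis (coordinates landing in $\A$, at $c$, in $\B$; the degenerate cases where $\A$ or $\B$ is empty or finite) and confirm the constant $3$ suffices; as the statement notes, the bound is surely not sharp, so I would not optimize. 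I would also double-check the lower-bound claim that automorphisms of $\A+1+\B$ must fix $c$ and preserve the two convex halves — this follows because the ``$\A$-side'' and ``$\B$-side'' are the two maximal convex pieces on either side of the unique point $c$ whose removal is forced, and automorphisms preserve convexity and betweenness; in degenerate cases (e.g. $\A$ empty) the argument simplifies. Once these shape-preservation facts are in hand, the rest is a routine application of Lemma~15.7 of \cite{AK00} and the definition of Scott rank.
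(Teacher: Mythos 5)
Your lower bound is essentially the paper's argument, restated contrapositively; both hinge on the decomposition fact from \cite[Lemma 15.7]{AK00} that $(\A;\abar)\leq_\xi(\A;\abar')$ iff $(\A+1+\B;\abar,c)\leq_\xi(\A+1+\B;\abar',c)$. One small simplification: you do not need the parenthetical argument that $c$ is ``the unique point realizing its $1$-type'' — since $c$ occurs in both tuples at the same coordinate, \emph{any} automorphism of $\A+1+\B$ carrying $\abar c$ to $\abar' c$ fixes $c$, hence preserves the convex pieces $\A$ and $\B$ setwise. (You should also be slightly careful with the strict inequality: $SR(\A+1+\B)=\a$ gives $r_{\A+1+\B}(\abar c)<\a$, and what you should conclude is $r_\A(\abar)\le r_{\A+1+\B}(\abar c)<\a$; your phrasing ``$r_\A(\abar)\le\a$ for all $\abar$, so $SR(\A)\le\a$'' is off by one as written, but the fix is immediate.)

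The upper bound, however, has a genuine gap, and it is exactly at the step you flagged as ``the main obstacle.'' The shape-preservation claim — that $\dbar\leq_{\gamma+3}\dbar'$ forces $\dbar$ and $\dbar'$ to have coordinates in the same summands and forces $c$ to be matched to $c$ — is simply false. Take $\A=\B=\ZZ$. Then $\A+1+\B\isom\ZZ$ is homogeneous, so $c$ is not distinguished by \emph{any} amount of back-and-forth: the tuples $(c)$ and $(a)$ for $a\neq c$ satisfy $(c)\equiv_\xi(a)$ for every $\xi$, yet one contains the midpoint and the other does not. More generally, after splitting $\A+1+\B$ at the coordinates of $\dbar$ and $\dbar'$, a matched pair of intervals $I_j$, $I_j'$ may be of different ``types'' (one entirely inside $\A$, the other straddling $c$, of the form $\A'+1+\B'$), so the intended reduction to $r_\A$ and $r_\B$ does not go through; you would be forced to bound the Scott rank of intervals of the form $\A'+1+\B'$, which is the original problem recurring. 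The constant $3$ does not buy you the shape, because no finite amount of quantifier depth can define the midpoint when it is not definable at all. The paper sidesteps this entirely via Scott sentences: since $SR(\A),SR(\B)\le\gamma$, each of $\A$ and $\B$ has a $\Pii_{\gamma+1}$ Scott sentence $\sigma_\A,\sigma_\B$, and then
\[
\exists y\;\bigl(\sigma_\A^{<y}\wedge\sigma_\B^{>y}\bigr)
\]
(where $\sigma_\A^{<y}$ is $\sigma_\A$ relativized to $\{x:x<y\}$) is a $\Sii_{\gamma+2}$ Scott sentence for $\A+1+\B$, giving $SR(\A+1+\B)\le\gamma+3$ with no case analysis and no need to locate $c$ canonically. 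If you want to keep a back-and-forth flavor, you should build a Scott family: each orbit of a tuple $\dbar c$ (with $c$ appended) is $\Pii_\gamma$-definable by the decomposition fact, and the orbit of $\dbar$ alone is then $\exists y\,(\ldots)$, giving a $\Sii_{\gamma+1}$ Scott family — but that is morally the same Scott-sentence argument.
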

\begin{proof}
Let as call $c$ the element in place of the `1' in $\A+1+\B$.
First, to show that $SR(\A) \leq SR(\A+1+\B)$ we observe for $\abar,\bbar\in \A^{<\om}$ and $\a\in\om_1$, we have that $(\A;\abar)\leq_\a(\A;\bbar)$ if and only if $(\A+1+\B;\abar,c)\leq_\a(\A+1+\B;\bbar,c)$.
For each $\a< SR(\A)$ we know that there are tuples $\abar,\bbar\in \A^{<\om}$ such that $\abar\leq_\a\bbar$ but $\abar\not\equiv_{\a+1}\bbar$ within $\A$ (as otherwise $\A$ would have Scott rank $\leq \a$ \cite{AK00}).
But then the same is true for $\abar c$ and $\bbar c$ within $\A+1+\B$, showing that $\a<SR(\A+1+\B)$.

The same way we can show that $SR(\B) \leq SR(\A+1+\B)$.

For the other direction, let $\a=\max\{SR(\A),SR(\B)\}$.
Then, each of $\A$ and $\B$ have a $\Pii_{\a+1}$ Scott sentence, and hence $\A+1+\B$ has a $\Sii_{\a+2}$ Scott sentence saying that there exists an element such that the linear ordering to the left satisfies the Scott sentence for $\A$, and the one to the right the sentence for $\B$.
It then follows that $SR(\A+1+\B)\leq \a+2$.
\end{proof}

\begin{corollary}
If $\a$ is a limit ordinal, then $\sim_{\a}$ is an equivalence relation on any linear ordering $\L$.
\end{corollary}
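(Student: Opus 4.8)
The plan is to verify the three clauses of an equivalence relation, with transitivity carrying all the content. Reflexivity and symmetry come from the standing conventions implicit in the definition: one sets $a\sim_\a a$ for every $a$ (the empty interval has Scott rank $0<\a$, using that a limit ordinal is positive), and for $a>b$ one reads ``$a\sim_\a b$'' as ``$SR((b,a)_\L)<\a$'', which makes $\sim_\a$ symmetric by fiat. So the substance is to show transitivity.

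Suppose $a,b,c\in\L$ are distinct with $a\sim_\a b$ and $b\sim_\a c$; we must show $a\sim_\a c$. Write $I(x,y)$ for the open interval of $\L$ lying strictly between $x$ and $y$, whichever is smaller being the left endpoint. The hypotheses say $SR(I(a,b))<\a$ and $SR(I(b,c))<\a$, and we want $SR(I(a,c))<\a$. The key claim is
\[
SR(I(a,c))\leq \max\{SR(I(a,b)),SR(I(b,c))\}+3 .
\]
To prove this, split on which of $a,b,c$ lies between the other two in $\L$. If $b$ lies between $a$ and $c$, then $I(a,c)=I(a,b)+1+I(b,c)$ (listing the three blocks in the order dictated by $\L$), and the upper bound of Lemma \ref{le: SR sum 1} applied to this decomposition gives the claim. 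If instead $a$ lies between $b$ and $c$, then $I(b,c)=I(b,a)+1+I(a,c)$, and the lower bound of Lemma \ref{le: SR sum 1} gives $SR(I(a,c))\leq SR(I(b,c))$, which is even stronger than the claim; the case in which $c$ lies between $a$ and $b$ is symmetric, using $I(a,b)=I(a,c)+1+I(c,b)$.

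Finally, since $\a$ is a limit ordinal and both $SR(I(a,b))$ and $SR(I(b,c))$ are $<\a$, their maximum is $<\a$, and hence $\max\{SR(I(a,b)),SR(I(b,c))\}+3<\a$ as well; combining this with the displayed inequality yields $SR(I(a,c))<\a$, i.e.\ $a\sim_\a c$. I do not expect any genuine obstacle here: the only points to be careful about are the case analysis on the middle element and the elementary fact that adding a finite ordinal on the right of an ordinal below $\a$ stays below $\a$ when $\a$ is a limit. The single nontrivial ingredient is Lemma \ref{le: SR sum 1}, which is exactly what was proved for this purpose.
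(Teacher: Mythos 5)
Your proof is correct and is exactly the paper's argument, just spelled out in more detail: the paper dispatches reflexivity and symmetry as obvious and cites Lemma \ref{le: SR sum 1} for transitivity, which is precisely the case analysis and interval decomposition you carry out.
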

\begin{proof}
Symmetry and reflexivity are obvious from the definition.
Transitivity follows from the lemma above.
\end{proof}

\begin{lemma}\label{le: SR sum}
For two linear orderings $\A$, $\B$,  
\[
SR(\A+\B)\leq \max\{SR(\A),SR(\B)\} \cdot 2+3.
\] 
\end{lemma}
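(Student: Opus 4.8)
\textbf{Proof plan for Lemma~\ref{le: SR sum}.}
The plan is to reduce the two-summand case to the one-element-in-the-middle case already treated in Lemma~\ref{le: SR sum 1}, by splitting off a single point from one of the summands. Write $\alpha=\max\{SR(\A),SR(\B)\}$, and suppose first that $\A$ has a least element $a_0$, so that $\A=1+\A'$ with $\A'$ a proper initial segment. Then $\A+\B = 1+(\A'+\B)$ is of the form handled by Lemma~\ref{le: SR sum 1} (with the roles of the summands as $\mathbf 1$ on the left and $\A'+\B$ on the right, and a degenerate empty left-hand piece), so $SR(\A+\B)\leq \max\{SR(\mathbf 1),SR(\A'+\B)\}+3$. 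It therefore suffices to bound $SR(\A'+\B)$; but $SR(\A')\leq SR(\A)$ since $\A'$ is an initial segment, and similarly any convex piece of $\A$ or $\B$ has Scott rank at most $\alpha$, so by induction on $\alpha$ one expects a bound like $SR(\A'+\B)\leq \alpha\cdot 2+c$ for a smaller constant, and feeding this into the displayed inequality gives the claimed $\alpha\cdot2+3$.

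The subtlety is that $\A$ need not have a least element (and likewise $\B$ need not have a greatest element), so the above decomposition is not always literally available. The way I would handle this is to argue directly, analogously to the proof of Lemma~\ref{le: SR sum 1}: use the fact, recorded just before that lemma, that if $\A=\A_0+\{a_1\}+\cdots+\{a_k\}+\A_k$ and $\B=\B_0+\{b_1\}+\cdots+\{b_k\}+\B_k$ then $(\A,\abar)\leq_\xi(\B,\bbar)$ iff $\A_i\leq_\xi\B_i$ for all $i$, and that the $\Pii_\xi$-type of a tuple in a linear ordering is determined by the $\Pii_\xi$-theories of the convex pieces it cuts out. Given a tuple $\cbar$ in $\A+\B$, it splits $\A+\B$ into convex pieces each of which is a convex piece of $\A$, a convex piece of $\B$, or a piece of the form $(\text{tail of }\A)+(\text{head of }\B)$; there is at most one piece of the last kind. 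For the pieces entirely inside $\A$ or inside $\B$, their Scott rank is $\leq\alpha$, so their $\Pii_{\alpha+1}$-theories (in fact a fixed $\Pii_{\alpha+1}$ Scott sentence for each) pin them down up to $\equiv_{\alpha+1}$, hence up to isomorphism when the ranks are witnessed. The only genuinely new object is the straddling piece $\A'+\B'$ where $\A'$ is a tail of $\A$ and $\B'$ a head of $\B$.

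So the real content is: \emph{a tail of $\A$ plus a head of $\B$ has Scott rank at most $\alpha\cdot 2+c$}. Here I would show that such a structure is determined up to $\equiv_{\alpha\cdot2+2}$ (say) by bounded data, by running a back-and-forth argument: in $\A'+\B'$, any tuple cuts the structure into convex pieces which are themselves convex pieces of $\A$ or $\B$ (all of Scott rank $\leq\alpha$) together with at most one straddling piece again; iterating, one sees that two straddling structures are $\equiv_{\alpha\cdot2+2}$ as soon as corresponding convex pieces are $\equiv_{\alpha+1}$ and a ``cut invariant'' ($\alpha$-back-and-forth type of the cut between the tail and the head) agrees, which is $\Pii_{\alpha+1}$-expressible, so the whole thing admits a $\Sii_{\alpha\cdot2+2}$ or $\Pii_{\alpha\cdot2+2}$ Scott sentence. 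Combining: $\A+\B$ gets a $\Sii_{\alpha\cdot2+3}$ Scott sentence (existentially quantify the cut between the $\A$-part and the $\B$-part, assert a $\Pii_{\alpha+1}$ Scott sentence on each side), whence $SR(\A+\B)\leq\alpha\cdot2+3$.

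\textbf{Main obstacle.} The bookkeeping in the straddling-piece analysis is where the real work lies: one must verify that the relevant Scott sentences can be written at the advertised level (no hidden extra quantifier alternations when one allows the cut point to be realized as a Dedekind cut rather than an element), and that the constant $+3$, rather than something slightly larger, survives. The lower bound $\max\{SR(\A),SR(\B)\}\leq SR(\A+\B)$ is routine, exactly as in Lemma~\ref{le: SR sum 1}: pass from witnessing tuples $\abar\leq_\beta\bbar$, $\abar\not\equiv_{\beta+1}\bbar$ in $\A$ to the same tuples in $\A+\B$ and use the convex-pieces criterion.
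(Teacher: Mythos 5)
The paper's proof proceeds by a case split that your plan omits, and I think this omission is a genuine gap rather than mere bookkeeping. The paper distinguishes: (Case 1) there exist $x\in\A$, $y\in\B$ with $\A_{<x}\isom\A+\B_{<y}$; (Case 2) there are no such $x,y$. In Case~2, the ``$\A$-cut'' $\{z\in\A+\B : (\A+\B)_{<z}\isom\A_{<x}\text{ for some }x\in\A\}$ is $\Sii_{\alpha+1}$-definable and equals $\A$ exactly, so orbits in $\A+\B$ are defined by relativizing orbit definitions inside $\A$ or $\B$ to this set (together with its Scott sentence), giving $SR(\A+\B)\leq\alpha\cdot 2+3$. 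In Case~1, the self-similarity is used to write $\A+\B\isom\A_{<x}+1+(z,x)_\A+1+\B_{>y}$ where all three pieces have Scott rank $\leq\alpha$, and Lemma~\ref{le: SR sum 1} gives $SR(\A+\B)\leq\alpha+2$.

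Your plan's key step is to ``existentially quantify the cut between the $\A$-part and the $\B$-part, assert a $\Pii_{\alpha+1}$ Scott sentence on each side,'' and earlier to use a ``cut invariant, $\Pii_{\alpha+1}$-expressible.'' But in $L_{\infty\omega}$ over linear orderings there is no quantifier over Dedekind cuts; one can only quantify over elements. The way to make a cut ``expressible'' is precisely what the paper does: say that $z$ belongs to the $\A$-part via a $\Sii_{\alpha+1}$ formula asserting $(\A+\B)_{<z}$ satisfies a Scott sentence of a proper initial segment of $\A$. That formula, however, picks out $\A$ \emph{only} under the assumption that no $y\in\B$ also satisfies it, i.e.\ exactly the negation of Case~1. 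When the self-similarity of Case~1 holds, the $\A$-$\B$ boundary is not an intrinsic invariant of $\A+\B$ at all (there may be automorphisms moving it), your ``cut invariant'' is not well-defined as an invariant of the structure, and the Scott-sentence construction you sketch cannot be carried out as stated. So a separate argument for the self-similar case is needed, and your plan does not contain one. The first paragraph of your plan (peel off a least element of $\A$, reduce to $SR(\A'+\B)$) is also circular: it trades the original problem for the same problem with a new pair, and the appeal to ``induction on $\alpha$'' with a ``smaller constant'' has no basis, since nothing in that step decreases $\alpha$ or the constant. Finally, note that the statement you are proving has no lower bound, so the last remark about $\max\{SR(\A),SR(\B)\}\leq SR(\A+\B)$ is not needed (and, without a separating element, is itself less immediate than you suggest).
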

\begin{proof}
Let $\a=\max\{SR(\A),SR(\B)\}$.

First, suppose that there are some $x\in\A$ and $y\in \B$ such that $\A_{<x}\isom \A+\B_{<y}$.
Via this isomorphism we get a $z\in\A$ such that $(z,x)_\A\isom (x,y)_{\A+\B}$, which by the Lemma \ref{le: SR sum 1} we know has Scott rank $\leq \a$.
But then $\A+\B\isom \A_{<x}+1+(z,x)_\A + 1+ \B_{>y}$, all of which have Scott rank $\leq \a$, and  by Lemma \ref{le: SR sum 1}, $SR(\A+\B)\leq \a+2$.

Suppose now that for no $x\in\A$ and $y\in \B$ is $\A_{<x}\isom \A+\B_{<y}$.
We can then define the $\A$-cut within $\A+\B$ as the set of all $z\in \A+\B$ such that $(\A+\B)_{<z}$ is isomorphic to $\A_{<x}$ for some $x\in \A$.
This is a $\Sii_{\a+1}$ formula using that each $\A_{<x}$ has a $\Pii_{\a+1}$ Scott sentence. 
Now, to define an orbit in $\A+\B$ all we have to do is find its definition within either $\A$ or $\B$, and then relativize this definition to the Scott sentence of either $\A$ or $\B$, getting a $\a\cdot 2+2$ definition.
It follows that $SR(\A+\B)\leq\a\cdot 2+3$.
\end{proof}

The next lemma will become handy.

\begin{lemma}[Lindenbaum~\cite{Rose1982}] \label{thm: Lindenbaun}
If $\X,\Y$ are linear orderings such that $\X$ is isomorphic to an initial segment of $\Y$ and $\Y$ is isomorphic to an end segment of $\X$, then $\X\isom\Y$.
\end{lemma}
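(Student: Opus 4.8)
The plan is to reduce the statement to the Banach decomposition lemma underlying the Cantor--Schr\"oder--Bernstein theorem, and then to check that the decomposition it produces can be taken to respect the linear orderings. First, unravel the hypotheses: fix an embedding $f\colon\X\to\Y$ whose image is an initial segment $I$ of $\Y$, and an embedding $g\colon\Y\to\X$ whose image is an end segment $E$ of $\X$. (If $I=\Y$ or $E=\X$, or if $\X$ or $\Y$ is empty, the conclusion is immediate, so we may ignore these degenerate cases.) Since $f$ and $g$ are injections, Banach's lemma produces a partition $\X=C\sqcup D$ and $\Y=C'\sqcup D'$ with $f[C]=C'$ and $g[D']=D$; concretely, one takes $C=\bigcup_{n\ge 0}(g\circ f)^n[\X\sminus E]$, $D=\X\sminus C$, $C'=f[C]$, and $D'=\Y\sminus C'$.

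The heart of the argument is to verify that, for this particular $C$, the sets $C$ and $C'$ are \emph{initial} segments (equivalently, $D$ and $D'$ are end segments). This is exactly where the shapes of the images enter. First, $\X\sminus E$ is an initial segment of $\X$, because $E$ is an end segment. Next, if $S$ is an initial segment of $\X$, then $f[S]$ is an initial segment of $\Y$ (it is an initial segment of the initial segment $I$), hence $(g\circ f)[S]=g[f[S]]$ is an initial segment of $E$, and therefore $(\X\sminus E)\cup(g\circ f)[S]$ is again an initial segment of $\X$ (it consists of $\X\sminus E$ followed by an initial chunk of $E$, and $\X=(\X\sminus E)+E$). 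By induction the finite partial unions $\bigcup_{k\le n}(g\circ f)^k[\X\sminus E]$ are nested initial segments of $\X$, so their union $C$ is an initial segment of $\X$, and then $C'=f[C]$ is an initial segment of $\Y$.

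Finally, glue the pieces: $f$ restricts to an isomorphism of $C$ onto $C'$ and $g$ restricts to an isomorphism of $D'$ onto $D$, and because $C,C'$ are initial segments with complementary end segments $D,D'$, these two isomorphisms combine to an isomorphism $\X=C+D\isom C'+D'=\Y$. The step I expect to be the only real obstacle is the middle one: checking that Banach's decomposition can be chosen order-compatible (i.e.\ that the canonical $C$ above is an initial segment); the rest is the standard set-theoretic bookkeeping of Cantor--Schr\"oder--Bernstein. Note that nothing in this argument uses determinacy, effectivity, or choice beyond the given isomorphisms — the statement and its proof are purely combinatorial.
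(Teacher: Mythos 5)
The paper does not prove this lemma—it only cites it to Rosenstein's \emph{Linear Orderings}—so there is no in-paper argument to compare against. Your proof is correct, and it is in fact the standard Lindenbaum/Sierpi\'nski argument: you run the Banach decomposition underlying Cantor--Schr\"oder--Bernstein with $A_0 = \X \sminus g[\Y]$, and the only order-theoretic input is the observation that each finite stage $(\X\sminus E)\cup(g\circ f)[S]$ is again an initial segment because $f$ and $g$ map initial segments to initial segments of $I$ and $E$ respectively; the union of the resulting increasing chain of initial segments is an initial segment, and the two isomorphisms on complementary initial/end segments glue. All the steps check out, including the Banach identity $g[\Y\sminus f[C]]=\X\sminus C$ and the degenerate cases, which you correctly note are absorbed automatically.
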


\begin{lemma}\label{le: SR of omega sum}
If $SR(\L_{<x})<\a$ for all $x\in \L$, then $SR(\L)\leq \a+4$.
\end{lemma}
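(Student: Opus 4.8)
The plan is to bound $SR(\L)$ by writing $\L$ as a transfinite ascending union of its proper initial segments $\L_{<x}$, each of which has Scott rank $<\a$, and then arguing that the orbit of any tuple in $\L$ is already determined by ``local'' information together with one extra piece of data locating the tuple relative to this exhaustion. Concretely, fix a tuple $\abar=(a_1<\cdots<a_k)$ in $\L$. Pick any $x>a_k$; then $\abar$ lies inside the initial segment $\L_{<x}$, and $\L=\L_{<x}+\L_{\geq x}$. The first key step is to observe, using the ``concatenation lemma'' \cite[Lemma 15.7]{AK00} recalled just above Lemma~\ref{le: SR sum 1}, that the $\Pii_\xi$-type of $\abar$ in $\L$ is determined by the $\Pii_\xi$-theories of the pieces $\L_{<a_1}$, $(a_1,a_2)_\L$, \dots, $(a_{k-1},a_k)_\L$, $\L_{>a_k}$; all of these except the last are initial segments of $\L_{<x}$ and hence (by Lemma~\ref{le: SR sum 1} applied inductively, or directly from the hypothesis together with Lemma~\ref{le: SR sum 1}) have Scott rank $<\a$, so their isomorphism types — and a fortiori their $\Pi^{\mathrm{in}}_{<\a}$-types — are pinned down by $\Pii_{\a}$ formulas.

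The second key step handles the final piece $\L_{>a_k}$, which can have large Scott rank and is the main obstacle. The idea is that although $\L_{>a_k}$ itself is complicated, its \emph{position} inside $\L$ is not: either $\L$ has an ``$\abar$-cut'' — i.e.\ there is a definable-by-$\Sii_{\a+1}$-formula description, as in the proof of Lemma~\ref{le: SR sum}, of the set of $z\in\L$ with $\L_{<z}\isom\L_{<y}$ for some $y\leq a_k$ — or else, by Lindenbaum's Lemma~\ref{thm: Lindenbaun}, $\L_{>a_k}\isom\L$ itself (since $\L_{>a_k}$ is an end segment of $\L$ while $\L$ would be isomorphic to an initial segment of it). In the first case $a_k$ is located by a $\Sii_{\a+1}$ formula saying ``$x$ is the largest element of the $\abar$-cut'' (together with the finitely many $\Pii_{\a}$ formulas pinning the left pieces), and in the second case $\L_{>a_k}$ has a Scott sentence no more complex than one for $\L$, which lets the recursion close. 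Combining: each orbit of a $k$-tuple is defined by a $\Pii_{\b}$ formula for some fixed $\b\leq\a+3$ or so, and then Lemma~\ref{le: SR sum 1} (or rather the bookkeeping of how many quantifiers the cut definition plus the local Scott sentences cost) gives $SR(\L)\leq\a+4$.

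The third step is just the arithmetic of adding up the quantifier complexities: the left-hand pieces contribute $\Pii_{\a+1}$ Scott sentences, the cut is $\Sii_{\a+1}$, relativizing an orbit-definition to a cut and to the ambient Scott sentence costs a bounded number of further quantifier alternations, and one checks this all fits under $\a+4$; I would follow the template of Lemma~\ref{le: SR sum}'s proof essentially verbatim, being slightly more careful because there is no single ``$\B$'' but a whole $\om$-indexed (or longer) family of initial segments. The main obstacle is the case analysis around the cut: one must make sure that when no cut exists the self-similarity $\L_{>a_k}\isom\L$ really does let one avoid an infinite regress, and that the Lindenbaum step applies (it does, because $\L_{>a_k}$ is literally an end segment of $\L$ and, in the no-cut case, $\L$ embeds as an initial segment of $\L_{>a_k}$). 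A secondary subtlety is that $\a$ should be taken a limit ordinal (or one absorbs small finite errors into the ``$+4$''), so that bounds like $SR$ of a finite sum of rank-$<\a$ orderings staying $<\a$, coming from Lemma~\ref{le: SR sum 1}, actually hold.
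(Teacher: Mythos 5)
Your decomposition strategy — pinning down the $\Pii_\xi$-type of $\abar$ via the $\Pii_\xi$-theories of the pieces $\L_{<a_1}$, $(a_1,a_2)_\L$, \dots, $\L_{>a_k}$ — is a natural first move, but the crucial step, controlling the rightmost piece $\L_{>a_k}$, has a real gap. The cut-or-Lindenbaum dichotomy you propose is not exhaustive: it is simply false that when no cut localizes $a_k$, one gets $\L\isom\L_{>a_k}$. Take $\L=\ZZ\cdot\QQ$, which satisfies the hypothesis since every proper initial segment is isomorphic to $\ZZ\cdot\QQ+\om^*$, of small bounded Scott rank. All proper initial segments of $\L$ are mutually isomorphic, so the ``cut'' you define is all of $\L$ and locates nothing; yet $\L_{>a_k}\isom\om+\ZZ\cdot\QQ$ has a least element while $\L$ does not, so $\L\not\isom\L_{>a_k}$, and the Lindenbaum hypothesis that $\L$ embeds as an initial segment of $\L_{>a_k}$ also fails for the same reason. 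Even setting the counterexample aside, the no-cut branch as stated would be circular: you would be bounding the Scott complexity of $\L$ by that of something isomorphic to $\L$, which does not terminate any recursion. And in the ``cut exists'' branch, there is no reason for $a_k$ to be the largest element of the cut (in $\ZZ\cdot\QQ$ it is very far from it), so the cut does not locate $a_k$ either.

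The paper's proof takes a structurally different route that avoids describing $\L_{>a_k}$ at all. Rather than pinning down the global isomorphism type around the tuple, it shows that the automorphism orbit of a point $z$ in $\L$ is ``eventually local'': it finds a $b$ such that for every $u\geq b$ with $\L_{<u}\isom\L_{<b}$, the formula $\varphi_{z,u}$ defining the orbit of $z$ inside $\L_{<u}$ equals $\varphi_{z,b}$, and then it defines the orbit of $z$ in $\L$ by the $\Sii_{\a+2}$ formula $\Phi_b(w)$ asserting that for all sufficiently large $u$ with $\L_{<u}\isom\L_{<b}$, the structure $\L_{<u}$ satisfies $\varphi_{z,b}(w)$. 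The two cases concern how to find such a stabilizing $b$: either for cofinally many $x$ the set $\{y:\L_{<y}\isom\L_{<x}\}$ is bounded (in which case a bound $b$ works directly), or not, in which case Lindenbaum is applied to the bounded segments $[a_0,b_0)_\L$ and $[a_0,a)_\L$ — not to $\L$ and $\L_{>a_k}$ — to produce an isomorphism $\L_{<a}\isom\L_{<b_0}$ fixing $z$. To fix your argument you would need to shift to this ``eventual stability of local orbits'' framing; the piece-by-piece decomposition cannot work, because the orbit of $\abar$ genuinely depends on $\L_{>a_k}$ in ways your dichotomy does not capture.
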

\begin{proof}
The proof is divided in two cases.

Case 1: Suppose that for co-finally many $x\in \L$, the set $\{y\in \L: \L_{<y}\isom \L_{<x}\}$ is bounded above in $\L$.
Take $z\in \L$.
We will find a $b\in\L$ such that the following formula defines the automorphism orbit of $z$ within $\L$:
\begin{description}
\item[$\Phi_b(w)$] There exists $v>w$ such that $\L_{<v}\isom \L_{<b}$, and for every $u\geq v$ with $\L_{<u}\isom \L_{<b}$, we have that $\L_{<u}\models \varphi_{z,b}(w)$,
\end{description}
where $\varphi_{z,b}$ is the $\Pii_\a$ formula that defines the orbit of $z$ within $\L_{<b}$.
The formula $\Phi_b(w)$ is $\Sii_{\a+2}$ because checking ``$\L_{<v}\isom \L_{<b}$'' is $\Pii_\a$ (as a formula with one free variable $v$) using that $\L_{<b}$ has a $\Pii_{\a}$ Scott sentence. 
We now need to prove two things:
\begin{enumerate}
\item There is a $b$ such that for every $u\geq b$ with $\L_{<u}\isom \L_{<b}$ we have that $\varphi_{z,u}=\varphi_{z,b}$. \label{step 01}
\item For such $b$, the formula $\Phi_b(w)$ defines the orbit of $z$. \label{step 02}
\end{enumerate}
For  (\ref{step 01}) we chose $x>z$ and $b>z$ such that $\{y\in \L: \L_{<y}\isom \L_{<x}\}$ is bounded by $b\in \L$.
Take $u\geq b$ with $\L_{<u}\isom \L_{<b}$.
The supremum of the set $\{y\in \L: \L_{<y}\isom \L_{<x}\}$ determines a cut in both $\L_{<u}$ and $\L_{<b}$ which is invariant under automorphisms.
The right part of the cut within both $\L_{<u}$ and $\L_{<b}$ must then be isomorphic, and hence there is an isomorphism between $\L_{<u}$ and $\L_{<b}$ leaving the left part of the cut fixed.
This isomorphism leaves $z$ fixed, and hence $\varphi_{z,u}=\varphi_{z,b}$.
To show (\ref{step 02}) we observe in $\Phi_b(w)$ we can now replace $\L_{<u}\models \varphi_{z,b}(w)$ by $\L_{<u}\models \varphi_{z,u}(w)$.
It is clear now that $\Phi_b(z)$ holds, just because $\L_{<u}\models \varphi_{z,u}(z)$ holds for any $u$ and $z$, and hence any $w$ automorphic to $z$ satisfies $\Phi_b(w)$ too.
For the other direction, suppose now $\L\models \Phi_b(w)$ with witness $v$.
Since $\L_{<v}\models \varphi_{z,v}(w)$, there is an automorphisms of $\L_{<v}$ mapping $z$ to $w$.
This automorphism can now be extended to an automorphism of the whole of $\L$ mapping $z$ to $w$.

Case 2. Suppose we are not in any of the previous case.
Furthermore, suppose that for any $a$, $\L_{\geq a}$ does not satisfy the condition of case 1, as otherwise we would have $SR(\L_{>a})\leq \a+2$ and by Lemma \ref{le: SR sum 1} that $SR(\L)\leq \a+4$.

Take $z\in \L$; again, we will find a $b_0\in\L$ such that the formula $\Phi_{b_0}(w)$ defines the automorphism orbit of $z$ in $\L$.
Again, the main step is to find $b_0$ as in condition (\ref{step 01}) above.
The same proof we used for (\ref{step 02}) above would then show that $\Phi_{b_0}(w)$ indeed defines the automorphism orbit of $z$.

Let $a_0>z$ be such that $\{y\in \L: \L_{<y}\isom \L_{<a_0}\}$ is unbounded, which exists because we are not in case 1.
Let $b_0$ be such that $\L_{<b_0}\isom \L_{<a_0}$ and $\{y\in \L: [a_0,y)_\L\isom [a_0,b_0)_\L\}$ is unbounded;  such $b_0$ exists because otherwise, $\L_{\geq a_0}$ would satisfy the condition of case $1$.
Take $a>b_0$ with $\L_{<a}\isom \L_{<b_0}$.
To show that $\varphi_{z,a}=\varphi_{z,b_0}$ we will show that there is an isomorphism between $\L_{<a}$ and $\L_{<b_0}$ that leaves $z$ fixed.
Call $\L_{<a_0}=\A$, $[a_0,b_0)_\L=\B$ and $[a_0,a)_\L=\C$.
\[
\xymatrix@R=0.1pc{
\ar[rrrrrr] & |^z			&	{}^{a_0}| 	&	|^{b_0}	& |^a	&   & \L \\
\ar@{-)}_{\A}[rr]  & 	&   \ar@{-)}^{\B}[r] & & & & \\
				&	&   \ar@{-)}_{\C}[rr] & & & &
} \]
We will now prove that $\B\isom\C$, and thus that here is an isomorphism between $\A+\B$ and $\A+\C$ fixing $\A$, and hence fixing $z$. 
Clearly $\B$ is an initial segment of $\C$ (because $b_0<a$), but also note that $\C$ is  isomorphic to an initial segment of $\B$ because $\{y\in \L: [a_0,y)_\L\isom \B\}$ is unbounded, and hence there is such a $y>a$.
Via the isomorphism from $\L_{<b_0}$ to  $\L_{<a_0}$, the image of $a_0$ is some $b_1$ such that $[a_0,b_0)_\L\isom [b_1,a_0)_\L \isom\B$.
Via the isomorphism from $\L_{<a}$ to  $\L_{<a_0}$, the image of $a_0$ is some $a_1$ such that $[a_0,a)_\L\isom [a_1,a_0)_\L \isom\C$.
Then, either $a_1\leq b_1$ or $b_1\leq a_1$, so, either $\B$ is a finial segment of $\C$ or $\C$ is a final segment of $\B$.
In either case, by Lemma \ref{thm: Lindenbaun} we get that $\B$ and $\C$, as isomorphic, which is what we needed to get (\ref{step 01}).
\end{proof}

\begin{lemma}\label{le: quotient dense}
For a computable linear ordering $\L$ of high Scott rank, $\L/{\sim_{\om_1^{CK}}}$ is dense.
\end{lemma}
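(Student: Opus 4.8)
The plan is to argue by contradiction and reduce the density of $\L/{\sim_{\om_1^{CK}}}$ (write $\sim$ for $\sim_{\om_1^{CK}}$ below) to a statement about Scott ranks handled by the sum lemmas above. Suppose $\L/{\sim}$ is not dense, so there are $a<b$ in $\L$ whose $\sim$-classes $[a]<[b]$ are distinct and adjacent in $\L/{\sim}$. Then $a\not\sim b$, i.e.\ $SR((a,b)_\L)\geq\om_1^{CK}$, and by convexity of $\sim$-classes every $c$ with $a<c<b$ satisfies $c\sim a$ or $c\sim b$. Put $D=\{c\in(a,b)_\L:c\sim a\}=\{c\in(a,b)_\L:SR((a,c)_\L)<\om_1^{CK}\}$ and $E=\{c\in(a,b)_\L:c\sim b\}=\{c\in(a,b)_\L:SR((c,b)_\L)<\om_1^{CK}\}$. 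Then $D$ is an initial segment and $E$ an end segment of $(a,b)_\L$, and they are disjoint: if $c\in D\cap E$, writing $(a,b)_\L=(a,c)_\L+\{c\}+(c,b)_\L$ and applying Lemma \ref{le: SR sum 1} gives $SR((a,b)_\L)\leq\max\{SR((a,c)_\L),SR((c,b)_\L)\}+3<\om_1^{CK}$, contradicting $a\not\sim b$. Hence $(a,b)_\L=D\sqcup E$.

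It now suffices to show $SR(D)<\om_1^{CK}$ and $SR(E)<\om_1^{CK}$: granting this, Lemma \ref{le: SR sum} yields $SR((a,b)_\L)=SR(D+E)\leq\max\{SR(D),SR(E)\}\cdot 2+3<\om_1^{CK}$ (as $\om_1^{CK}$ is closed under ordinal arithmetic), contradicting $SR((a,b)_\L)\geq\om_1^{CK}$.

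To bound $SR(D)$ I would first note that $D$ is $\Delta^1_1$. Indeed, for computable structures one of the conditions ``$SR<\om_1^{CK}$'' and ``$SR\geq\om_1^{CK}$'' is $\Sigma^1_1$ and the other $\Pi^1_1$; so the description $D=\{c:SR((a,c)_\L)<\om_1^{CK}\}$ makes $D$ one of these, while $D=(a,b)_\L\sminus E=\{c\in(a,b)_\L:SR((c,b)_\L)\geq\om_1^{CK}\}$ makes it the other, whence $D$ is $\Delta^1_1$; similarly $E$ is $\Delta^1_1$. Since $D$ is an initial segment of $(a,b)_\L$, for $c\in D$ the proper initial segment $D_{<c}$ of $D$ equals $(a,c)_\L$, and these are uniformly computable structures of Scott rank $<\om_1^{CK}$. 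Because $D$ is $\Delta^1_1$, a $\Sigma^1_1$-boundedness argument gives an ordinal $\delta<\om_1^{CK}$ with $SR((a,c)_\L)<\delta$ for all $c\in D$; Lemma \ref{le: SR of omega sum} then yields $SR(D)\leq\delta+4<\om_1^{CK}$, and the same argument applied to the reverse ordering gives $SR(E)<\om_1^{CK}$. (One should also observe that $\L/{\sim}$ has more than one element, so that density is not vacuous: otherwise $\L$ is a single $\sim$-class, all of whose intervals have Scott rank $<\om_1^{CK}$, and the same boundedness argument applied to the uniformly computable family of proper initial segments of $\L$, together with Lemma \ref{le: SR of omega sum}, would force $SR(\L)<\om_1^{CK}$, contrary to hypothesis; hence $\L/{\sim}$ is an infinite dense linear ordering.)

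The main obstacle is the boundedness step: producing a single $\delta<\om_1^{CK}$ dominating all the Scott ranks $SR((a,c)_\L)$, $c\in D$. This genuinely uses that $D$ is $\Delta^1_1$ — for a merely $\Sigma^1_1$- or $\Pi^1_1$-indexed family of computable structures each of Scott rank $<\om_1^{CK}$ the ranks can be cofinal in $\om_1^{CK}$. The argument I have in mind fixes a hyperarithmetic oracle $Y$ with $D\leq_T Y$ and $\om_1^Y=\om_1^{CK}$, uses that for computable structures ``$SR(\X)\leq|z|$'' is $\Sigma^1_1$ (in $\X$ and $z$), and then, exploiting the two ($\Sigma^1_1$ and $\Pi^1_1$) descriptions of $D$, assembles a $\Sigma^1_1$ set of well-ordering codes witnessing all these Scott ranks at once, so that $\Sigma^1_1$-bounding for $WO$ applies. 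Everything else is a routine bookkeeping of the sum lemmas for Scott ranks.
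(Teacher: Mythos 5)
Your core argument matches the paper's proof essentially step for step: both proceed by contradiction from two adjacent $\sim_{\om_1^{CK}}$-classes, split the open interval between them into the piece $D$ (the paper's $\A$) of points close to the left endpoint and the piece $E$ (the paper's $\B$) of points close to the right endpoint, obtain a uniform bound below $\om_1^{CK}$ on the Scott ranks of the bounded subintervals via $\Sigma^1_1$-bounding, apply Lemma \ref{le: SR of omega sum} to each piece (to $E$ reversed), and close with Lemma \ref{le: SR sum}. Your observation that $D$ is $\Delta^1_1$ --- $\Sigma^1_1$ from its own description and $\Pi^1_1$ as the complement of $E$ --- is a useful fleshing-out of what makes the bounding step go through; the paper states that step more tersely.

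The one place where you genuinely diverge is the parenthetical argument that $\L/{\sim_{\om_1^{CK}}}$ has more than one class, and there you have a gap. If $\L$ were a single $\sim_{\om_1^{CK}}$-class, all you would know is that every \emph{bounded} open interval $(c,d)_\L$ has Scott rank $<\om_1^{CK}$; a proper initial segment $\L_{<x}$ is in general not a bounded interval (e.g.\ when $\L$ has no least element), so you cannot conclude $SR(\L_{<x})<\om_1^{CK}$ and hand the family $\{\L_{<x}\}$ to Lemma \ref{le: SR of omega sum}. The paper sidesteps this by running the density argument on $1+\L+1$: the two added endpoints fail to be $\sim_{\om_1^{CK}}$-equivalent because the interval between them is $\L$ itself, which has high Scott rank, so $(1+\L+1)/{\sim_{\om_1^{CK}}}$ has at least two classes, hence by density infinitely many, and therefore $\L/{\sim_{\om_1^{CK}}}$ also has more than one class. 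Alternatively, you can patch your version directly: fix any $a\in\L$, bound the Scott ranks of the bounded intervals $(a,x)_\L$ and apply Lemma \ref{le: SR of omega sum} to $\L_{>a}$ and (reversed) to $\L_{<a}$, then combine via Lemma \ref{le: SR sum 1}.
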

\begin{proof}
Suppose, towards a contradiction,  that $a_0$ and $a_1$ are in adjacent equivalence classes in $\L/{\sim_{\om_1^{CK}}}$.
That means that for every $x\in (a_0,a_1)_\L$, either $SR((a_0,x)_\L)<\om_1^{CK}$ or $SR((x,a_1)_\L)<\om_1^{CK}$.
By $\Si^1_1$ bounding, there is an $\a<\om_1^{CK}$ such that for every $x\in(a_0,a_1)_\L$ either $SR((a_0,x)_\L)<\a$ or $SR((x,a_1)_\L)<\a$.
Write $(a_0,a_1)_\L$ as $\A+\B$ where $\A$ consists of the $x$'s with $SR((a_0,x)_\L)<\a$ and $\B$ of the other ones.
By the previous lemma, we have that $SR(\A)\leq \a+4$, and, applying the previous lemma to $\B^*$, that $SR(\B)\leq \a+5$ too.
By Lemma \ref{le: SR sum}, we then have that $SR((a_0,a_1)_\L)\leq\a\cdot 2+11<\om_1^{CK}$, and thus that $a_0\sim_{\om_1^{CK}} a_1$, contradicting the assumption that they are in different equivalence classes.

To see that we must have more than one equivalence class, consider $1+\L+1$.
Since $SR(\L)\geq\om_1^{CK}$, the 1s at the extremes are not $\sim_{\om_1^{CK}}$-equivalent.
So $1+\L+1/{\sim_{\om_1^{CK}}}$ has more than one element and is dense by the previous paragraph.
So $\L/{\sim_{\om_1^{CK}}}$  must also have more than one equivalence class.
\end{proof}

%%%%%%%%%%%%%%%%%%%
\subsection{Vaught's conjecture for Linear orderings} \label{ss: VC LO}
In this section, we give a new proof of Rubin's theorem that the theory of Linear orderings satisfies Vaught's conjecture (in the sense that all extensions do) \cite{Rub74, Ste78}.

Consider a infinitary sentence $T$ in the language $\{\leq\}$, which extends the theory of linear orderings.
We may assume $T$ is given by a computably infinitary sentence, as we can always relativize the rest of the proof later.
If there is a bound on the Scott ranks of the models of $T$, we then have that the isomorphism problem among the reals coding models of $T$ is Borel (see \cite[Theorem 12.2.4]{GaoBook}).
Then, we can apply Silver's theorem \cite{Sil80}, which says that every Borel equivalence relation has either countably or continuum many equivalence classes, to get that $T$ has either countably or continuum many models.

Thus, let us assume that $T$ is unbounded, and hence it has a model $\L$ with $\om_1^{CK}=\om_1^L\leq SR(\L)$ (by Lemma \ref{thm: effectively unbounded}).
Let us also assume that $T$ has less than continuum many models.
Relativizing again, let us assume that $\L$ has a computable copy.

Let $\a$ be a limit ordinal be such that $T$ is $\Pic_\a$.
Take any countable linear ordering $\A$.
We will show that there is a linear ordering $\Lhat\equiv_\a\L$ such that $\Lhat/{\sim_{\om_1^{CK}}}\isom \A$, showing that there are continuum many models of $T$.
(Notice that if $\Lhat\equiv_\a\L$, then $\Lhat\models T$.)

By Lemma \ref{le: quotient dense}, $\L/{\sim_{\om_1^{CK}}}$ is dense.
By using an isomorphism between $\QQ$ and $\QQ\cdot\ZZ\cdot \A$, we can write 
\[
\L = \sum_{q\in\A}(\sum_{n\in\ZZ} \B_{q,n}),
\]
where each $\B_{q,n}$ is such that  $\B_{q,n}/{\sim_{\om_1^{CK}}}$ is still dense and hence  $SR(\B_{q,n})\geq\om_1^{CK}$.
Let $\a<\a_0<a_1<\a_2<....$ be a sequence of limit ordinals with limit $\om_1^{CK}$.
For each $q\in\A$ and $n\in\ZZ$, let $\Bhat_{q,n}$ be such that $\Bhat_{q,n}\equiv_{\a_{|n|}}\B_{q,n}$ and $\a_{|n|}\leq SR(\Bhat_{q,n})\leq \a_{|n|}+1$.
To build such a linear ordering $\Bhat_{q,n}$ one needs to construct a model of the $\Pic_{<\a_{|n|}}$-theory of $\B_{q,n}$, but omitting all the non-principal $\Pic_{<\a_{|n|}}$-types  that are realized in any model of $T$ (for the type omitting theorem see \cite[Theorem III.3.8]{Bar75}).
That there are only countably many such types follows from the fact that otherwise there would be continuum many, and hence there would be continuum many models of $T$, which we are assuming there are not.

Let $\Lhat = \sum_{q\in\A}(\sum_{n\in\ZZ} \Bhat_{q,n})$.
By Lemma \ref{le: equivalent sums} we then have that $\L\equiv_\a\Lhat$.
It is not hard to see that if we are given $b\in \Bhat_{q,n}$ and $c\in \Bhat_{p.m}$, then $b\sim_{\om_1^{CK}} c$ if and only if $p=q$.
So, for each $q\in\A$ we have that $(\sum_{n\in\ZZ} \Bhat_{q,n})$ is a single $\sim_{\om_1^{CK}}$ equivalence class and $\L/{\sim_{\om_1^{CK}}}\isom\A$.

%%%%%%%%%%%%%%%%%%%%
\subsection{Linear orderings are uniformly effectively dense}
We now give a proof of Theorem \ref{thm: linear no int} that the theory of linear orderings has the no-intermediate-extension property.
To prove it we use Theorem \ref{thm:main} and the following theorem.

\begin{theorem}
The theory of linear orderings is uniformly effectively dense.
\end{theorem}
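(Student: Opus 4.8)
The goal is to verify the two clauses in the definition of ``uniformly effectively dense'' for the theory $T_{\mathrm{LO}}$ of linear orderings: it is unbounded, and for every $\a\in\om_1$ there is $\b\in\om_1$ such that every effectively unbounded extension $\That\in\Pii_\a$ of $T_{\mathrm{LO}}$ admits a splitting formula $\psi\in\Pii_\b$ with both $\That\wedge\psi$ and $\That\wedge\neg\psi$ unbounded below $\om_1^{\That}$. Unboundedness of $T_{\mathrm{LO}}$ is classical (the Harrison linear ordering $\H$, or more simply well-orderings, give models of unbounded Scott rank), so the content is the uniform splitting. The plan is to run, relative to a model $\L$ of $\That$ of high Scott rank, exactly the decomposition developed in the previous subsections: by Lemma \ref{le: quotient dense} applied relative to the appropriate oracle, $\L/{\sim_{\om_1^{\That}}}$ is dense, and then, as in Subsection \ref{ss: VC LO}, write $\L=\sum_{q\in\QQ}\B_q$ where each $\B_q$ has high Scott rank and each $\B_q/{\sim}$ is still dense.

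The splitting formula $\psi$ should say something like ``the $\sim$-quotient has a first element'' (or ``is isomorphic to $1+$ a dense order,'' i.e. has a least $\sim$-class), together with a bookkeeping condition on the Scott rank of the initial $\sim$-class so that the formula lands at a bounded level. Concretely: given $\That\in\Pii_\a$ effectively unbounded, take $\L\models\That$ with $\om_1^{\That}=\om_1^{\L}\leq SR(\L)$. Using the dense $\sim$-quotient of $\L$, I would build two models $\L_0,\L_1\equiv_\b\L$ of high Scott rank, one in which the quotient begins with a ``small'' (bounded Scott rank, say of rank exactly some fixed $\a_0<\om_1^{\That}$) first $\sim$-class and one in which it does not — e.g. $\L_1=\Bhat+\L$ where $\Bhat$ is a linear ordering of bounded Scott rank forming its own $\sim$-class, versus $\L_0=\L$ reshuffled so the quotient is dense with no endpoint. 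Lemma \ref{le: equivalent sums} guarantees these remain $\b$-equivalent to $\L$ (hence models of $\That$, since $\That\in\Pii_\a$ and $\a\leq\b$), and the rank-bookkeeping from Lemmas \ref{le: SR sum 1}, \ref{le: SR sum}, \ref{le: SR of omega sum} shows the distinguishing property ``first $\sim$-class exists and has Scott rank $\leq\a_0$'' is expressible by a $\Pii_\b$ sentence $\psi$ for a $\b$ depending only on $\a$ (through the arithmetic of those rank bounds). Since $\That\wedge\psi$ is witnessed unbounded by the family $\{\L_1\}$ built at every level below $\om_1^{\That}$, and $\That\wedge\neg\psi$ by $\{\L_0\}$, both halves are unbounded below $\om_1^{\That}$.

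The main obstacle I expect is the uniformity: the bound $\b$ must be extracted as an explicit function of $\a$ alone, uniformly over all extensions $\That\in\Pii_\a$, and this forces care on two fronts. First, the formula $\psi$ must be written using only the Scott sentences of the bounded-rank pieces and the $\sim_{\om_1^{\That}}$-relation expressed at a fixed finite level above $\a$ — the rank estimates in Lemmas \ref{le: SR sum 1} and \ref{le: SR of omega sum} give additive constants ($+3$, $+4$, $+5$) and the factor-of-two in Lemma \ref{le: SR sum}, so one must check these compose to something like $\b=\a\cdot 2+c$ for an absolute constant $c$, independent of $\That$. Second, one must confirm that the models $\L_0,\L_1$ really can be produced below every $\g<\om_1^{\That}$; this is where effective unboundedness of $\That$ is used, feeding into Lemma \ref{thm: effectively unbounded} to extract models of high Scott rank and then performing the $\sum_{q\in\QQ}$ decomposition within the appropriate admissible set, just as in Subsection \ref{ss: VC LO}. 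Once those two points are pinned down, the theorem follows by combining the above with Theorem \ref{thm:main} to deduce Theorem \ref{thm: linear no int}.
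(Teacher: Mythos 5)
Your high-level plan — decompose $\L$ using the dense $\sim_{\om_1^{\That}}$-quotient, then build two models of $\That$ of high Scott rank that differ by a formula of bounded complexity — is the right shape, but the concrete construction you sketch has two significant gaps.

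\textbf{First, prepending is not covered by Lemma~\ref{le: equivalent sums}.} You set $\L_1 = \Bhat+\L$ with $\Bhat$ a new piece of bounded Scott rank, and then invoke Lemma~\ref{le: equivalent sums} to conclude $\L_1 \equiv_\b \L$. But that lemma is about replacing summands in a fixed sum $\sum_{i\in\C}$ by $\a$-equivalent summands indexed by the same $\C$; it says nothing about adding a new piece in front. In general $\Bhat+\L\not\equiv_\a\L$: even for $\a=2$ one has $1+\QQ\not\equiv_2\QQ$. So you cannot conclude $\L_1\models\That$, and the whole construction collapses. The paper avoids this by never adding pieces: it writes $\L=(1+\A_0+1+\A_1+\cdots)+(\cdots+\B_1+\B_0)$ and only ever \emph{replaces} the $\A_i$ by $\a$-equivalent orderings, keeping the tail of $\B_i$'s untouched to guarantee the result still has high Scott rank, and relying on Lemma~\ref{le: computable bf} (not \ref{le: equivalent sums}) to certify $\a$-equivalence of the replacements with their originals.

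\textbf{Second, you implicitly assume the pieces can be replaced by bounded-rank $\a$-equivalents, and this may fail.} Your $\L_0$ is ``$\L$ reshuffled so the quotient is dense with no endpoint,'' which, as in Subsection~\ref{ss: VC LO}, requires replacing each $\B_q$ by an $\a$-equivalent linear ordering of controlled (bounded) Scott rank. But Subsection~\ref{ss: VC LO} secures the existence of such replacements by assuming $T$ has fewer than continuum many models — an assumption you cannot make here, since $\That$ may well have continuum many models. The paper's proof therefore proceeds by cases: if some $\A_i$ admits no $\a$-equivalent of Scott rank $\a$ (or $\a+\om$), it uses the type-omitting theorem to produce an $\Ahat_i$ realizing a $\Pic_{<\a}$-type not realized in $\L$, and a $\Sii_{\a+1}$ (resp.\ $\Sii_{\a+\om+1}$) sentence asserting the existence of such a tuple separates the models; only in the remaining case (Case 3) does it do the ``replace the $\A_i$ by bounded-rank equivalents'' move and use the $\Sii_{<\a+\om\cdot 2}$ formula ``$\exists x\ SR(\L_{<x})=\a+\om$.'' Your sketch skips this dichotomy entirely, and without the type-omitting branch the argument does not go through. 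You should also be careful that a formula ``there is a first $\sim$-class of small rank'' has to quantify over a $\sim$-relation at a fixed, bounded level; it is not obvious such a formula distinguishes your two models, and the paper's choice of formula (talking about initial segments of Scott rank exactly $\a+\om$) is cleaner in this respect and is what yields the explicit bound $\b=\a+\om\cdot 2$.
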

\begin{proof}
Consider again a $\Pii_\a$ sentence $T$ in the language $\{\leq\}$, which extends the theory of linear orderings, and which is effectively unbounded.
We will prove that there is a $\Pii_{\a...}$ sentence $\psi$ such that both $T\wedge \psi$ and $T\wedge\neg\psi$ are unbounded below $\om_1^{CK}$.
We may assume $T$ is given by a computably infinitary sentence, and, as above, assume $T$ has a computable model $\L$ of high Scott rank, as we can relativize the proof later.
Also, using that $\L/{\sim_{\om_1^{CK}}}$ is dense as above, we find a decomposition
\[\L = (1+\A_0+1+\A_1+1+\A_2+1+\cdots)+(\cdots +\B_2+\B_1+\B_0),
\]
were $\A_i$ and  $\B_i$ have Scott rank at least $\om_1^{CK}$ for all $i$.
Let us assume $\a$ is a limit ordinal; if not consider $\a+\om$ instead.
We will define a $\Pii_{\a+\om\cdot 2}$ sentence extending $T$, false about $\L$ and that is unbounded below $\om_1^{CK}$.
We consider three cases.

Case 1: Suppose that for some $i$, $\A_i$ is not $\a$-equivalent to any linear ordering of Scott rank $\a$.
We will now build $\Ahat\equiv_\a\A_i$ which satisfies some type which is not realized in $\L$.
By Lemma \ref{le: computable bf}, we have that if a structure $\Ahat_i\models \Pic_{<\a}$-theory$(\A)$, then $\Ahat_i\equiv_\a\A_i$.
We will define $\Ahat_i$ using the type-omitting theorem (see for instance \cite[Theorem III.3.8]{Bar75}) as a model of $\Pic_{<\a}$-theory$(\A)$ which omits the following countable list of non-principal types:
For each $\Pic_{<\a}$-type $\Phi(x_0,...,x_k)$ we will define a $\Pic_{<\a}$-type $\Phihat(x_1,...,x_{k-1})$ obtained by essentially forgetting about what happens to the left of $x_0$ and to the right of $x_k$.
In other words, given a $\Pic_{<\a}$-type $\Phi(x_0,...,x_k)$ realized in $\L$ by $a_0<a_1<\cdots<a_k$, we let  $\Phihat(x_1,...,x_{k-1})$ be the $\Pii_{<\a}$-type of $a_1,...,a_{k-1}$ within the linear ordering $(a_0,a_k)_\L$.
The list of types to omit consists of all the non-principal $\Pic_{<\a}$-types $\Phihat(x_1,...,x_{k-1})$ that come from a $\Pic_{<\a}$-type $\Phi(x_0, x_1,...,x_{k-1},x_k)$ realized in $\L$.
Let $\Lhat$ be defined by replacing $\A_i$ by $\Ahat_i$, and leaving the rest of $\L$ untouched.
Since the rest of $\L$ has Scott rank at least $\om_1^{CK}$, so does $\Lhat$.
By our assumption, $\Ahat_i$ does not have Scott rank $\a$, and hence there is some tuple $a_1<\cdots a_{k-1}\in\A_i$ satisfying some non-principal $\Pic_{<\a}$ type $\hat{\Gamma}$.
Let $\Gamma$ be the $\Pic_{<\a}$-type within $\Lhat$ of $a_0,a_1,...,a_k$ (where $a_0$ and $a_k$ are the 1's surrounding $\A_i$).
This type is not realized in $\L$ because it would have been omitted in $\Ahat_i$ otherwise.
The $\Sii_{\a+1}$ formula saying that there is a tuple in $\L$ satisfying $\Gamma$ is true in $\Lhat$ but not in $\L$.
Since $T$ and this formula are true in $\Lhat$, it is unbounded below $\om_1^{CK}$.

Case 2. Suppose now that, for some $i$, $\A_i$ is not $\a$-equivalent to any linear ordering of Scott rank $\a+\om$.
The proof is the same as above. The separating formula is now $\Sii_{\a+\om+1}$.

Case 3. None of the previous cases hold.
Let $\Lhat$ be built by replacing each $\A_i$ by an $\a$-equivalent linear ordering of Scott rank $\a$.
Let $\Ltil$ be built by replacing each $\A_i$ by an $\a$-equivalent linear ordering of Scott rank $\a+\om$.
Both have Scott rank at least $\om_1^{CK}$ because $\sum_{i\in\om^*}\B_i$ does.
The $\Sii_{<\a+\om\cdot 2}$ formula that says that there is some $x$ such that $SR(\L_{<x})=\a+\om$ is true in $\Ltil$ and not in $\Lhat$, both being models of $T$.
So, both, $T$, together with this formula and together with its negation, are both unbounded below $\om_1^{CK}$.
\end{proof}

\newcommand{\etalchar}[1]{$^{#1}$}

%
%
%\bibliography{bftypes}
%\bibliographystyle{alpha}
%

\end{document}